\DeclareMathAlphabet{\zap}{OT1}{pzc}{m}{it}
\newcommand{\rad}{\mbox{\bf \sc{r}}}
\def\CC{\mathbb C}
\DeclareMathOperator{\End}{End}
\def\ZZ{{\mathbb Z}}
\def\RR{{\mathbb R}}
\def\CP{{\mathbb C \mathbb P}}
\def\Dir{\not \! \partial}
\def\dir{\not \negthickspace D}
\DeclareMathOperator{\hull}{Hull}
\DeclareMathOperator{\Vol}{Vol}
\DeclareMathOperator{\Hom}{Hom}
\DeclareMathOperator{\Ind}{Ind}
\DeclareMathOperator{\kod}{Kod}
\DeclareMathOperator{\spanner}{span}
\def\barroman#1{\sbox0{#1}\dimen0=\dimexpr\wd0+1pt\relax
  \makebox[\dimen0]{\rlap{\vrule width\dimen0 height 0.06ex depth 0.06ex}%
    \rlap{\vrule width\dimen0 height\dimexpr\ht0+0.03ex\relax 
            depth\dimexpr-\ht0+0.09ex\relax}%
    \kern.5pt#1\kern.5pt}}
\begin{document}

\chapter[On the Scalar Curvature of $4$-Manifolds]{On the Scalar Curvature of $4$-Manifolds}

\author[Claude LeBrun]{Claude LeBrun}

\address{Department of Mathematics\\
Stony Brook  University\\
Stony Brook, NY 11794--3651\\
claude@math.stonybrook.edu}


\begin{abstract} Dimension four provides a  peculiarly idiosyncratic  setting for  the
interplay  between  scalar curvature and differential topology. Here we will  explain some of the
peculiarities  of the four-dimensional  realm  
via a careful    discussion of the Yamabe invariant  (or ``sigma constant''). In the process, we 
will also prove some new results, and point out  open   problems that continue to  represent 
 key   challenges   in the subject.  \end{abstract}

\bigskip

\body

Much of  modern Riemannian geometry concerns  the relationship  between 
 curvature  and 
differential  
topology. In keeping with the overall theme of this book, the present chapter  will  describe some of 
what this dialog has    taught  us about the {\sf scalar curvature}  of Riemannian manifolds. 
Our specific  objective   will be  to  explain how 
  the interplay between  scalar curvature and differential topology in dimension four 
hinges on phenomena that are  
quite  unlike anything seen  in other dimensions. 

Recall that  the {\em scalar curvature} $s: M\to \RR$ of a Riemannian $n$-manifold
 $(M,g)$ is by definition  the full trace  
 $${s}= {{\mathcal R}^{ij}}_{ij}=g^{jk}{{\mathcal R}^{i}}_{jik}$$
of the Riemann curvature tensor.  This has a nice synthetic interpretation  in terms of the volumes 
of the Riemannian distance balls $B_{\varepsilon} ({p})$ of small 
radius $\varepsilon$ about a point 
$p\in M$, because   $$\frac{{\mbox{vol}}_{g} (B_{\varepsilon} ({p}))}{v_n{\varepsilon}^n}= 
1-{\frac{{s}(p)}{6(n+2)}}{\varepsilon}^2 + O ({\varepsilon}^4), $$
where $$v_n=\frac{\pi^{n/2}}{(n/2)!} : = \frac{\pi^{n/2}}{\Gamma (\frac{n}{2}+1)}$$
 is the volume of the unit
ball in $\RR^n$. Thus, in a region where $s> 0$, a tiny ball
in the manifold 
has smaller  volume than it would in Euclidean space, while    in a region where  $s < 0$, tiny balls  instead have 
 larger volumes than they would in  Euclidean space. 
Still, because
the scalar curvature  does not similarly control the volume of  moderately {\sf  large} balls in dimension $n \geq 3$,
 this  synthetic interpretation   does not immediately allow one  to prove much of anything, and the search for a deeper synthetic 
 understanding of the scalar curvature therefore remains an active area of current research. Fortunately, however, the study of  
 geometric partial differential equations  provides an alternative source of   insights into  the global differential-topological meaning of 
 the scalar curvature, and this article will largely focus on results that have come  to light by dint of such means. 
  
One of the most surprising lessons these methods have taught us  is that  the global  behavior of the scalar curvature  in dimension 
four is wildly  different from what occurs in   other dimensions. 
As a simple  illustration, given a smooth compact manifold\footnote{In this article, the term  {\sf manifold} is  construed  in the restrictive sense, and so   should always be understood to mean a manifold {\em without boundary}.} $M^n$, 
 we  might ask how small we can make the $L^p$ norm of the scalar curvature, among   all possible  metrics
on $M$.  For most values of $p$, this is of course a silly question, because 
one can usually make the  $L^p$ norm of the scalar curvature tend to zero  by simply 
choosing a  suitable sequence   of constant rescalings $cg$ of  some  fixed   metric $g$; the 
 question is therefore only interesting when $p=n/2$, as this is 
  the unique value of $p$  for which  the  $L^p$-norm of the scalar curvature is actually scale-invariant. 
We are thus led to consider  the scale-invariant, non-negative Riemannian functional 
\begin{equation}
\label{funky}
\mathfrak{S} (g )  =   \int_M |s_g|^{n/2}d\mu_g  = \| s_g\|^{n/2}_{L^{n/2},g}
\end{equation}
along with   the associated 
real-valued diffeomorphism invariant  given by 
\begin{equation}
\label{dory}
\mathcal{I}_s(M^n)  =\inf_g \mathfrak{S} (g ) ,
\end{equation}
where the infimum is taken over all smooth Riemannian metrics $g$ on $M$.  
Of course,  the  $n=2$ case is highly  atypical, because the classical Gauss-Bonnet theorem 
tells us  that 
$\mathcal{I}_s(M^2)= 4\pi | \chi (M^2) |$, and that 
  the infimum is achieved by any metric $g$ whose Gauss curvature $K=s/2$ doesn't change  sign. 
We will therefore  restrict  our attention henceforth  to the setting of  $n\geq 3$, where the problem takes on an entirely different character.
In this range,  the functional $\mathfrak{S} (g )$   depends {differentiably} on $g$,  and the  critical points of  \eqref{funky}
 are 
precisely the {\em Einstein metrics}, which is to say   the  metrics of constant Ricci curvature, and  
the {\em scalar-flat metrics}, meaning the metrics with $s\equiv 0$. 
Although this  might make it seem tempting to look for Einstein metrics by trying to minimize the non-negative functional   $\mathfrak{S}$,  
 such hopes are   usually destined  to  be  disappointed. 
Indeed, in dimensions other than four, the differential-topological  invariant $\mathcal{I}_s$ defined by \eqref{dory} 
turns out to be {\sf trivial}, at least  for simply-connected manifolds: 

\begin{theorem} 
\label{tour}
Let $M$ be a smooth compact  manifold of  dimension 
$n\geq 3$. If $M$ is {\sf simply-connected}, and if $n\neq 4$, then $\mathcal{I}_s(M)=0$. 
\end{theorem}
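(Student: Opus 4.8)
The plan is to reroute the problem through the Yamabe invariant $\mathcal{Y}(M)$ of the abstract, via the identity $\mathcal{I}_s(M)=(\max\{0,-\mathcal{Y}(M)\})^{n/2}$, which turns the assertion $\mathcal{I}_s(M)=0$ into the inequality $\mathcal{Y}(M)\geq 0$. One direction of this identity is elementary: H\"older's inequality gives $\mathfrak{S}(g)^{2/n}=\|s_g\|_{L^{n/2}}\geq |\int_M s_g\,d\mu_g|/\mathrm{Vol}(g)^{(n-2)/n}$ for every $g$, and when $\mathcal{Y}(M)<0$ every conformal class has negative Yamabe constant, so passing to the constant-scalar-curvature representative of $[g]$ — which in this sign regime also minimizes $\mathfrak{S}$ over $[g]$ — yields $\mathfrak{S}(g)\geq|\mathcal{Y}(M)|^{n/2}$. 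The opposite direction, namely producing metrics with $\mathfrak{S}$ small, is easy when $\mathcal{Y}(M)=0$ (use Yamabe metrics of conformal classes whose Yamabe constants tend to $0$), and when $\mathcal{Y}(M)>0$ it reduces, via the surgery constructions below, to the collapsing metrics on $S^n$ that appear there.

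Granting this, the theorem becomes the assertion $\mathcal{Y}(M)\geq 0$. For $n=3$, Perelman's proof of the Poincar\'e conjecture forces $M\cong S^3$, and the round metric gives $\mathcal{Y}(S^3)>0$. The substance lies in $n\geq 5$, which I would handle in the manner of Petean. First, a model computation: $S^n$ is obtained from $S^{n-1}\times S^1$ by surgery on the circle $\{\mathrm{pt}\}\times S^1$, of codimension $n-1\geq 3$; since $S^{n-1}(R)\times S^1(\varepsilon)$ has $s\equiv(n-1)(n-2)/R^2$ and volume $O(R^{n-1}\varepsilon)$, it has $\mathfrak{S}=O(R^{-1}\varepsilon)\to 0$, and a Gromov--Lawson handle attachment — the glued-in handle being a round $(n-2)$-sphere of fixed small radius times a shrinking $2$-disk, the bending region permissible exactly because the codimension is $\geq 3$ — carries these metrics onto $S^n$ while keeping $\mathfrak{S}\to 0$, so $\mathcal{Y}(S^n)\geq 0$. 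For a general simply-connected $M^n$ with $n\geq 5$, bordism theory says $M$ is cobordant (orientedly, or as a spin manifold) to an explicit disjoint union $M_0$ of standard generators — products of projective spaces, Bott manifolds, and the like — each summand of which either carries positive scalar curvature or, like $S^n$, admits collapsing metrics with scalar curvature bounded and volume vanishing, so that $\mathcal{Y}(M_0)\geq 0$; moreover, $M$ and $M_0$ being simply connected of equal dimension $\geq 5$, a handle-trading argument realizes the cobordism through surgeries of codimension $\geq 3$ alone (opening with $0$-surgeries to connect the components). The surgery inequality $\mathcal{Y}(N)\geq\min\{\mathcal{Y}(M'),\Lambda_n\}$, with a universal $\Lambda_n>0$ whenever $N$ arises from $M'$ by such a surgery, then propagates $\mathcal{Y}\geq 0$ from $M_0$ to $M$.

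The real weight sits in two places. Topologically, one must pin down the generators of $\Omega^{SO}_\ast$ and $\Omega^{Spin}_\ast$, absorb their $2$-torsion, and rearrange the cobordism so that no surgery of codimension $1$ or $2$ ever occurs — this is exactly where simple connectivity of $M$ and the bound $n\geq 5$ are indispensable, and exactly the feature with no counterpart when $n=4$. Analytically, the crux is the surgery inequality for $\mathcal{Y}$ — equivalently, that a codimension-$\geq 3$ surgery cannot destroy minimizing sequences for $\mathfrak{S}$ that tend to $0$: one excises a tube about the surgery sphere from a near-optimal metric and inserts the standard handle, and the codimension bound is precisely what lets the transition region go in with scalar curvature bounded below by a fixed constant and volume tending to $0$. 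By comparison, the H\"older bound, the solution of the Yamabe problem, and the bordism bookkeeping are routine.
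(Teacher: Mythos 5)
Your overall architecture matches the paper's: reduce $\mathcal{I}_s(M)=0$ to $\mathscr{Y}(M)\geq 0$ via the identity of equation \eqref{consign}, then dispose of $n=3$ by Perelman and $n\geq 5$ by Petean's bordism-and-surgery argument. The $n\geq 5$ discussion is in good shape (though note that the surgery inequality with a uniform $\Lambda_n>0$ is the later theorem of Ammann--Dahl--Humbert; Petean's original and weaker result --- that for every $\varepsilon>0$ the condition $\mathscr{Y}>-\varepsilon$ is preserved by codimension-$\geq 3$ surgery --- already suffices here and is all that the theorem actually requires).

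The genuine gap is in your proof of the identity itself, in the case $\mathscr{Y}(M)>0$. You write that producing metrics with small $\mathfrak{S}$ ``reduces, via the surgery constructions below, to the collapsing metrics on $S^n$.'' This is circular: the surgery machinery is what you are trying to feed into the identity, not a proof of it, and Petean's estimate controls the Yamabe constants $Y(M,[g])$ of conformal classes rather than the functional $\mathfrak{S}(g)=\int|s_g|^{n/2}d\mu_g$ of individual metrics. The correct and much cleaner argument, which the paper gives, runs as follows: the normalized Einstein--Hilbert functional $\mathscr{E}$ is unbounded below on every compact $n$-manifold, $n\geq 3$ (one sees this explicitly by inserting a highly oscillatory volume-preserving perturbation $g_f$ of the flat metric into any small ball), so there is always a conformal class $[h_-]$ with $Y(M,[h_-])<0$. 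Since $\mathscr{Y}(M)>0$, there is also a conformal class $[h_+]$ with $Y(M,[h_+])>0$. The Yamabe constant $Y(M,[g])$ is a continuous function of $g$ in the $C^2$ topology, and the space of metrics is connected, so the intermediate value theorem produces a conformal class $[h_0]$ with $Y(M,[h_0])=0$, whose Yamabe minimizer is a scalar-flat metric. That metric has $\mathfrak{S}=0$, so $\mathcal{I}_s(M)=0$ outright. This settles the $\mathscr{Y}(M)>0$ case of the identity without any reference to surgery, and is what you should substitute for your appeal to handle attachments.
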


\noindent 
By contrast, however, the invariant $\mathcal{I}_s$   is highly non-trivial in dimension $4$:

\begin{theorem} 
\label{option} 
There are sequences
 $\{ M_j\}$ of  smooth compact {\sf simply connected} $4$-manifolds
 with $\lim_{j\to \infty}\mathcal{I}_s(M_j)=+ \infty$. Moreover, the
 manifolds $M_j$ may be chosen so  that,  for each $j$,   the infimum
 \eqref{dory} 
 is  achieved by an Einstein metric $g_j$ on $M_j$. 
 \end{theorem}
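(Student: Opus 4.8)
The plan is to produce the $M_j$ as minimal complex surfaces of general type whose first Chern number tends to infinity, and then to pit the existence of a Kähler--Einstein metric against a Seiberg--Witten lower bound for the functional $\mathfrak{S}$. The mechanism rests on the fact that, in dimension four, \eqref{funky} is nothing but $\mathfrak{S}(g)=\int_M s_g^2\,d\mu_g$, and that this scale-invariant quantity is bounded below, over \emph{all} metrics on $M$, as soon as $M$ carries a nontrivial Seiberg--Witten monopole class.

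For the $M_j$ I would take smooth complex hypersurfaces $X_{d}\subset\CP^{3}$ of degree $d=d_j:=j+4$. The Lefschetz hyperplane theorem shows each $X_d$ is simply connected; adjunction gives $K_{X_d}=\mathcal{O}(d-4)|_{X_d}$, which is ample whenever $d\geq 5$, so $X_d$ is then a \emph{minimal} surface of general type with $c_1^{2}(X_d)=(d-4)^{2}d\to\infty$, and with $b^{+}(X_d)=2p_g(X_d)+1>1$ since $p_g(X_d)=(d-1)(d-2)(d-3)/6\geq 4$. Because $c_1(X_d)<0$, the Aubin--Yau theorem provides a Kähler--Einstein metric $g_j$ on $M_j=X_{d_j}$, of constant negative scalar curvature; and since $g_j$ is both Kähler and Einstein on a complex surface, the combination of Gauss--Bonnet, the signature formula, the vanishing of its trace-free Ricci tensor, and the identity $|W^{+}|^{2}=s^{2}/24$ valid for Kähler metrics evaluates $\mathfrak{S}(g_j)$ exactly: $\int_{M_j}s_{g_j}^{2}\,d\mu_{g_j}=32\pi^{2}c_1^{2}(M_j)$.

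The crucial input is the matching lower bound. Since $M_j$ is a minimal surface of general type with $b^{+}>1$, Witten's computation shows the canonical $\mathrm{spin}^{c}$ structure has a nonzero Seiberg--Witten invariant, so $c_1(M_j)$ is a monopole class. Feeding an irreducible solution of the monopole equations into the Weitzenböck formula for the Dirac operator and integrating yields, for \emph{every} Riemannian metric $g$ on $M_j$,
$$\int_{M_j}s_g^{2}\,d\mu_g\;\geq\;32\pi^{2}\,(c_1^{+})^{2}\;\geq\;32\pi^{2}\,c_1^{2}(M_j),$$
the second inequality holding because the $g$-anti-self-dual part of any class has non-positive self-intersection while $c_1^{2}(M_j)>0$. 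Hence $\mathcal{I}_s(M_j)\geq 32\pi^{2}c_1^{2}(M_j)\to\infty$, which is the first assertion; and comparing with the previous paragraph, the infimum \eqref{dory} equals $32\pi^{2}c_1^{2}(M_j)$ and is realized by the Einstein metric $g_j$, which is the second.

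The main obstacle is the Seiberg--Witten estimate itself: the passage from the pointwise monopole Weitzenböck identity to an inequality on $\int s_g^{2}$ that is uniform over all conformal classes. I would either quote this curvature estimate from the literature or rebuild it, solving the (suitably perturbed) monopole equations for the canonical $\mathrm{spin}^{c}$ structure of a given $g$, using the Weitzenböck formula to control $\|F_A^{+}\|_{L^2}$ in terms of the negative part of $s_g$, and then identifying $\|F_A^{+}\|_{L^2}^{2}$ with $4\pi^{2}(c_1^{+})^{2}$ via Chern--Weil and a limiting argument; the one delicate point, for which the hypothesis $b^{+}>1$ is essential, is ruling out (or perturbing away) reducible solutions so that an irreducible monopole genuinely exists.
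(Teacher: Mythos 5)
Your proposal is correct and follows essentially the same route as the paper: it uses the same family of degree-$\ell$ hypersurfaces in $\CP_3$ for $\ell\geq 5$, the same chain of Lefschetz, adjunction, and Aubin--Yau, and the same Seiberg--Witten Weitzenb\"ock lower bound (Proposition \ref{best} in the paper) to pin $\mathcal{I}_s(M_j)=32\pi^2 c_1^2(M_j)$ with the K\"ahler--Einstein metric as minimizer. The only cosmetic deviation is that you evaluate $\mathfrak{S}(g_j)$ via Gauss--Bonnet, the signature formula, and $|W^+|^2=s^2/24$, whereas the paper reads it off directly from the relation $\rho=\frac{s}{4}\omega$ between the Ricci and K\"ahler forms.
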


While this pair of  results suffices to  show that    dimension four is 
{\sf sui generis} when it comes to  questions about  the scalar curvature, we will soon see that 
the invariant $\mathcal{I}_s(M)$ is merely  a pale shadow of the {\em Yamabe invariant} $\mathscr{Y}(M)$, 
which requires more work  to define, but which often encodes  significantly more geometrical information
about a manifold. After defining $\mathscr{Y}(M)$ in \S \ref{yammer} below, and  explaining how it related 
to $\mathcal{I}_s(M)$, we will immediately see why Theorem \ref{tour}  is a direct consequence of  
results due to Gromov, Lawson, Stolz, Petean, and Perelman. By contrast, as will be 
explained  in \S \ref{yneg}, Theorem \ref{option}  is proved by means of 
  Seiberg-Witten theory, 
 and fits into a larger story  about $4$-manifolds with negative Yamabe invariant. Dimension four   is also the arena  for  
  unusual   results about   Yamabe invariants in the zero and positive cases, and the rather different methods used to prove such results 
 are  explored and explained   in \S \ref{yzero} and  \S\ref{ypos}, respectively.  In the process, we will encounter 
 some of the unsolved mysteries  of  the subject. One can only hope that   drawing attention to these questions 
will stimulate new activity  along     this beautiful   interface between  Riemannian geometry and differential topology.

\section{Yamabe Constants and Yamabe Invariants}
\label{yammer} 

In this section, we describe the {\sf Yamabe invariant} $\mathscr{Y}(M) \in \RR$ of a smooth compact $n$-manifold $M$, $n\geq 3$, 
and discuss some of its basic properties. This differential-topological  invariant quantitatively refines the question of  whether 
a given manifold admits positive-scalar-curvature metrics, because 
\begin{equation}
\label{positivity}
\mathscr{Y}(M) > 0  \quad \Longleftrightarrow  \quad M \mbox{ admits metrics with } s>0.
\end{equation}
On the other hand, we will also see that the Yamabe invariant also refines  the invariant defined by  
\eqref{dory}, because
\begin{equation}
\label{consign}
\mathcal{I}_s (M) 
=
\begin{cases}
    \quad 0 & \text{if } \mathscr{Y}(M)\geq 0, \\
      |\mathscr{Y}(M)|^{n/2}& \text{if } \mathscr{Y}(M)\leq 0.
\end{cases}
\end{equation}

The Yamabe invariant is a natural outgrowth of the study of the {\em total scalar curvature}, by which we mean  
the 
 {integral} of the scalar curvature with respect to the Riemannian volume measure. In dimension $2$, where the scalar curvature coincides with twice the Gauss curvature, the total scalar curvature just  computes the Euler characteristic,  
because the classical Gauss-Bonnet theorem tells us that 
$$\int_{M^2} s_g ~d\mu_g = 4\pi \chi (M^2)$$
for any Riemannian metric $g$ on a compact surface $M^2$. By contrast, when $n\geq  3$, the total scalar curvature is  ridiculously far from
being a topological invariant, because it is not even {\sf scale}  invariant;  instead, under constant rescaling $g\rightsquigarrow cg$ of  the metric, 
the total scalar curvature rescales by a factor of $c^{(n-2)/2}$. The standard way of remedying this last pathology is to divide by 
an appropriate power of the volume, which then results in  the so-called {\em normalized Einstein-Hilbert action}
\begin{equation}
\label{nehf}
\mathscr{E} (M^n,  g ) = \frac{\int_M s_g~d\mu_g}{(\int_M d\mu_g)^{1-\frac{2}{n}}}.
\end{equation}
However, when $n \geq 3$, this action $\mathscr{E}$  still depends quite sensitively on the metric. Indeed, 
it turns out to   neither be bounded above nor below, and its critical points 
turn out to  exactly be the  {\em Einstein metrics}  \cite[Theorem 4.21]{bes}.  

Although $\mathscr{E}$ has no lower bound  when $n \geq 3$, 
Yamabe nonetheless  discovered that its {\sf restriction} to any {\sf conformal class}
of metrics {\sf is}  always bounded below. To see this, we  set  $p= \frac{2n}{(n-2)}> 2$, which  is the unique real number so that 
a metric $\widehat{g}$ conformally related to $g$ and its  corresponding metric volume measure can  
be simultaneously expressed as 
$$\widehat{g}= u^{p-2}g \quad \mbox{and} \quad \widehat{d\mu} = u^p \,d\mu.$$
 With this choice, 
Yamabe then discovered that 
the scalar curvature $\widehat{s}$ of the rescaled metric $\widehat{g}$  is given by the remarkably simple equation 
$$\widehat{s} u^{p-1} = \left[(p+2) \Delta + s\right] u,$$
where $\Delta = d^*d = -\nabla\cdot  \nabla$. Given this, it then follows that 
\begin{equation}
\label{restricted}
\mathscr{E} (\widehat{g}) = \frac{\int_m\left[ (p+2) |\nabla u|^2 + s u^2 \right]\, d\mu}{\| u\|_{L^p}^2}
\end{equation}
and, by focusing on  functions of the form $u=1+tf$, where $t\in \RR$ and $\int_M f~d\mu =0$,  
one  therefore easily  sees that $g$ is a critical point  of  $\mathscr{E}|_{[g]}$ iff its scalar curvature 
${s}$ is constant; and since we could have just as easily chosen any other metric
in the conformal class as our reference metric,  it  now immediately  follows  that $\widehat{g}\in [g ]:= \{ u^{p-2}g \}$ 
is a critical point of $\mathscr{E}|_{[g]}$ iff $\widehat{s}$ is constant. 
In light of this, Yamabe therefore  set out to prove that each conformal class $\gamma= [g]$ contains a metric 
of constant scalar curvature by showing that minimizers of $\mathscr{E}|_\gamma$ always exist.  This claim   is actually 
 correct, and we therefore  now call 
 such  minimizers    {\sf Yamabe metrics}. Moreover,  Yamabe's  strategy 
 for proving their existence  was largely on target. Namely, he proved the existence of  a unit-norm 
 minimizing functions $u_\epsilon > 0$  for the modifications of  \eqref{restricted} obtained  by  replacing  $p$ with $p-\epsilon$, and  then asserted  that  these 
 $u_\epsilon$ converge  to a smooth positive function $u$ 
 as $\epsilon\searrow 0$.  However, while Yamabe's posthumously-published proof \cite{yamabe} 
 hinged   on the claim that the $u_\epsilon$ are uniformly bounded in $C^0$, 
 Neil Trudinger \cite{trud}  observed   that this 
  fails for metrics  in the  conformal class of   the usual    sectional-curvature $(+1)$ metric $g_0$ on $S^n$. Fortunately, 
 Trudinger  went on to  show  that Yamabe's outline does indeed work 
whenever  the {\sf Yamabe constant}
 $$
 Y(M, [g]) := \inf_{\widehat{g}\in [g]} \mathscr{E} (\widehat{g})
 $$
is non-positive, and then laid out a general approach to the  more difficult
 positive case, based  on  trying to show that the $u_\epsilon$  instead converge in a suitable Sobolev space. 
 By carefully analyzing the differential-geometric meaning of the best Sobolev  constant for $L^2_1\hookrightarrow L^p$, 
 Aubin \cite{aubyam}  then discovered  that Trudinger's method  actually works whenever  
 $$
  Y(M^n, [g]) < \mathscr{E}( S^n, g_0),
 $$
while  also observing  (see Figure \ref{balloon})   that one always at least has 
 \begin{equation}
\label{aubineq}
  Y(M^n, [g]) \leq \mathscr{E}( S^n, g_0)
\end{equation}
  for any compact Riemannian manifold $(M^n, g)$, $n\geq 3$. Having thereby reduced Yamabe's problem to that of showing 
  the equality case in \eqref{aubineq} only occurs when $(M^n, [g])$ is the standard $n$-sphere, Aubin  went on to prove that this 
is automatically 
  true  except    perhaps when $n\leq 5$  or  $[g]$ is locally conformally flat.  Schoen \cite{rick} then completed the proof of Yamabe's  claim 
  by using the Schoen-Yau positive 
  mass theorem    to eliminate  all the remaining cases.

 \begin{figure}[htb]
 \centerline{
 \includegraphics[scale=.9]{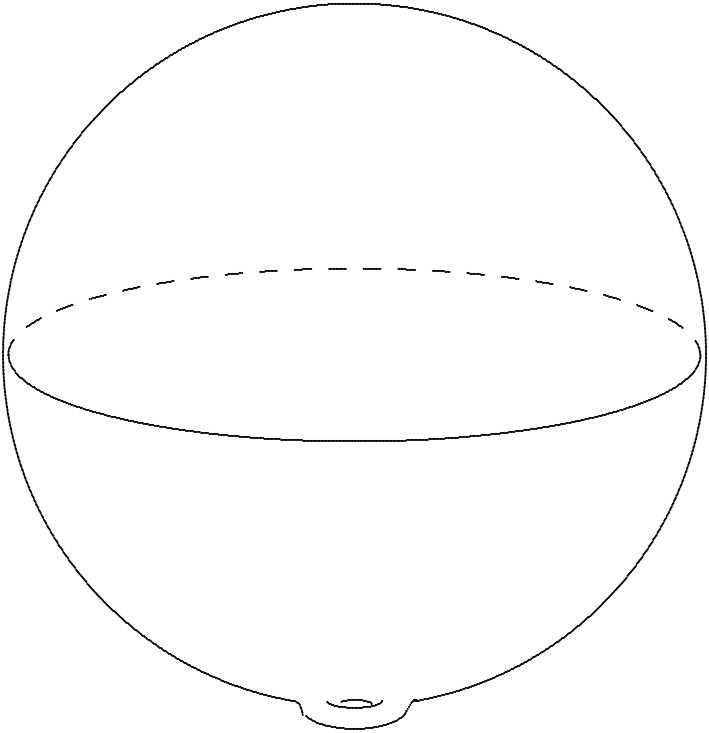}
 }
\caption{\label{balloon}
Because any Riemannian metric is nearly Euclidean on a sufficiently tiny scale, any compact Riemannian manifold has conformal rescalings that resemble a large, nearly spherical balloon with a tiny, topology-laden gondola attached. In any dimension $n\geq 3$, this means  that, for any $g$ and any $\epsilon > 0$, one can always construct conformal rescalings $\widehat{g} = u^{p-2}g$ with $\mathscr{E} (M^n, \widehat{g}) < \mathscr{E} (S^n, g_0)+ \epsilon$.}
\end{figure}

An oral tradition alleges that Yamabe's ultimate goal was to construct Einstein metrics by next {\sf maximizing}  $Y(M^n, \gamma)$ over the set of conformal classes
$\gamma$, as illustrated in Figure \ref{saddle}. While this program is now definitely known to  fail on many low-dimensional manifolds, Yamabe's dream  does at least 
give rise to a fascinating differential-topological invariant. Indeed,  the {\sf Yamabe invariant} of any smooth compact $n$-manifold $M$, $n\geq 3$, is defined as 
\begin{equation}
\label{yamabdef}
\mathscr{Y}(M) = \sup_\gamma  Y(M, \gamma) = \sup_\gamma \inf_{g\in \gamma} \mathscr{E}( M, g),
\end{equation}
where the supremum is taken over all conformal classes $\gamma = [g]$ of smooth Riemannian metrics on $M$,
and so automatically  satisfies
\begin{equation}
\label{ceiling}
  \mathscr{Y}(M^n) \leq \mathscr{E}( S^n, g_0) = \mathscr{Y}(S^n) 
\end{equation}
by \eqref{aubineq}.
This invariant was apparently   introduced independently  by Schoen \cite{sch} and Kobayashi \cite{okob}, 
who respectively called $\mathscr{Y}(M)$ the {\em sigma constant} 
and the {\em mu invariant} of $M$. While  both of these alternative  terminologies continue to have  proponents, I personally feel   that 
it is usually preferable to name  objects after mathematicians  rather than  after commonly-used Greek letters. 
  Nonetheless, the terminology adopted here still requires a bit of disambiguation, because one must be 
 careful not to confuse   Yamabe {\sf invariants} (of smooth  manifolds) with Yamabe {\sf constants} (of  specific conformal classes).


 \begin{figure}[htb]
  \centerline{
 \includegraphics[scale=.9]{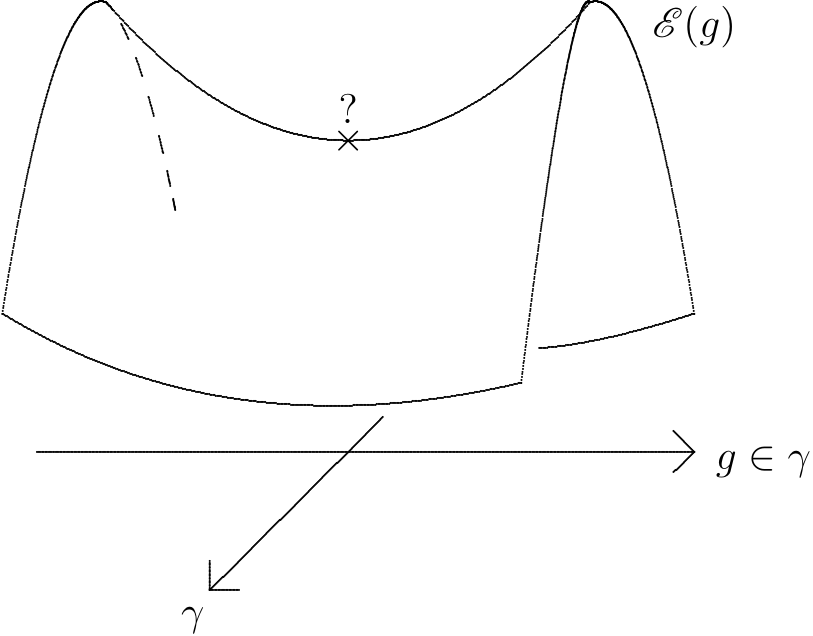}
 }
\caption{\label{saddle}
Yamabe apparently dreamt of finding  Einstein metrics on  compact $n$-manifolds, $n \geq 3$, by minimizing $\mathscr{E}$ over
each conformal class, and then maximizing over conformal classes. While a direct implementation of this   scheme is usually  
 destined to fail,  this idea nonetheless allows us to attach an important  real number, called the {\sl Yamabe invariant}, to every smooth 
compact manifold.}
\end{figure}

While the normalized Einstein-Hilbert functional $\mathscr{E}$ is  technically simpler than  the functional $\mathfrak{S}$ 
of \eqref{funky},  the two are actually closely related. Indeed, applying the H\"older inequality to \eqref{nehf} immediately yields  
\begin{equation}
\label{uptick}
 [\mathfrak{S}(g)]^{2/n}\geq \mathscr{E}(g)
\end{equation}
for any metric $g$, with equality iff $s_g = \mbox{const} \geq 0$. On the other hand, if $g$ and $\widehat{g}=u^{p-2}g$ 
are any two conformally related metrics, applying the same H\"older inequality to \eqref{restricted}, after first suppressing  the $|\nabla u|^2$ term, 
implies that
\begin{equation}
\label{downtick}
\mathscr{E}(\widehat{g}) \geq - [\mathfrak{S}({g})]^{2/n}
\end{equation}
with equality iff $u = \mbox{const}$ and $s_g=  \mbox{const} \leq 0$. Combining  \eqref{uptick} and \eqref{downtick},
we therefore conclude that 
any 
Yamabe metric $g$ can alternatively  be characterized as a minimizer of $\mathfrak{S}$  in its 
conformal class $\gamma = [g]$.

Notice that the equality case of \eqref{downtick}  not only  implies that any 
 metric $g$ with $s_g=  \mbox{const} \leq 0$ is  a Yamabe metric, 
but  moreover shows that, up to constant rescaling, such a $g$  
is actually the {\sf unique} Yamabe metric in its conformal class. By contrast, however, a metric $g$ with 
$s_g=  \mbox{const} > 0$ may often {\sf not} be a Yamabe metric, and this  nuisance  can in practice make it 
quite difficult
to calculate the exact value of $\mathscr{Y}(M)$ in the positive case. One of the few  tools available to help 
address this issue    is a result of Obata  \cite{oba} which 
implies  that any {\sf Einstein} metric $g$ is  a Yamabe metric. Indeed, Obata's theorem  says that, 
except in the case of $(S^n, [g_0])$, 
any Einstein metric $g$  is, modulo constant rescalings,  the {\sf only} constant-scalar-curvature metric, 
and hence  the {\sf unique} Yamabe metric,  in  its conformal class.

Let us now prove  assertions \eqref{positivity} and \eqref{consign}. 
To show   \eqref{positivity}, first notice that 
the functional \eqref{restricted} is manifestly positive on the conformal class $[g] = \{ u^{p-2} g\}$  of any metric $g$ with $s_g > 0$.
The existence of Yamabe minimizers
therefore tells us, in particular, that   $Y(M, [g])> 0$ iff and only if $[g]$ contains a metric 
$\widehat{g}$ with $s_{\widehat{g}}> 0$. It thus follows that   $\mathscr{Y}(M) > 0$ 
iff $M$ carries a metric $g$ of positive scalar curvature, which is exactly 
statement \eqref{positivity}. 

To prove \eqref{consign}, it is helpful to first recall  the standard fact that $\mathscr{E}$ is  unbounded below
on any compact $n$-manifold $M$, $n\geq 3$.  A particularly simple proof of this fact goes as follows. 
 Let  $f: \RR^n\to \RR$ be  any smooth function that is supported in the ball of radius $\nicefrac{1}{2}$,  and then observe that 
the metric
$$g_f := e^f (dx^1)^2+ e^{-f} (dx^2)^2 + (dx^3)^2+\cdots + (dx^n)^2$$
has the same volume form as the Euclidean metric, but 
satisfies\footnote{For example, one can prove  \eqref{shazzam} by using
   O'Neill's formulas   to show that    $s_{g_f}$ is everywhere equal to   $-\frac{1}{2}|df^\perp|^2$ plus  the  scalar curvature of the relevant fiber of the Riemannian submersion 
    $(x^1, x^2 , x^3, \cdots , x^n) \mapsto (x^3, \cdots , x^n)$. Indeed, \cite[equation (9.37)]{bes}  simplifies dramatically in our case, because the fibers of the submersion are minimal and orthogonal to a flat transverse foliation. Using   classical Gauss-Bonnet to integrate out the  scalar curvature of the fibers then proves \eqref{shazzam}. 
    
 Two other proofs involve viewing the restriction of   $g_f$ to $[-\frac{1}{2}, \frac{1}{2}]^n$
           as defining  a smooth metric $\breve{g}_f$ on the $n$-torus
    $\RR^n/\ZZ^n$. For example, after multiplying by  $\RR/\ZZ$ if necessary, one obtains an almost-K\"ahler 
    metric on  an even-dimensional torus $\mathbb{T}^{2m}$ that is adapted to the standard symplectic form 
    $\omega = dx^1 \wedge dx^2 +\cdots +dx^{2m-1}\wedge dx^{2m}$, and 
     Blair's formula \cite{blair} for the total Hermitian scalar curvature
    then yields  \cite[Appendix]{lebdelpezzo} 
    a conceptual    proof of \eqref{shazzam}.  Alternatively, one may instead  apply the second variation   formula  
    \eqref{jimmy} inductively to the minimal hypersurfaces $\mathbb{T}^m\subset \mathbb{T}^{m+1}$, $m=2, \ldots , n-1$, for the variation $u\equiv 1$ with $\mathscr{A}^{\prime\prime}=0$,
 while integrating out the auxiliary variables at each stage of the induction.} 
\begin{equation}
\label{shazzam} 
\int_{\RR^n} s_{g_f}d\mu_{g_f} =-\frac{1}{2} \int_{\RR^n} |d^\perp f|^2 d\mu_{g_f},
\end{equation}
where $d^\perp f := \sum_{j=3}^n \frac{\partial f}{\partial x^j}dx^j$. 
Thus, if we start with an otherwise arbitrary metric $h$ on $M^n$ that contains a Euclidean unit ball, and then replace $h$ 
on  this  ball by $g_f$,  the total volume will remain unchanged, while  the total scalar curvature will be reduced by 
an arbitrarily  large amount 
provided  we take  $f$ to be sufficiently  oscillatory. Since $\mathscr{E}(g) \geq Y(M,[g])$ for any metric $g$, 
this 
immediately implies that any given  $M^n$, $n\geq 3$,  admits sequences of conformal classes $[g_j]$ with $Y(M, [g_j])\to -\infty$.

If  $\mathscr{Y}(M) > 0$, it therefore follows that $M$ admits  a pair of conformal classes $[h_+]$ and $[h_-]$ such that 
$Y(M, [h_+]) > 0$ and $Y(M, [h_-]) < 0$. On the other hand it is not difficult to show,   by direct inspection of \eqref{restricted}, 
 that $Y(M, [g])$ is a continuous function of
$g$ in the $C^2$ topology  \cite{bes}. Since the space of metrics is  connected, the intermediate value theorem 
 therefore implies  the existence of  conformal classes  $[h_0]$ on $M$ with $Y(M, [h_0]) = 0$, and hence 
of Yamabe  metrics $\widehat{h}_0$ on $M$ with $s\equiv 0$. Hence  $\mathcal{I}_s(M)=0$ whenever  $\mathscr{Y}(M)> 0$. On the other hand, 
   \eqref{downtick} implies that $\inf_{g\in \gamma} \mathfrak{S}(g) = [-Y(M,\gamma)]^{n/2}$ whenever 
$Y(M,\gamma)\leq 0$. Assertion \eqref{consign} now follows from these two observations.

Since we have  now
  observed  that a smooth compact manifold $M$ of dimension $n\geq 3$ has $\mathscr{Y}(M) > 0$ iff $M$ admits some metric $g$ 
of positive scalar curvature, let us  next  recall that 
not every such manifold $M$ has this property. 
Indeed, the first obstruction to the existence of positive-scalar-curvature metrics  on compact manifolds was 
discovered by Lichnerowicz \cite{lic}, who observed  that the Dirac operator $\Dir : \Gamma (\mathbb{S})\to \Gamma (\mathbb{S})$
on  a Riemannian spin manifold satisfies the so-called Weitzenb\"ock formula 
\begin{equation}
\label{lwf} 
{\Dir }^2  = \nabla^* \nabla + \frac{s}{4},
\end{equation}
and therefore has trivial kernel and co-kernel if the scalar curvature $s$ is everywhere positive. However, when 
$n\equiv 0 \bmod 4$, the full spinor bundle decomposes as a Whitney sum $\mathbb{S}= \mathbb{S}_+\oplus \mathbb{S}_-$ of
the so-called chiral  spinor bundles, and the Dirac operator correspondingly decomposes as 
$\Dir= \dir \oplus {\dir}^{\ast}$, where the chiral Dirac operator 
 $$\dir : \Gamma (\mathbb{S}_+)\to \Gamma (\mathbb{S}_-)$$ is an elliptic operator 
whose index $\widehat{A}(M)$ had previously been shown   by Atiyah and Singer \cite{index-bams} 
to be a 
specific   linear combination 
of Pontryagin numbers, and thus   in particular a cobordism invariant.  This allowed Lichnerowicz to 
 prove   that a smooth compact spin-manifold $M^{4k}$ cannot  admit  metrics of positive scalar curvature if
$\widehat{A}(M)\neq 0$. In his thesis, Hitchin \cite{hitharm} then  generalized Lichnerowicz's result 
by noticing that \eqref{lwf} also gives an obstruction to the existence of positive-scalar-curvature metrics on spin 
manifolds of dimension $n\equiv 1$ or $2\bmod 8$. Indeed, in these dimensions 
 there is,  for each spin structure on a smooth compact  manifold, a $\ZZ_2$-valued   invariant  $\alpha$   given by 
$\dim \ker {\not \!\partial} \bmod 2$ when $n\equiv 1 \bmod 8$,   or by $\dim \ker \dir \bmod 2$ when $n\equiv 2 \bmod 8$. 
Since this element of $\ZZ_2$ is independent of the choice of a Riemannian metric $g$ on $M$,
Hitchin  was therefore able to prove  that a necessary condition for the existence of
a positive-scalar-curvature metric is that $\alpha$ must vanish for every spin structure.
When $M$ is simply connected, it can then have at most one spin structure,  so this 
discussion then only involves  an invariant $\alpha (M) \in \ZZ_2$ of any smooth compact simply connected 
manifold of dimension $n\equiv 1$ or $2\bmod 8$ with $w_2=0$. 
To keep the notation  as simple as possible,  one  extends the definition of $\alpha (M)$ 
to  smooth compact spin manifolds $M$ of other dimensions $n$ by setting 
$\alpha (M^n) := \widehat{A}(M)\in \ZZ$ if $n\equiv 0\bmod 4$ and $\alpha (M^n) := 0$ if $n\equiv 3, 5, 6$ or $7\bmod 8$. 
Hitchin's generalization of Lichnerowicz's theorem then tells us  that a simply connected  spin manifold $M$ cannot admit a metric
of positive scalar curvature if $\alpha (M) \neq 0$. Remarkably,  $\alpha(M)$ is actually invariant under spin cobordisms, 
and so only depends on  the spin-cobordism 
class $[M]\in \Omega_n^{\rm Spin}$ of the  $n$-manifold $M$.

The role of spin cobordism in this story means that the obstruction $\alpha (M)$ is invariant under elementary surgeries 
in a suitable   range of dimensions. 
Conversely, 
  Gromov-Lawson \cite{gvln}  and Schoen-Yau \cite{syrger}  independently proved
   that the existence of a positive-scalar-curvature metric  on $M$ is invariant
 under elementary surgeries in codimension $\geq 3$.
Using this, Gromov and Lawson then went on 
  to show that every compact  simply-connected {\em non-spin} manifold of dimension $\geq 5$ admits metrics
of positive scalar curvature, just by showing that every such manifold is obtained by  a sequence of such surgeries
on   products and disjoint unions of specific positive-scalar-curvature generators of the oriented cobordism ring $ \Omega^{\rm SO}$. 
For simply connected spin manifolds, they also conjectured that Hitchin's obstruction $\alpha : \Omega_n^{\rm Spin}\to \mathsf{KO}^{-n}(\mbox{pt})$ was the {\sf only} obstruction to the existence of positive-scalar curvature metrics, and observed that this would
follow from their surgery result  if one could show that $\ker \alpha \subset \Omega^{\rm Spin}$ was generated by spin manifolds of positive scalar curvature.  Stolz \cite{stolz} then proved  this conjecture by  showing that every cobordism class in 
$\ker \alpha$ can actually be represented  by the 
total space of an  $\mathbb{HP}_2$-bundle over spin a manifold. 
Consequently, 
  every  simply connected $n$-manifold $M$, $n \geq 5$, 
   satisfies exactly one of the following:
   \begin{itemize}
   \item either $\mathscr{Y}(M) > 0$; or else 
   \item $M$ is  a spin manifold with 
$\alpha (M) \neq 0$. 
   \end{itemize}

It was Petean \cite{jp3} who  first seriously considered the results of Gromov, Lawson, and Stolz from the perspective  of the Yamabe invariant, and  realized that more could be proved in this context. First, Petean showed that the Gromov-Lawson surgery arguments 
also imply  that, for any $\varepsilon > 0$,  
the condition $\mathscr{Y}(M) > -\varepsilon$ is preserved under elementary surgeries in codimension $\geq 3$. Second, 
Petean discovered that adjoining  a well-chosen collection of Ricci-flat manifolds of special holonomy to 
Stolz's $\mathbb{HP}_2$-bundles  shows that the
spin-cobordism ring $\Omega^{\rm Spin}$ is generated by manifolds with non-negative Yamabe invariant. 
Putting these facts together, he  obtained 

\begin{theorem}[Petean] 
\label{petproject} 
Any  compact simply-connected $n$-manifold $M$, $n\geq 5$, has Yamabe invariant $\mathscr{Y}(M) \geq 0$. 
Moreover, such a manifold has $\mathscr{Y}(M)=0$ iff $M$ is a spin manifold with $\alpha (M) \neq 0$. 
\end{theorem}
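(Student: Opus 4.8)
The plan is to deduce the theorem from the two results of Petean recalled above: the $\varepsilon$-refinement of Gromov--Lawson surgery, which says that for every $\varepsilon>0$ the class of manifolds with $\mathscr{Y}>-\varepsilon$ is closed under surgeries in codimension $\geq 3$, and Petean's observation that the ring $\Omega^{\rm Spin}$ is generated by simply-connected manifolds admitting metrics with $s\geq 0$ --- namely Stolz's $\mathbb{HP}_2$-bundles, which carry positive-scalar-curvature metrics, together with simply-connected Ricci-flat manifolds of special holonomy (K3 surfaces, Calabi--Yau manifolds, and Joyce's $G_2$- and $\mathrm{Spin}(7)$-manifolds), which carry scalar-flat metrics. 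First I would record two elementary consequences. Since the product of a positive-scalar-curvature metric with a scalar-flat one again has $s\geq 0$, every monomial in Petean's generators carries a metric with $s\geq 0$, and so does any disjoint union of (possibly orientation-reversed) copies of such monomials, as reversing orientation does not disturb the metric; moreover, inspection of \eqref{restricted} shows that a metric with $s\geq 0$ has non-negative normalized Einstein--Hilbert action throughout its conformal class, so any manifold carrying such a metric has $\mathscr{Y}\geq 0$.

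Now let $M^n$ be a simply-connected spin manifold with $n\geq 5$. Writing $[M]=\sum_\alpha c_\alpha[P_\alpha]\in\Omega^{\rm Spin}_n$ as an integer combination of monomials $P_\alpha$ in Petean's generators, we see that $M$ is spin-cobordant to the disjoint union $N$ consisting of $|c_\alpha|$ copies of $P_\alpha$ for each $\alpha$ (with reversed orientation when $c_\alpha<0$), together with one copy of $S^n$ to make $N$ non-empty; by the previous paragraph, $\mathscr{Y}(N)\geq 0$. Iterated connected sum of the components of $N$ is a finite sequence of surgeries on embedded copies of $S^0$, each of codimension $n\geq 3$, so Petean's surgery result yields $\mathscr{Y}(N')>-\varepsilon$ for every $\varepsilon>0$, hence $\mathscr{Y}(N')\geq 0$, where $N'$ is the resulting connected, simply-connected spin manifold, still spin-cobordant to $M$. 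I would then invoke the classical handle-trading argument: because $M$ and $N'$ are simply connected of dimension $n\geq 5$, the spin cobordism joining them can, after surgery on its interior, be built entirely from handles of index $k$ with $3\leq k\leq n-2$; building upward from $N'$ across such a cobordism realizes $M$ by a finite sequence of surgeries on spheres $S^{k-1}$ of codimension $n-k+1\geq 3$. A final application of Petean's surgery result gives $\mathscr{Y}(M)>-\varepsilon$ for all $\varepsilon>0$, hence $\mathscr{Y}(M)\geq 0$. This argument made no use of $\alpha(M)$; and if $M^n$ is not spin, then the Gromov--Lawson--Stolz dichotomy already gives $\mathscr{Y}(M)>0$, so $\mathscr{Y}(M)\geq 0$ for every simply-connected $M^n$ with $n\geq 5$.

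It remains to identify the equality case. If $M$ is spin with $\alpha(M)\neq 0$, Hitchin's generalization of the Lichnerowicz theorem rules out positive-scalar-curvature metrics on $M$, so $\mathscr{Y}(M)\leq 0$ by \eqref{positivity}, and together with $\mathscr{Y}(M)\geq 0$ this forces $\mathscr{Y}(M)=0$. Conversely, if $\mathscr{Y}(M)=0$ then $\mathscr{Y}(M)$ is in particular not positive, and the Gromov--Lawson--Stolz dichotomy leaves only the possibility that $M$ is spin with $\alpha(M)\neq 0$. I expect the one genuinely substantial step --- were one to prove this from scratch rather than citing Petean's inputs --- to be the construction of non-negative-Yamabe generators for the spin-cobordism classes with $\alpha\neq 0$, which rests on the existence theorems of Yau and of Joyce for Ricci-flat metrics of special holonomy, and on the $\varepsilon$-bookkeeping that upgrades the Gromov--Lawson gluing from a statement about the sign of $s$ to a quantitative statement about the Yamabe constant; within the cited framework, the only delicate point is the handle-trading reduction, and it is precisely there (one needs $3\leq n-2$) that the hypothesis $n\geq 5$ enters.
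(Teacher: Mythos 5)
Your proposal is correct and faithfully reconstructs the argument the paper attributes to Petean: combine the $\varepsilon$-refined Gromov--Lawson surgery invariance of $\mathscr{Y}>-\varepsilon$, the generation of $\Omega^{\rm Spin}$ by manifolds carrying metrics with $s\geq 0$, the Gromov--Lawson--Stolz dichotomy, Hitchin's obstruction, and the classical handle-trading reduction of a spin cobordism between simply-connected $n$-manifolds ($n\geq 5$) to a chain of codimension-$\geq 3$ surgeries. The only cosmetic point worth polishing is the passage from the disconnected representative $N$ to the connected $N'$: since $\mathscr{Y}$ is defined by \eqref{nehf} for connected manifolds, it is cleaner to invoke Kobayashi's inequality \eqref{kobble} directly on the iterated connected sum of the components rather than first assigning a Yamabe invariant to $N$ itself.
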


On the other hand, since the  $3$-dimensional Poincar\'e conjecture follows \cite{bbbpm,lott} from 
Perelman's proof \cite{perelman1} of  Thurston's geometrization conjecture,
 any  simply-connected compact $3$-manifold $M^3$ is necessarily 
 diffeomorphic to $S^3$, and
therefore has  $\mathscr{Y}(M)>0$.  In conjunction with  Theorem \ref{petproject} and assertion \eqref{consign}, this  then immediately 
implies Theorem \ref{tour}. 

For simply connected manifolds of dimension $n\neq 4$, Theorem \ref{petproject} provides a 
complete understanding of the  {\em sign} of the Yamabe invariant, but usually says  nothing at all about its 
precise value.  On the other hand, \eqref{ceiling} gives us a universal  upper bound,
while  Obata's theorem provides a non-trivial lower bound for $\mathscr{Y}(M)$ whenever $M$ admits an 
Einstein metric of positive scalar curvature. In conjunction with Kobayashi's inequality \cite{okob} 
\begin{equation}
\label{kobble}
\mathscr{Y}(M^n)\geq  0, ~\mathscr{Y}(N^n) \geq  0\quad \Longrightarrow \quad \mathscr{Y}(M\# N) \geq   \min [\mathscr{Y}(M),\mathscr{Y}(N)]
\end{equation}
this  confines  the Yamabe invariants of   many
 manifolds to   specific (and often reasonably   narrow) ranges. Of course,  \eqref{kobble} is a statement
 about surgery in codimension $n$, but generalizing it to surgeries in other codimension involves extra complications. 
 In this direction, the best available analogue of the  Gromov-Lawson-Petean  surgery result
 is a theorem of 
  Ammann, Dahl, and Humbert \cite{ADH}, which 
says that, for every $n$,  there is a constant $\Lambda_n > 0$ such  that, whenever  $\varepsilon \leq  \Lambda_n$, the condition 
$\mathscr{Y}(M^n) > \varepsilon$ is invariant under elementary surgeries in codimension $\geq 3$.  One consequence is  
the following 
{\sf gap theorem}: for each    $n \geq 5$,  there is a $\delta_n > 0$ such that  every compact simply-connected $n$-manifold $M$ with 
$\mathscr{Y}(M) > 0$ actually satisfies $\mathscr{Y}(M^n) > \delta_n$.

It must however  be emphasized that   simple-connectivity plays a key  role  in the proofs of 
Theorems \ref{tour} and \ref{petproject}, 
and that the story is   known to   become far   more complicated when the fundamental group is non-trivial. 
For example, Perelman's geometrization of $3$-manifolds
involves  monotonicity results for the 
Ricci-flow \cite{perelman1} that allow one to compute the Yamabe invariants of many $3$-manifolds, owing to   the intimate 
relationship between Perelman's  $\bar{\lambda}$ invariant 
and the Yamabe invariant \cite{AIL}. Indeed, 
 Anderson \cite{ander34} showed that  Perelman's results imply 
  that  the hyperbolic metric $h_0$ on any compact
quotient  $M^3= \mathcal{H}^3/\Gamma$ of hyperbolic $3$-space 
  realizes the Yamabe invariant, in the sense 
that $\mathscr{Y}(M) = \mathscr{E}(M, h_0)$;   thus, in particular, 
 $\mathscr{Y}(M) < 0$ and $\mathcal{I}_s (M) > 0$ for any hyperbolic $3$-manifold $M$. On the other hand,  geometrization also tells us that 
 that any compact $3$-manifold $M^3$ with $|\pi_1(M^3) |<\infty$ is a 
  spherical space-form $S^3/\Gamma$, $\Gamma \subset \mathbf{SO}(4)$, 
and therefore  has $\mathscr{Y}(M) > 0$.  In this context,   an  argument due to Bray and Neves \cite{brayneves} 
 shows that the Yamabe invariant of  $\mathbb{RP}^3 = S^3/\ZZ_2$ is actually   realized by the constant-curvature metric, 
so that $\mathscr{Y}(\mathbb{RP}^3 ) = \mathscr{Y}(S^3)/2^{2/3} = 6\pi^{4/3}$. While this method unfortunately does not allow one to similarly 
compute the Yamabe invariant for   other 
spherical space-forms, 
it does  at least  yield   \cite{akuneves}
  a beautiful  {\sf gap theorem}: if    $\mathscr{Y}(M^3) < \mathscr{Y}(S^3)$,  then  $\mathscr{Y}(M^3) \leq \mathscr{Y}(\mathbb{RP}^3 )$.

Related  issues persist in higher dimensions. While
 Gromov, Lawson, and Stolz  showed, for  $n \geq 5$,   that the Dirac operator  is the source of  the only 
obstruction to a  simply-connected
$n$-manifold  $M$ having $\mathscr{Y}(M^n)>0$,   this precept  no longer  holds true  when $\pi_1 (M) \neq 0$. 
Perhaps the first  indication that something else might be at play was provided by the discovery that the $n$-torus 
$\mathbb{T}^n$ does not admit metrics of positive scalar curvature, even though its Dirac operator 
has index zero. Indeed, notice that, by mutiplying by $S^1$ if necessary,   it suffices to prove it when $n=2m$ is even.
Now, while $\alpha (\mathbb{T}^{2m}) = 0$,  there are nonetheless spin$^c$ Dirac operators of non-zero index on any covering $\mathbb{T}^{2m}\to \mathbb{T}^{2m}$,
and that one can arrange for the curvature of the line bundle to which   the spin$^c$ Dirac operator is coupled to 
be the pull-back of an arbitrarily small multiple of the standard symplectic form simply by 
 choosing a suitable covering of 
 sufficiently high degree;  thus, a variation on the Lichnerowicz argument therefore shows that $\mathbb{T}^{2m}$
 cannot admit a metric of scalar curvature $s\geq \varepsilon$ for any $\varepsilon > 0$. However, versions of 
 this argument works equally well if we instead couple the Dirac operator to vector bundles of higher rank, just
 as long as we can ensure that  of the coupled operator has non-zero index, and that the bundle curvature can be chosen to 
 converge point-wise to  zero as we pass to covers of higher and higher degree. 
 By systematizing this idea,  Gromov and Lawson \cite{gvln2} were able to more generally prove the non-existence of 
positive-scalar-curvature metrics  on {\sf enlargeable}  manifolds. 
Here an $n$-manifold $M$ is said to be enlargeable if for some  Riemannian metric $g$
and every  $\rad > 0$, 
 there is 
 a finite spin covering 
$\widetilde{M}\to M$ that, when equipped with pull-back $\tilde{g}$ of $g$,  
 admits a distance-decreasing map of positive  degree to the standard $n$-sphere of radius $\rad$. 
 This condition turns out  to be  to be  metric-independent, and moreover   depends only  on  the  homotopy-type of $M$. 
 While we will find these results on 
 enlargeable manifolds very useful  in \S \ref{yzero}, an inherent limitation of the method is  that it  only applies to 
  manifolds whose universal covers are spin.

However, around the same time, Schoen and Yau \cite{syrger} discovered an entirely different method  
 that, for example, also    proves the non-existence
of positive-scalar-curvature metrics on manifolds like $\mathbb{T}^5\# [\mathbf{SU}(3)/\mathbf{SO}(3)]$, where the universal cover 
has $W_3\neq 0$, and so is not even spin$^c$.  Since will also  make essential use of this technique in \S \ref{yzero},   we will now carefully review 
the Schoen-Yau method in the context of  manifolds of dimension $n\leq 7$. For  recent progress on  extending these arguments
    to higher
dimensions, see Schoen and Yau's  recent preprint\cite{schoenyau17}.

The Schoen-Yau method depends in part on Jim Simons' second-variation formula  for a minimal hypersurface. 
Let $(\Sigma^m , h) \subset (M^{m+1}, g)$, $m \geq 2$,  be  a compact  oriented 
 minimal hypersurface in an oriented Riemannian 
 manifold, and let  $\Sigma_t \subset  M$  be any smooth  $1$-parameter variation of $\Sigma=\Sigma_0$ with 
with normal variation vector vector field $\mathsf{v} = u \mathsf{n}$, where $\mathsf{n}$ is the unit normal vector of $\Sigma$.  
The second-variation formula \cite[Theorem 3.2.2]{simons} then asserts that 
the $m$-dimensional volume $\mathscr{A}(t)$ of $\Sigma_t$ satisfies 
\begin{equation}
\label{jim} 
\mathscr{A}^{\prime \prime} (0) = \int_\Sigma \left[|\nabla u|^2 -r_g(\mathsf{n}, \mathsf{n}) u^2 - |\gemini|^2u^2\right]\, d\mu_h 
\end{equation}
where $r_g$ is the  Ricci tensor of the ambient metric and $\gemini$ is the 
 second fundamental form of  $\Sigma \subset M$. However,
the Gau{ss}-Codazzi equations
imply  that  the scalar curvatures of $h$ and $g$ are related  along $\Sigma$ by 
 $$s_h= s_g - 2r_g (\mathsf{n} , \mathsf{n} )+H^2 - |\gemini |^2,$$
 where  the mean curvature  $H = h^{ij} \gemini_{ij}$ of $\Sigma$ 
vanishes in our case because   $\Sigma$ is  assumed to be minimal. Consequently, 
  \eqref{jim} can  be rewritten as 
\begin{equation}
\label{jimmy}
 \int_\Sigma \left[2|\nabla u|^2 + 
s_h u^2\right]\, d\mu_h= 
2\mathscr{A}^{\prime \prime} (0)  + \int_\Sigma (s_g + |\gemini|^2)u^2\, d\mu_h
\end{equation}
for every $1$-parameter variation of $\Sigma$.

If we now   assume   that  $(M,g)$ has positive scalar curvature $s_g > 0$ and that 
$\Sigma\subset M$ is  {\sf volume-minimizing} in its homology class, it then follows that 
$\Sigma$ carries a positive-scalar-curvature metric $\widehat{h}$ conformal to $h$. 
Indeed, since our  volume-minimizing hypothesis on $\Sigma$ 
  forces $\mathscr{A}^{\prime \prime} (0) \geq 0$
for any $1$-parameter variation,  plugging 
the positivity of $s_g$  into   \eqref{jimmy}  now forces  
\begin{equation}
\label{james}
 \int_\Sigma \left[2|\nabla u|^2 + 
s_h u^2\right]\, d\mu_h  > 0
\end{equation}
for any smooth function $u\not \equiv 0$. 
If $m\geq 3$,  now let $\widehat{h}= u^{p-2}h$  be a Yamabe metric, where 
 $p = \frac{2m}{m-2}>2$, and   notice that \eqref{james} then  tells us that 
$$  Y(\Sigma , [h]) = \frac{\int_\Sigma [(p+2) |\nabla u |^2 + s_hu^2 ]d\mu_h}{\| u\|_{L^p}^2} > 0, $$
which shows  that  $(\Sigma, \widehat{h})$     has positive scalar curvature. 
On the other hand, if  $m=2$, setting  $u\equiv 1$  in  \eqref{james} yields  $\chi (\Sigma ) > 0$ by  Gauss-Bonnet, so one therefore has  
 $(\Sigma , [h])\cong (S^2, [g_0])$ by classical uniformization.

The Schoen-Yau strategy now proceeds by downward induction on the dimension of the manifold. Suppose a smooth compact
oriented $n$-manifold, $n\leq 7$, admits a metric of positive scalar curvature, and  let $\mathsf{a} \in H^1 (M, \ZZ)$ be a non-trivial 
cohomology class. Compactness results in geometric measure theory   \cite[\S 5.1.6]{federer} guarantee that there is a mass-minimizing rectifiable current that
represents the Poincar\'e dual homology class $\mathsf{a}\in H_{n-1}(M, \ZZ )$, and our assumption that $n\leq 7$ then guarantees,
by a regularity  result that  Federer \cite[Theorem 5.4.15]{federer}  deduced from  a  lemma of  Simons \cite[Lemma 6.1.7]{simons}, 
 that this current is moreover   a sum of disjoint smooth compact oriented hypersurfaces, with 
positive integer multiplicities. Any one of these hypersurfaces $\Sigma^{n-1} \subset M^n$ then admits 
a metric of positive scalar curvature by the above argument, and one may then try to repeat the same argument to 
further reduce the dimension. 

For example, if there is a map $M^n \to \mathbb{T}^n$ of non-zero degree, pulling back the generators
of $H^1(\mathbb{T}^n, \ZZ)$ yields a collection of classes
$\mathsf{a}_1, \ldots , \mathsf{a}_n \in H^1 (M, \ZZ)$ with $\mathsf{a}_1 \cup \cdots \cup  \mathsf{a}_n \neq 0$.
We now  apply  the above argument to 
 $\mathsf{a}=\mathsf{a}_n$, and  then  choose the hypersurface $\Sigma^{n-1}$
so that $\langle \mathsf{a}_1 \cup \cdots \cup  \mathsf{a}_{n-1} , [\Sigma^{n-1} ]\rangle \neq 0$.
Iterating the same argument with respect to conformal rescalings of the induced metrics  then  produces a compact surface $\Sigma^2\subset \Sigma^3 \subset \cdots \subset \Sigma^{n-1}$ with 
positive scalar curvature and $\langle \mathsf{a}_{1} \cup   \mathsf{a}_2 , [\Sigma^{2} ]\rangle \neq 0$,
contradicting the fact that   $\chi (\Sigma^2) > 0$ implies $b_1(\Sigma^2)=0$. Thus, whenever a smooth compact
oriented $n$-manifold $M$, $n\leq 7$, admits a map $M\to \mathbb{T}^n$ of non-zero degree, it necessarily satisfies $\mathscr{Y}(M)\leq 0$. As a corollary, 
we thus deduce  that examples  like the previously-mentioned 
 connected sum $M^5= \mathbb{T}^5 \# [\mathbf{SU}(3)/\mathbf{SO}(3)]$ 
cannot admit metrics of positive scalar curvature. 

While the above discussion emphasizes the use of homologically volume-minimizing hypersurfaces, 
 the Schoen-Yau method is also applicable  in contexts where one can prove the existence of  stable minimal hypersurfaces
 for other reasons. 
 As we will also see in \S \ref{yzero},   the inductive step in the Schoen-Yau approach
 can  also be applied in substantially different ways. Variations on this approach thus  allow one 
  to prove the non-positivity of the Yamabe invariant for many   interesting manifolds with large fundamental group
 that  do not neatly conform to  the  simple paradigm we've just described.

\section{Dimension Four:  Yamabe Negative Case}
\label{yneg}

Having  carefully discussed the behavior of the  Yamabe invariant in other dimensions, we now shift our focus to 
dimension four, where we will see that 
the relationship between 
scalar curvature and differential topology is strangely   
  different.
  
 The peculiar character of $4$-dimensional geometry is 
 largely ascribable to a single Lie-theoretic fluke: while the rotation group  $\mathbf{SO}(n)$ is  a {simple} Lie group
 for every other $n\geq 3$, this fails in dimension four. Indeed, since 
 $$\mathbf{Spin}(4)  =  \mathbf{Sp}(1) \times  \mathbf{Sp}(1) \cong \mathbf{Spin}(3) \times  \mathbf{Spin}(3), $$
  the adjoint action of $\mathbf{SO}(4)$ on $\mathfrak{so}(4)$ is consequently   reducible:
 \begin{equation}
\label{al-deco}
\mathfrak{so}(4)\cong  \mathfrak{so}(3)\oplus \mathfrak{so}(3).
\end{equation}
 This has an immediate and  powerful impact on the geometry of $2$-forms, 
because, for any $n$,    ${\mathfrak s \mathfrak o}(n)$ and  $\Lambda^2({\mathbb R}^n)$ 
are   isomorphic as $\mathbf{SO}(n)$-modules. Thus, the decomposition \eqref{al-deco}  implies  that 
 the rank-6 bundle of 2-forms on an oriented Riemannian 4-manifold $(M,g)$ 
    invariantly  decomposes as  the Whitney sum of 
 two rank-3 bundles
\begin{equation} 
\Lambda^2 = \Lambda^+ \oplus \Lambda^- ,
\label{f-deco} 
\end{equation}
a phenomenon without analogue in  other   dimensions. 
Here the summands $\Lambda^\pm$  just  turn out to be  the the $(\pm 1)$-eigenspaces of the Hodge star
operator 
\begin{equation}
\label{starring}
\star: \Lambda^2 \to \Lambda^2,
\end{equation}
and, for this reason,  $\Lambda^+$ is  called  the bundle of {\sf self-dual} $2$-forms, 
while   $\Lambda^-$ is called the bundle of {\sf anti-self-dual} $2$-forms.
Of course, the distinction between the two depends on a choice
of orientation; reversing the orientation of $M$ simply interchanges $\Lambda^+$ and $\Lambda^-$.

On any oriented Riemannian $4$-manifold $(M,g)$,
the bundle $\Lambda^+\to M$
 carries a natural  inner product and orientation, so 
 every fiber of its unit sphere bundle $Z = S(\Lambda^+)$ carries both a metric and 
orientation. This allows us to consider the so-called {\em twistor space}  $Z$ as a bundle of complex projective lines  $\CP_1$. 
Remarkably, this $\CP_1$-bundle  can  always be realized 
\begin{equation}
\label{presigma}
\mathbb{P}(\mathbb{V}_+)  = S(\Lambda^+)
\end{equation}
as the projectivization of 
 a rank-$2$ complex vector bundle $\mathbb{V}_+\to M$. What's more,  the choice of such a  $\mathbb{V}_+$ 
 is  equivalent  to choosing a {\sf spin$^c$ structure} on $M$.
 This  stems from  the fact that $Z$ can naturally  be expressed as
 \begin{equation}
\label{ahas}
S(\Lambda^+) = \mathfrak{F}/\mathbf{U}(2),
\end{equation}
 where $\mathfrak{F}$ is the principal $\mathbf{SO}(4)$-bundle of oriented orthonormal frames.

 Indeed, according to the usual definition, a spin$^c$ structure on $(M, g)$ is a choice of  principal 
 $\mathbf{Spin}^c (4)$-bundle
$\widehat{\mathfrak{F}}\to M$, 
where 
$$\mathbf{Spin}^c (4):= [\mathbf{Spin} (4)\times \mathbf{U}(1)]/\ZZ_2 = [\mathbf{Sp} (1)\times \mathbf{Sp} (1)\times \mathbf{U}(1)]/\langle (-1,-1,-1)\rangle ,$$
together with a fixed  isomorphism $\mathfrak{F} =\widehat{\mathfrak{F}}/\mathbf{U}(1)$. 
Up to isomorphism,  such a structures is  determined by the Chern class $\widehat{\mathfrak{c}}\in H^2 (\mathfrak{F},\ZZ)$
of the circle bundle $\widehat{\mathfrak{F}}\to \mathfrak{F}$, and  $\widehat{\mathfrak{c}}$   can in principle   be  any element of $H^2 (\mathfrak{F},\ZZ)$ 
whose restriction to a fiber yields  the non-trivial element of $H^2 (\mathbf{SO}(4),\ZZ) \cong \ZZ_2$. 
On the other hand, expressing $Z$ as $\mathbb{P}(\mathbb{V}_+)$ gives rise to  an $\mathcal{O}(1)$ line-bundle  $\mathscr{L}\to Z$,
and so gives us a cohomology class $\mathfrak{c}=c_1(\mathscr{L})\in H^2 (Z, \ZZ)$ that satisfies $\langle \mathfrak{c} , [S^2] \rangle = 1$, 
 where $[S^2]\in H_2(Z,\ZZ)$ is the
    the fiber homology class. This allows us to associate a unique spin$^c$ structure to any choice of $\mathbb{V}_+$
    by setting $\widehat{\mathfrak{c}} = {\zap q}^*\mathfrak{c}$, where ${\zap q}: \mathfrak{F} \to \mathfrak{F}/\mathbf{U}(2)$
    is the quotient map. Conversely, one can construct $\mathbb{V}_+$ from a principal 
 $\mathbf{Spin}^c (4)$-bundle
$\widehat{\mathfrak{F}}\to M$
    by applying the associated bundle construction to the  representation of $\mathbf{Spin}^c (4)$ on $\CC^2$ 
   $$ [\mathbf{Sp} (1)\times \mathbf{Sp} (1)\times \mathbf{U}(1)]/\langle (-1,-1,-1)\rangle
   \longrightarrow [\mathbf{Sp} (1)\times \mathbf{U}(1)]/\langle (-1,-1)\rangle = \mathbf{U}(2)$$
  gotten by dropping the second $\mathbf{Sp} (1)$ factor. Of course, dropping the first $\mathbf{Sp} (1)$ factor
 instead  gives us a second rank-$2$ complex vector bundle $\mathbb{V}_-$ with 
  $$\mathbb{P}(\mathbb{V}_-)  = S(\Lambda^-).$$
  The relationship between these two representations  then guarantees that 
  \begin{equation}
\label{bigdog}
\Hom (\mathbb{V}_+ , \mathbb{V}_-) =  \CC \otimes T^*M , 
\end{equation}
while  the Hermitian line-bundle $L$ associated to the representation 
 $$ [\mathbf{Sp} (1)\times \mathbf{Sp} (1)\times \mathbf{U}(1)]/\langle (-1,-1,-1)\rangle
   \longrightarrow  \mathbf{U}(1)/\langle -1\rangle = \mathbf{U}(1)
 $$
 automatically satisfies 
\begin{equation}
\label{lineup}
L:= \wedge^2 \mathbb{V}_+= \wedge^2 \mathbb{V}_- .
\end{equation}
This in particular makes $\widehat{\mathfrak{F}}$ into a double cover of the fiber-wise product  $\mathfrak{F}\oplus S(L)$
of the oriented  Riemannian frame-bundle and the unitary frames for $L$, so any  $\mathbf{U}(1)$ connection $\theta$
on $L$ induces a uniquely-defined principal $\mathbf{Spin}^c(4)$-connection on $\widehat{\mathfrak{F}}$,
and this in turn  induces Hermitian connections $\nabla_\theta$ on the bundles $\mathbb{V}_\pm$. From these, 
one can of course recover   the Riemannian  connection on $TM$  via the isomorphism  \eqref{bigdog}.

Now, using the Gysin sequence and Poincar\'e duality,  the Euler class $\mathsf{e} (\Lambda^+)\in H^3(M,  \ZZ)$ can be
shown to vanish  for any compact oriented Riemannian $4$-manifold $(M,  g)$, because a  twistor-lift construction 
 allows one to show  
that the  torsion subgroup $\mathfrak{T}_2(Z)\subset H_2(Z, \ZZ)$ surjects onto  
the torsion subgroup  $\mathfrak{T}_2(M)\subset H_2(M, \ZZ)$ under  the twistor  projection 
$Z \to M$, and that  the torsion subgroup $\mathfrak{T}^3(M)\subset H^3(M, \ZZ)$ therefore injects into $\mathfrak{T}^3(Z)\subset H^3(Z, \ZZ)$.
 This is of course in perfect agreement with  the conventional approach \cite{hitharm,lawmic} 
 to the existence of spin$^c$ structures, because $\mathsf{e}(\Lambda^+)$ 
can also be shown  {\em a priori} to coincide with the   third {\em  integral} Stiefel-Whitney class 
$W_3(TM)\in H^3 (M, \ZZ)$,
and the latter can in turn be shown to vanish for any oriented $4$-manifold 
by an argument due to Hirzebruch and Hopf \cite{hiho}. 
Thinking of a spin$^c$ structure as an element  $\mathfrak{c}\in H^2(Z, \ZZ)$ with fiber integral $+1$  also
gives such structures  a metric-independent meaning, because the $2$-sphere bundles associated with any two 
metrics are naturally bundle-equivalent. Moreover,  the Gysin sequence of $\Lambda^+\to M$ 
implies that $H^2(M, \ZZ)$ acts freely and transitively on the set of spin$^c$ structures by pull-back;
and, in terms of the vector bundles  $\mathbb{V}_\pm\to M$,  the effect  of this is that 
$$
\mathbb{V}_+ \rightsquigarrow  E\otimes \mathbb{V}_+ , \quad \mathbb{V}_-\rightsquigarrow  E\otimes \mathbb{V}_- , 
$$
as $E$ ranges over all  complex line-bundles $E\to M$.

One consequence of \eqref{ahas} is that $Z$ can be  canonically identified \cite{AHS} with the set of all 
metric-compatible point-wise almost-complex structures that also determine the given orientation of
$M$. Concretely, this amounts to the observation  that if $J\in \End (T_pM)$, $J^2=-I$,   is  an almost-complex structure  
at $p$ that preserves  $g$ and determines  the fixed orientation, and if we then associate a tensor  $\omega$ with $J$
via  the prescription 
$$\omega (\cdot , \cdot ) = g(J\cdot , \cdot ),$$
then $\omega /\sqrt{2}$ is a unit-norm self-dual $2$-form, and every unit-norm self-dual form at $p$ conversely 
arises  this way from a unique almost-complex structure $J$ at $p$.  
This gives rise to yet another way to understand spin$^c$ structures. Indeed, for any given spin$^c$ structure, the vector bundle
$\mathbb{V}_+$ has real rank equal to the dimension of $M$, so  one can  find smooth sections of $\mathbb{V}_+$ that 
are non-zero everywhere except at a chosen base-point $q\in M$ for our connected oriented compact $4$-manifold.
Since this gives us a section of $\mathbb{P} (\mathbb{V}_+)$ on the complement of the base-point, 
 equations   \eqref{presigma} and \eqref{ahas} together 
give the punctured manifold $M-\{ q\}$  an associated almost-complex structure. On the other hand, 
it is a standard fact \cite{lawmic} that  an  almost-complex structure on 
$M-\{ q\}$  determines a spin$^c$ structure on $M-\{ q\}$, and  it is also easy to see that any such spin$^c$ structure on $M-\{ q\}$
 uniquely 
extends to a spin$^c$ structure on all of $M$. These two constructions are actually 
inverses of each other, so the  spin$^c$ structure determined by the constructed almost-complex structure on the punctured manifold
is  exactly the one we started with.  Thus,  a spin$^c$ structure on $M$ may also be thought of
as an equivalence class of  almost-complex structures on the punctured manifold $M-\{ q\}$, where $q\in M$ is an arbitrary  
base-point.  Since \eqref{bigdog} moreover allows us to identify 
the restriction of $\mathbb{V}_-$ to $M-\{ q\}$ 
with the $(1,0)$-tangent bundle of $J$,  the line-bundle $L=\det \mathbb{V}_-$ restricted to $M-\{ q\}$
is actually the anti-canonical line bundle of the almost-complex structure $J$. Since $H^2(M, \ZZ ) = H^2(M-\{ q\}, \ZZ )$,
this in particular shows that 
\begin{equation}
\label{lift}
c_1(L) \equiv w_2 (M) \bmod 2 . 
\end{equation}
Also notice that the action of $H^2 (M, \ZZ)$ on spin$^c$ structures is manifested in this context by
$$L\rightsquigarrow L  \otimes E^2 , $$
so that  \eqref{lift} is actually the only  constraint on $c_1(L) = c_1 (\mathbb{V}_+)$. Indeed, if 
(but only if) 
$H_1(M, \ZZ)$ has trivial $2$-torsion,  spin$^c$ structures are then  in one-to-one correspondences with  integer
cohomology classes  $c_1(L)$ satisfying \eqref{lift}.

While the  above discussion certainly shows that \eqref{presigma}    has significant  geometric consequences, the 
 intimate relationship  between the bundles $\mathbb{V}_+$ and 
$\Lambda^+$  has a more  vivid  realization that is ultimately  far more   consequential. Indeed, \eqref{presigma}
can be enriched into a natural real-quadratic map 
$$\sigma : \mathbb{V}_+ \to \Lambda^+$$
that plays a key role in the theory of spin$^c$ Dirac operators. 
Indeed, since  $\mathbb{V}_+$ and $\Lambda^+$ are,  respectively,  the bundles associated with  the 
defining and 
adjoint representations of $\mathbf{U}(2)= \mathbf{Spin}^c(4)/(\mathbf{1} \times \mathbf{Sp}(1)\times \mathbf{1})$, 
the relationship between these representations gives us a (Clifford multiplication)  isomorphism 
\begin{equation}
\label{canine}
\End_0 (\mathbb{V}_+) = \CC\otimes \Lambda^+,
\end{equation}
 where $\End_0$ indicates  trace-free endomorphisms, and where 
complex conjugation in $\CC\otimes \Lambda^+$ corresponds to 
 $A\mapsto -A^*$ in $\End_0 (\mathbb{V}_+)$. Thus,  $\Lambda^+$ is identified with 
the  skew-adjoint trace-free endomorphisms of  $\mathbb{V}_+$, while $i\Lambda^+$ 
is  identified with 
the  self-adjoint trace-free endomorphisms. For any $\Phi\in \mathbb{V}_+$, we   may thus uniquely define $\sigma (\Phi )\in \Lambda^+$
by declaring  that the corresponding trace-free  skew-adjoint  map $\mathbb{V}_+\to \mathbb{V}_+$ is to  be given by 
$$\sigma (\Phi)\cdot  \Psi =  i \left[ \langle \Psi , \Phi \rangle \Phi - \frac{1}{2} |\Phi|^2 \Psi \right].$$
With this convention, one then has 
\begin{equation}
\label{radial}
|\sigma (\Phi )| = \frac{|\Phi|^2}{2\sqrt{2}}
\end{equation} 
for any $\Phi\in \mathbb{V}_+$, and 
\begin{equation}
\label{clef} 
\langle  \omega  \cdot \Phi , \Phi \rangle = 4i \langle \omega , \sigma (\Phi ) \rangle 
\end{equation}
for any $\omega \in \Lambda^+$.

Now, given a spin$^c$ structure, we have already observed that every 
unitary 
connection $\theta$ on $L$, in conjunction with the Riemannian  connection on $TM$,  induces a unitary connection 
$$\nabla_{\theta} : \Gamma ({\mathbb V}_{+})\to \Gamma (\Lambda^1\otimes {\mathbb V}_{+})$$
on $\mathbb{V}_+$. 
Composing this with the (Clifford multiplication)  map
$$\Lambda^1\otimes {\mathbb V}_{+}\to {\mathbb V}_{-}$$
induced by \eqref{bigdog} then 
gives us  \cite{hitharm,lawmic} a 
spin$^c$ analogue  
$$\dir_\theta \! : \Gamma ({\mathbb V}_{+})\to \Gamma ({\mathbb V}_{-})$$
of the chiral Dirac operator. 
In this setting, the Lichnerowicz Weitzenb\"ock formula  \eqref{lwf} then generalizes \cite{hitharm,lawmic} 
as 
$$\dir_{\theta}^{\ast}\! \dir_{\theta} = \nabla_\theta^*\nabla_\theta + \frac{s}{4} - \frac{1}{2} F_\theta^+\cdot $$ 
where  the self-dual part  $F_{\theta}^{+}\in i  \Lambda^+$ of the curvature of $L$ 
  acts on $\mathbb{V}_+$ via \eqref{canine}.  In light of  \eqref{clef}, we therefore have 
\begin{equation}
\label{wtw} 
\langle \Phi , \dir_{\theta}^{\ast}\! \dir_{\theta} \Phi  \rangle = \frac{1}{2}\Delta |\Phi |^2 + |\nabla_{\theta} \Phi |^2 + 
\frac{s}{4} |\Phi |^2 + 2 \langle -iF_{\theta}^{+} , \sigma (\Phi ) \rangle 
\end{equation}
for any  $\Phi\in \Gamma ({\mathbb V}_+)$. It  is this last   form of the spin$^c$ Weitzenb\"ock formula that we 
will generally   use in what follows. 

On  a spin manifold, the 
standard Dirac operator is entirely determined by the relevant   Riemannian metric, and we previously saw in 
\S \ref{yammer} that \eqref{lwf} then implies a  non-existence result for 
positive-scalar-curvature metrics on spin manifolds $M$ with $\alpha (M)\neq 0$.   
However, this approach does not obviously  generalize to the spin$^c$ setting, because the curvature
of the connection $\theta$ on the  line bundle $L$ appears  in \eqref{wtw},
and is in principle entirely independent of the Riemannian geometry of $(M,g)$. 
To overcome this, is therefore necessary to impose  conditions on $\theta$ that somehow  tie it 
more closely to the   Riemannian geometry of $(M,g)$. One interesting  choice, which we will discuss in 
detail in \S \ref{ypos} below,   is to simply demand that the curvature 
of $\theta$ be a  harmonic $2$-form. However, a more subtle condition, originally  introduced  by 
Witten \cite{witten}, instead requires that $\Phi$ and $\theta$ together  solve  a coupled 
system of PDE. 

Given a spin$^c$ structure on a compact oriented Riemannian $4$-manifold $(M,g)$, the {\sf Seiberg-Witten equations} thus ask for 
  the Hermitian connection $\theta$ on $L$ and the generalized spinor $\Phi \in \Gamma (\mathbb{V}_+)$ to satisfy 
 the coupled equations 
 \begin{eqnarray} \dir_\theta\Phi &=&0\label{drc}\\
 F_\theta^+&=&i \sigma(\Phi) \label{sd}\end{eqnarray}
 and a solution $(\Phi, \theta )$ of this system is  then called {\sf irreducible} if  $\Phi \not\equiv 0$. 
 However,  \eqref{radial} and \eqref{wtw} together imply that  $\Phi$  satisfies  
  \begin{equation}
 0=	2\Delta |\Phi|^2 + 4|\nabla_{\theta}\Phi|^2 +s|\Phi|^2 + |\Phi|^4 ,	
 	\label{wnbk}
 \end{equation}
so one immediately sees that the Seiberg-Witten equations  (\ref{drc}--\ref{sd}) cannot admit an irreducible   solution 
relative to a metric $g$ with  $s >0$. But why would one ever expect for such solutions to exist? 
The Seiberg-Witten  system is non-linear, so one certainly cannot merely rely on an index calculation
to predict the existence of  solutions. Instead,  Witten had the  remarkable insight that one can instead define a 
new invariant of a smooth compact oriented $4$-manifold with  fixed  spin$^c$  structure by  ``counting'' solutions
of these equations,   in a  manner that can then be shown to be metric-independent. 

However,  whenever the solution space of the Seiberg-Witten equations  is non-empty, it is automatically infinite-dimensional, 
because 
 the {\em gauge group}  
$$\mathscr{G} = \{ \mbox{smooth maps } {\zap f} : M\to S^1\subset \CC \}$$
of circle-valued functions 
 acts on solutions of   (\ref{drc}--\ref{sd}) by 
 $$(\Phi , \theta ) \longmapsto ({\zap f}\Phi , \theta + 2d\log {\zap f}),$$
 thereby carrying solutions of (\ref{drc}--\ref{sd}) into new solutions that 
 essentially just differ by automorphisms of $L$, and so are  
  geometrically really
 the same. Given a spin$^c$ structure $\mathfrak{c}$ on $M$, we are  thus led to consider, for each  Riemannian metric $g$,
 the Seiberg-Witten {\em moduli space} 
 \begin{equation}
\label{moduli}
 \mathfrak{M}_\mathfrak{c} (g) = \{ \mbox{solutions of  (\ref{drc}--\ref{sd})}\}/ \mathscr{G}, 
\end{equation}
and this moduli space can always  be shown to be compact. 
However, $ \mathfrak{M}_\mathfrak{c} (g)$ 
is not  necessarily a manifold, for reasons we will need to overcome in order 
 to define Witten's invariant. 

Fortunately, however, 
there is a simple remedy for this difficulty whenever 
 $b_+(M)\neq 0$. Here $b_+(M)$ 
is the oriented homotopy invariant of $M$ that may be 
calculated by diagonalizing  the {\em intersection form} 
\begin{eqnarray*}
\bullet  :
H^{2}(M, {\mathbb R})\times H^{2}(M, {\mathbb R})	
 & \longrightarrow & ~~~~ {\mathbb R}  \\
	( ~ [\varphi ] ~ , ~ [\psi ] ~) ~~~~~ & 
	\mapsto  & \int_{M}\varphi \wedge \psi ~.
\end{eqnarray*}
of $M$ with real coefficients, and then counting 
$$\left[ 
	  \begin{array}{rl}  \underbrace{
	 \begin{array}{ccc}
	   		 1 &  \qquad &	 \quad  \\
	   		  &	\ddots &   \\
	   		  &	 & 1
	   	  \end{array}}_{b_{+}(M)}
	   	   & 
	   		   \\ 
	   {\scriptstyle b_{-}(M)}  \!
	    \left\{\begin{array}{r}
	    \\
	    \\
	    \\
	    \\
	    \end{array}
	    \right. \! \! \! \! \! \! 
	    & \begin{array}{ccc}
	   		 -1	&  \qquad &  \quad \\
	   		  &	\ddots &   \\
	   		  &	 & -1
	   	  \end{array}
	    \end{array} 
	    \right] ~.
$$
the number of positive directions of $\bullet$ in
the second cohomology;  similarly,   the number of negative directions
is called $b_-(M)$, and   we automatically have $b_+(M) + b_-(M)=b_2(M)$ because 
Poincar\'e duality  guarantees that the intersection pairing $\bullet$
is non-degenerate. This may be made more concrete by identifying $H^2(M, \RR)$ with 
the space of harmonic $2$-forms 
$$\mathcal{H}^2_g =\{ \varphi \in \Gamma (\Lambda^2) ~|~
d\varphi = 0, ~ d\star \varphi =0 \} $$
via the Hodge Theorem; since the Hodge star operator $\star$ defines an involution 
of the right-hand side,  decomposing $\mathcal{H}^2_g$ into the $(\pm1)$-eigenspaces 
of $\star$  then yields 
\begin{equation}
\label{deco-harm} 
\mathcal{H}^2_g = {\mathcal H}^+_{g}\oplus {\mathcal H}^-_{g},
\end{equation}
where
$${\mathcal H}^\pm_{g}= \{ \varphi \in \Gamma (\Lambda^\pm) ~|~
d\varphi = 0\} $$
is  the space of closed (and hence harmonic) self-dual  (respectively, anti-self-dual)  $2$-forms.
We then have 
$$
b_\pm (M) = \dim {\mathcal H}^\pm_{g}, 
$$
because $\bullet$ may concretely be diagonalized, in the  above manner, 
by  choosing an  $L^2$-orthonormal basis for ${\mathcal H}^2_{g}$
consisting of  an  $L^2$-orthonormal basis for ${\mathcal H}^+_{g}$, followed by 
an $L^2$-orthonormal basis for ${\mathcal H}^-_{g}$. 

With this in mind, we now 
 generalize the Seiberg-Witten equations by replacing  \eqref{sd} 
 with the ``perturbed'' equation 
\begin{equation}
\label{ptsd}
iF_\theta^++\sigma(\Phi) =\eta
\end{equation}
for some self-dual $2$-form $\eta\in \Gamma (\Lambda^+)$.
If the harmonic part $\eta_H$ of $\eta$ satisfies 
\begin{equation}
\label{no-harm}
\eta_H \neq 2\pi [c_1(L)]^+,
\end{equation}
where $[c_1(L)]^+$ is the image of $c_1(L)$ under the projection $H^2(M, \RR)\to {\mathcal H}^+_{g}$
defined by \eqref{deco-harm}, then any solution of \eqref{drc} and \eqref{ptsd} is  irreducible, in the
sense that $\Phi\not\equiv 0$. The Smale-Sard theorem  then allows one to show that, 
 for a set of $\eta$ of the second Baire category,
the moduli space 
$$ \mathfrak{M}_\mathfrak{c} (g, \eta ) = \{ \mbox{solutions of  \eqref{drc} and \eqref{ptsd}}\}/ \mathscr{G}$$
is a compact manifold of 
dimension\footnote{When the dimension predicted by \eqref{mdim} is negative, 
this  statement is {\em defined} to mean    that $\mathfrak{M}_\mathfrak{c} (g, \eta ) = \varnothing$
for generic $\eta$.}  
\begin{equation}
\label{mdim}
\dim \mathfrak{M}_\mathfrak{c} (g, \eta ) = \frac{c_1^2(L) - (2\chi + 3\tau )(M)}{4},
\end{equation}
where 
$\chi (M) = (2-2b_ 1+ b_2)(M)$  and $\tau (M) = (b_+-b_-)(M)$  respectively denote the Euler characteristic 
and signature of our smooth compact oriented connected $4$-manifold, and where $c_1^2(L) := c_1(L)\bullet c_1(L)$.

To prove this claim, one  imposes the 
harmless gauge-fixing condition 
 \begin{equation}
\label{gauge} 
d^*(\theta - \theta_0) =0 , 
\end{equation}
relative to some   an arbitrarily chosen reference connection $\theta_0$  on $L$, and  notices that this simply   cuts 
 down the action of the infinite-dimensional gauge group $\mathscr{G}$ to  that of the $1$-dimensional Lie group
$$\mathscr{G}_0 = \{ \mbox{harmonic  maps } {\zap f} : M\to S^1 \}= S^1 \rtimes H^1 (M, \ZZ ).$$
This reduces the problem to understanding the fibers of  the ``monopole map''  
\begin{eqnarray}
L^2_{k} ({\mathbb V}_+)  \oplus  L^2_{k} (\Lambda^1)
&{\longrightarrow}& 
L^2_{k-1}({\mathbb V}_-) \oplus L^2_{k-1}(\Lambda^+)  \oplus L^2_{k-1}/\RR \label{raw}\\
(\Phi, ~\vartheta )\qquad  &\longmapsto& (D_{\theta_0 + i\vartheta} \Phi , ~iF^+_{\theta_0} - d^+\vartheta + \sigma (\Phi ) , ~d^*\theta )
\nonumber
\end{eqnarray}
for any sufficiently large $k$.  By ellipticity, this is a Fredholm map, and the the index theorem, applied to the linearization, 
tells us that the  Fredholm index of \eqref{raw} equals the right-hand side 
of  \eqref{mdim} plus one, where the $+1$ arises from our having modded out by the constant functions $\RR$
in the codomain. The  moduli space $\mathfrak{M}_\mathfrak{c} (g, \eta )$ is then the fiber  over 
 $(0,\eta , 0)$, modulo the action of  $\mathscr{G}_0$. 
Because the linearization of $\dir_\theta \oplus d^*$ at an irreducible solution  always maps surjectively  onto 
$L^2_{k-1}({\mathbb V}_-)   \oplus L^2_{k-1}/\RR$, 
 the Smale-Sard theorem then tells us that  $(0,\eta, 0)$ is a regular value 
for  generic $\eta$,
and  the corresponding fiber is therefore   a smooth manifold by the implicit function theorem. 
On the other hand, 
compactness follows from  the generalization 
\begin{equation}
\label{pwbk}
 0=	2\Delta |\Phi|^2 + 4|\nabla_{\theta}\Phi|^2 +s|\Phi|^2 + |\Phi|^4 - 8 \langle \eta , \sigma (\Phi )\rangle
\end{equation}
of \eqref{wnbk} arising from \eqref{drc} and \eqref{ptsd}, because this Weitzenb\"ock formula
implies that any irreduducible solution satisfies the  point-wise  bound 
\begin{equation}
\label{squeeze}
 |\Phi |^2 \leq \max (2\sqrt{2}|\eta | - s)
\end{equation}
everywhere. Since the action of $\mathscr{G}_0$ also allows us to  assume that the harmonic part of $\vartheta$ is 
bounded,   boot-strapping then  shows that 
$(\Phi , \vartheta)$ belongs to a bounded subset of $L^2_{k+1} ({\mathbb V}_+)  \oplus  L^2_{k+1} (\Lambda^1)$
for any large $k$, and  the Rellich theorem then says that  its image in $L^2_{k} ({\mathbb V}_+)  \oplus  
L^2_{k} (\Lambda^1)$ is necessarily compact. Since \eqref{no-harm} again  implies that every solution is irreducible, 
 $\mathscr{G}_0$ acts
freely and properly, and  $\mathfrak{M}_\mathfrak{c} (g, \eta )$
is therefore  a smooth compact manifold, with  dimension given by \eqref{mdim},  for ``most'' choices of $\eta$.  

To define Witten's invariant, we now restrict ourselves to the case when the ``expected dimension'' \eqref{mdim} 
of the moduli space is {\sf zero}. Now, remarkably enough, the right-hand side of \eqref{mdim} 
 equals $c_2 (\mathbb{V}_+) =e(\mathbb{V}_+)$ for any spin$^c$ structure, so  this
 expected dimension  vanishes  if and only if 
 the spin$^c$ structure $\mathfrak{c}$ is the one  determined   by some  orientation-compatible 
almost-complex structure 
$J$ defined on all of $M$. If $b_+(M) \geq 2$,  any two regular-value choices of $\eta$ satisfying  \eqref{no-harm}
can  be joined by a smooth path $\eta (t)$ satisfying \eqref{no-harm} for all $t$, and such a path 
 then has an arbitrarily  small deformation that is transverse to the monopole map \eqref{raw}. Taking the inverse image
and modding out by $\mathscr{G}_0$ then gives a $1$-dimensional cobordism between $0$-dimensional 
moduli spaces associated with our two points. One may therefore define two invariants that are 
infinite-dimensional analogues of the un-oriented and oriented {degrees of a proper map}. The first  of these is thus  defined  \cite{KM} 
by just setting $n_\mathfrak{c}(M)\in \ZZ_2$ equal to $\# \mathfrak{M}_\mathfrak{c} (g, \eta ) \bmod 2$ when $(0,\eta, 0)$
 a regular value of \eqref{raw}.
The more sophisticated second version \cite{morgan} depends on first defining 
 a consistent  orientation of the moduli spaces, and then defines 
$\mathbf{SW}_\mathfrak{c}(M)\in \ZZ$ to be signed count of the points of the discrete set $\mathfrak{M}_\mathfrak{c} (g, \eta )$.
In either case, the resulting invariant is actually  metric-independent, because one can more generally 
construct cobordisms of the moduli spaces by  considering paths
$(g(t), \eta (t))$ where  the metric also varies. When the invariant  $n_\mathfrak{c}(M)$ or $\mathbf{SW}_\mathfrak{c}(M)$ 
 is non-zero, it then  follows that the Seiberg-Witten equations (\ref{drc}--\ref{sd}) must have a solution, relative to the
given spin$^c$ structure $\mathfrak{c}$,  for any metric $g$. 
Indeed, if there were no solution for a metric $g$ satisfying $[c_1(L)]^+\neq 0$,
then 
$\eta =0$ would satisfy \eqref{no-harm}, and the absence of solutions would then make $(0,0,0)$ 
  a  {\em regular value} of the monopole map \eqref{raw}; thus, the count of solutions would then 
say  that  $n_\mathfrak{c}(M)$ and $\mathbf{SW}_\mathfrak{c}(M)$  both vanished, thereby  contradicting our hypothesis. 
On the other hand, when  $[c_1(L)]^+= 0$ with respect to a given  metric $g$, 
we may  at least produce a {\em reducible} solution of the equations
by setting $\Phi\equiv 0$ and then  choosing $\theta$ so as to make  the $2$-form $F_\theta$ is harmonic; thus, the assertion 
remains  true in this ``bad'' case, albeit for trivial reasons.

The above discussion also    works reasonably well when $b_+(M)=1$, but  one must remember to  pay
 careful attention to a key additional  subtlety. 
 Indeed,  when $b_+(M) = 1$, the vector space 
 $\mathcal{H}^+_g$ is $1$-dimensional,  so   removing a point from $\mathcal{H}^+_g$ therefore disconnects 
it 
into  two open rays. 
Consequently, 
the self-dual $2$-forms $\eta$ satisfying \eqref{no-harm}
then fall into precisely  two connected components. 
Furthermore,  for each spin$^c$ structure $\mathfrak{c}$, 
the set of pairs $(g, \eta)$, where $g$ is a Riemannian metric, and where $\eta$ is a self-dual $2$-form
satisfying \eqref{no-harm} with respect  to $g$,  consists of   exactly two connected 
 components, 
$\sphericalangle^+$ and $\sphericalangle^-$,  called {\sf chambers}. 
Applying the previous discussion to each chamber  then produces  two distinct invariants
$n_{\mathfrak{c}} (M, \sphericalangle^\pm)\in \ZZ_2$, and two distinct invariants $\mathbf{SW}_\mathfrak{c} (M, \sphericalangle^\pm)\in \ZZ$; these are typically different, and an expression for their  difference  is then called a {\sf wall-crossing formula}. 
 As long as $[c_1(L)]^+\neq 0$ for  a given  metric $g$ and spin$^c$ structure $\mathfrak{c}$, the same  arguments
used when $b_+(M)\geq 2$  will then guarantee the existence of an irreducible solution of the ``unperturbed'' Seiberg-Witten equations 
(\ref{drc}--\ref{sd}) provided that 
 $n_\mathfrak{c}\neq 0$ or  $\mathbf{SW}_\mathfrak{c}\neq 0$  for the  chamber containing $\eta=0$.

 \begin{figure}[htb]
  \centerline{
 \includegraphics[scale=.9]{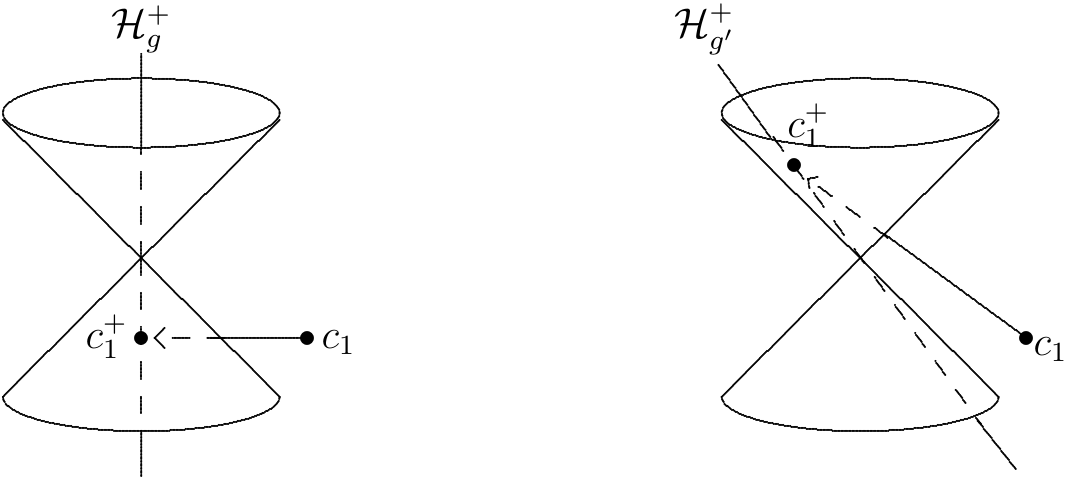}
 }
\caption{\label{cones}
When $b_+(M)=1$, the Seiberg-Witten invariant depends on a choice of {\em chamber}, and 
whether the invariant forces the  Seiberg-Witten equations (\ref{drc}--\ref{sd}) to admit an irreducible  solution 
hinges on whether
the self-dual-harmonic projection  $c_1^+= [c_1(L)]^+\in H^2(M, \RR)$ of  $c_1=c_1(L)$ is past-pointing or future-pointing. 
If $c_1^2(L) < 0$, the answer to this question  genuinely  depends on the metric.  
}
\end{figure}

To clarify this point, notice that if $b_+(M) =1$ and $b_-(M) \neq 0$, then $(H^2(M, \RR), \bullet )$ is essentially  
 a copy of $b_2(M)$-dimensional Minkowski space. The set of ``timelike'' cohomology classes $\alpha\in H^2(M, \RR)$ with
 $\alpha^2:= \alpha \bullet \alpha > 0$ is thus an open double cone consisting  of  two connected components, or {\em nappes}. 
 The choice of a ``time orientation'' for $(H^2(M, \RR), \bullet )$ then amounts to labeling one of these nappes, 
 henceforth denoted by  $\mathscr{C}^+$,  
 as the set of ``future-pointing'' time-like vectors, while  declaring that  the remaining  nappe  $\mathscr{C}^-$ 
consists of ``past-pointing'' time-like vectors.
The impact of the chamber-dependent  invariants on the unperturbed Seiberg-Witten equations (\ref{drc}--\ref{sd})
is  then entirely  governed by   the image
$[c_1(L)]^+\in \mathcal{H}_g$ of the first Chern class 
under Minkowski-space-orthogonal projection   to  $\mathcal{H}^+_g\subset H^2(M,\RR)$. 
Since $\mathcal{H}_g^+-\{ 0\}\subset \mathscr{C}^+ \cup \mathscr{C}^-$ for any Riemannian metric $g$, 
 the ``no reducible solutions'' condition $[c_1(L)]^+\neq 0$ implies 
  that either $[c_1(L)]^+\in \mathscr{C}^-$ or $[c_1(L)]^+\in \mathscr{C}^+$.  In the first case, the self-dual $2$-form
$\eta =0$ belongs to the chamber $\sphericalangle^+$ 
containing the open-ray component of $\mathcal{H}_g^+-\{ 0\}$ that terminates in $\mathscr{C}^+$,
while in the second case $\eta =0$ belongs to the chamber $\sphericalangle^-$
containing the open-ray component of $\mathcal{H}_g^+-\{ 0\}$ that terminates
 in $\mathscr{C}^-$. Thus, in order to use the invariant $n_\mathfrak{c}(M, \sphericalangle^\pm)$ or  $\mathbf{SW}_\mathfrak{c}(M, \sphericalangle^\pm)$ to predict the the existence of solutions to
 (\ref{drc}--\ref{sd}), one must simply  keep track of whether $[c_1(L)]^+$ is future-pointing or past-pointing for a given metric $g$.
 If $c_1^2(L) \geq 0$ and $c_1(L)$ is not a torsion class,    the answer to this question is  actually independent $g$. 
 However, when $c_1^2(L) <  0$, the answer  becomes dependent on the metric, as indicated by {\sf Figure \ref{cones}}. 
 Fortunately, this technical inconvenience 
  can  often be overcome    \cite{FM,lno} by carefully playing several spin$^c$ structures off against one another.

After  outlining  the  the definition of the Seiberg-Witten invariant in the $b_+(M)\geq 2$ case, 
Witten then  went on to argue that  the invariant is non-zero   when   $M$ is also the underlying smooth oriented $4$-manifold 
of a compact-complex surface  of K\"ahler type, equipped with the spin$^c$ structure determined by the 
complex structure. Recall \cite{bpv,nick,siu} that a compact complex surface $(M^4, J)$
admits compatible K\"ahler metrics iff its first Betti number $b_1(M)$ is even, and that this happens
iff $b_+(M)$ is odd. 
To gain some insight into Witten's claim regarding  K\"ahler manifolds, let us first prove a  general technical result \cite{lno} 
that indicates the sense 
 in which
the Seiberg-Witten equations generalize  key aspects of K\"ahler geometry to a general Riemannian 
setting:

\begin{proposition}\label{best}
Let $(M,g)$ be a smooth compact oriented Riemannian $4$-manifold,
let $\mathfrak c$ be a spin$^c$ structure on $M$, and let 
$c_1^+= [c_1(L)]^+$ denote the self-dual part of the harmonic $2$-form 
representing the first Chern class $c_1(L)$ of $\mathfrak c$. 
If   there is a  solution of the Seiberg-Witten equations (\ref{drc}--\ref{sd}) 
on $M$ for $g$ and  ${\mathfrak c}$, then the scalar curvature 
$s_{g}$ of $g$ satisfies 
$$
\int_{M}s_{g}^{2}d\mu_{g} \geq 32\pi^{2} [c_{1}^{+}]^{2} .
$$
Moreover, when $[c_1^+]\neq 0$,  equality can  occur only if $g$
is a K\"ahler metric of constant {negative} scalar curvature that is compatible with 
a complex structure $J$ such that $c_1(M,J) = c_1(L)$. 
\end{proposition}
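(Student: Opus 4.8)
The plan is to extract a pointwise differential inequality from the Weitzenböck formula \eqref{wnbk}, integrate it, and then use the Seiberg-Witten equation \eqref{sd} together with the Cauchy-Schwarz inequality to convert the resulting integral of $|\Phi|^4$ into a lower bound for $\|s_g\|_{L^2}$ in terms of the self-dual harmonic projection $c_1^+$.

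\medskip\noindent\textbf{Main estimate.} Starting from \eqref{wnbk}, integrate over $M$; the $\Delta|\Phi|^2$ term integrates to zero and the $|\nabla_\theta\Phi|^2$ term is non-negative, so we obtain $\int_M |\Phi|^4\, d\mu \leq -\int_M s|\Phi|^2\,d\mu \leq \int_M |s|\,|\Phi|^2\,d\mu$. Applying Cauchy-Schwarz to the right-hand side yields $\int_M |\Phi|^4 \leq \left(\int_M s^2\right)^{1/2}\left(\int_M |\Phi|^4\right)^{1/2}$, hence $\int_M |\Phi|^4\,d\mu \leq \int_M s^2\,d\mu$. Separately, equation \eqref{sd} gives $F_\theta^+ = i\sigma(\Phi)$, so by \eqref{radial} we have $|F_\theta^+|^2 = |\sigma(\Phi)|^2 = |\Phi|^4/8$; therefore $\int_M |\Phi|^4\,d\mu = 8\int_M |F_\theta^+|^2\,d\mu$. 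Finally, since $\frac{i}{2\pi}F_\theta$ is a closed $2$-form representing $c_1(L)$ in de~Rham cohomology, the self-dual part of its harmonic projection is $c_1^+$, and because the harmonic representative minimizes $L^2$-norm within its cohomology class while $\bullet$ restricted to $\mathcal{H}^+_g$ is positive-definite, one gets $\int_M |F_\theta^+|^2\,d\mu \geq 4\pi^2 [c_1^+]^2$. Chaining these: $\int_M s^2\,d\mu \geq \int_M |\Phi|^4\,d\mu = 8\int_M |F_\theta^+|^2\,d\mu \geq 32\pi^2 [c_1^+]^2$.

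\medskip\noindent\textbf{The equality case.} This is the step I expect to be the real obstacle, as it requires tracking when each inequality is saturated and then reconstructing Kähler geometry from that data. Equality forces: (i) $\nabla_\theta\Phi \equiv 0$, so $\Phi$ is a parallel section of $\mathbb{V}_+$, hence $|\Phi|^2$ is constant; (ii) equality in Cauchy-Schwarz, meaning $|s|$ is a constant multiple of $|\Phi|^2$, hence $s$ has constant absolute value, and since $|\Phi|^2 > 0$ (as $c_1^+\neq 0$ forces irreducibility, $\Phi\not\equiv 0$, and then \eqref{wnbk} with $\nabla_\theta\Phi=0$ gives $s|\Phi|^2 + |\Phi|^4 = 0$ pointwise, so $s \equiv -|\Phi|^2 < 0$ is a negative constant); (iii) $F_\theta^+$ is itself harmonic (its $L^2$-norm equals that of its harmonic projection). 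The parallel spinor $\Phi$ defines, via $\sigma(\Phi)/|\sigma(\Phi)|$, a parallel unit self-dual $2$-form $\omega/\sqrt{2}$, which by the discussion around \eqref{presigma}--\eqref{ahas} corresponds to a parallel almost-complex structure $J$ compatible with $g$ and the orientation; a parallel $\omega$ means $(M,g,J)$ is Kähler. From \eqref{sd} and \eqref{clef} one checks that $F_\theta = F_\theta^+$ is a constant multiple of the Kähler form (using that $\Phi$ being parallel pins down $\sigma(\Phi)$ proportional to $\omega$), which forces $c_1(L)$ to be a multiple of the Kähler class and, after matching normalizations via \eqref{lineup} and \eqref{lift} (so $c_1(L) = c_1(\mathbb{V}_+) = c_1(M,J)$ for the spin$^c$ structure induced by $J$), yields $c_1(M,J) = c_1(L)$.

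\medskip\noindent The genuinely delicate points are verifying that a nonzero parallel section of $\mathbb{V}_+$ really does produce an integrable complex structure (parallelism of $\omega$ gives this directly, so the issue is just bookkeeping of the Clifford-algebra identifications \eqref{canine}--\eqref{clef}) and confirming the identification $c_1(L) = c_1(M,J)$ rather than $c_1(L) = -c_1(M,J)$ or some other twist; here one uses that the spin$^c$ structure determined by the parallel $J$ is the \emph{same} one we started with, because $\Phi$ is a parallel—hence nowhere-vanishing—section of $\mathbb{V}_+$, so the almost-complex structure built from it on the punctured manifold extends over $M$ and is exactly $J$. I would also remark that the argument shows such an Einstein-like rigidity conclusion is consistent with Witten's result that $\mathbf{SW}_\mathfrak{c}\neq 0$ for Kähler surfaces of general type, which is precisely the class of manifolds where this sharp bound has teeth.
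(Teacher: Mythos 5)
Your proof is correct and follows essentially the same route as the paper: integrate the Weitzenb\"ock identity \eqref{wnbk}, apply Cauchy--Schwarz to $\int(-s)|\Phi|^2\,d\mu$, use \eqref{radial} and \eqref{sd} to convert $\int|\Phi|^4$ into $8\int|F_\theta^+|^2$, and then bound that below by $32\pi^2[c_1^+]^2$ via the $L^2$-orthogonality of $iF_\theta^+-2\pi c_1^+$ to the harmonic self-dual forms. In the equality case you actually go a bit further than the paper's written proof (which stops at ``$\sigma(\Phi)$ parallel, $s$ a non-positive constant''), correctly unwinding the parallel spinor into a parallel K\"ahler form and confirming $c_1(L)=c_1(M,J)$ via the nowhere-vanishing section of $\mathbb{V}_+$; your only slightly loose phrasing is ``the harmonic representative minimizes $L^2$-norm,'' which should really be stated as the orthogonality $\langle c_1^+,(iF_\theta^+-2\pi c_1^+)\rangle_{L^2}=0$, exactly as the paper does.
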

\begin{proof}
Integrating the Weitzenb\"ock formula (\ref{wnbk}), we have
$$0= \int [ 4|\nabla \Phi |^2 + s|\Phi|^2 + |\Phi|^4 ] d\mu , $$
and it follows that 
$$\int (-s) |\Phi|^2 d\mu \geq \int |\Phi|^4 d\mu .$$
Applying the Cauchy-Schwarz inequality to the
left-hand side therefore yields  
$$
\left(\int s^2 d\mu  \right)^{1/2}\left(\int |\Phi |^4 d\mu  \right)^{1/2} \geq \int |\Phi |^4 d\mu .
$$
We therefore have  
$$
\int s^2 d\mu \geq \int |\Phi |^4 d\mu  = 8 \int |F_{\theta}^{+}|^2 d\mu ,
$$ 
and   the inequality is strict unless  $\nabla_\theta \Phi \equiv 0$ and $s$ is a non-positive constant. 
However, $F_{\theta}^{+}-2\pi c_1^+$  is an exact form plus a co-exact form, and 
so is $L^2$-orthogonal to the harmonic  forms. This gives us
the inequality 
$$\int |F_{\theta}^{+}|^2 d\mu \geq 4\pi^2 \int |c_1^+|^2 d\mu = 4\pi^2 \int c_1^+\wedge c_1^+,$$
and the last expression may be re-interpreted as the intersection pairing 
$[c_1^+]^2$ 
of 
the de Rham class of $c_1^+$ with itself. This gives us the desired
inequality
$$\int s^2 d\mu \geq 32\pi^2 [c_1^+]^2,$$
and, when the right-hand side is non-zero, equality can only happen if  
$\sigma (\Phi )$  is parallel 
and $g$  has constant negative scalar curvature.  
\end{proof}

Conversely, suppose that $(M,g,J)$ is a compact K\"ahler surface of 
constant negative scalar curvature; and let  $\omega = g(J\cdot , \cdot )$ denote the corresponding K\"ahler form. 
For the  spin$^c$ structure
$\mathfrak{c}$ determined by $J$, one then has 
$$\mathbb{V}_+= \Lambda^{0,0} \oplus \Lambda^{0,2}, \quad \mathbb{V}_- =\Lambda^{0,1},$$
and the spin$^c$ Dirac
operator $\dir_\theta$ determined by the Chern connection $\theta$ on the 
anti-canonical line bundle $K^{-1}=L=\Lambda^{0,2}$ is just $$\sqrt{2} (\bar{\partial} + \bar{\partial}^*): \Lambda^{0,0} \oplus \Lambda^{0,2}\to \Lambda^{0,1}.$$
Moreover, the bundle of real self-dual $2$-forms of $g$ is just 
$$\Lambda^+ = \RR \, \omega \oplus \Re e (\Lambda^{0,2})$$
while   $\sigma : \mathbb{V}_+\to \Lambda^+$ is exactly given by 
$$\sigma (f, \phi) =  (|f|^2 - |\phi|^2 ) \frac{\omega}{4} + \Im m (\bar{f}\phi).$$
Because the curvature of $(K^{-1} , \theta )$ is exactly $-i\rho$, where the Ricci form 
$\rho$ has self-dual part $\rho^+ = \frac{s}{4}\omega$, 
it therefore follows that $\Phi = (\sqrt{-s} , 0)$ and the Chern connection $\theta$ together solve the Seiberg-Witten equations
(\ref{drc}--\ref{sd}). Moreover, because the Ricci form $\rho$ of any constant-scalar-curvature  K\"ahler metric $g$
is harmonic, the harmonic representative of $c_1= c_1(L)$ is $\frac{\rho}{2\pi}$  in the present setting, and 
its self-dual piece is therefore  $c_1^+ = \frac{s}{8\pi} \omega$. It follows that any Seiberg-Witten solution
exactly saturates the inequalities in the proof of Proposition \ref{best}, so that $\Phi$ must be parallel with respect 
to $\theta$. This then implies that any  solution is gauge-equivalant to the explicit one we have displayed. 
Moreover, direct calculation \cite{lmo} shows that the derivative of the monopole map \eqref{raw}  is surjective at our explicit solution. 
This shows that  $\mathbf{SW}_\mathfrak{c}(M)=\pm 1$ and $n_\mathfrak{c}(M)\neq 0$ if
$b_+(M) > 1$, while $\mathbf{SW}_\mathfrak{c}(M,\sphericalangle^+)=\pm 1$ and $n_\mathfrak{c}(M,\sphericalangle^+)\neq 0$ 
for the  appropriate chamber $\sphericalangle^+$ if $b_+(M)=1$.

As a consequence,  we therefore obtain the  following result \cite{lmo,lno}:

\begin{theorem} \label{optimal} 
Let $(M^4,J)$ be a compact complex surface that admits a compatible K\"ahler-Einstein metric $g$
with  scalar curvature $s<0$. Then
 $$\mathcal{I}_s (M) = 32\pi^2 \, c_1^2(M,J) >  0$$ and 
 $$\mathscr{Y}(M)= - 4\pi \sqrt{2\, c_1^2(M,J)} <0$$ are both achieved by $g$.
 Moreover, 
if   $g^\prime$ is a Riemannian metric on $M$ that achieves  one of these critical values for the relevant Riemannian 
functional, then $g^\prime$ 
is also K\"ahler-Einstein, and is compatible with an  integrable complex structure $J^\prime$ such that 
$c_1(M, J^\prime ) = c_1 (M, J) \in H^2(M, \ZZ)$. 
\end{theorem}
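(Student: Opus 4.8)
The plan is to assemble Theorem \ref{optimal} from the two halves of the preceding discussion: the lower bound from Proposition \ref{best}, and the matching upper bound from the explicit K\"ahler-Einstein solution. First I would observe that since $g$ is K\"ahler-Einstein with $s<0$, it is in particular a constant-negative-scalar-curvature K\"ahler metric, so the calculation just carried out (for the spin$^c$ structure $\mathfrak c$ determined by $J$) shows $\mathbf{SW}_\mathfrak{c}(M)\neq 0$ when $b_+(M)>1$, and $\mathbf{SW}_\mathfrak{c}(M,\sphericalangle^+)\neq 0$ in the appropriate chamber when $b_+(M)=1$; in either case the Seiberg-Witten equations (\ref{drc}--\ref{sd}) admit a solution for \emph{every} metric $g'$ on $M$. (Here one needs $b_+(M)\geq 1$, which holds because K\"ahler type forces $b_+$ odd, hence positive; in the $b_+=1$ case one also invokes that $c_1^2(L)=c_1^2(M,J)>0$ so that the past/future-pointing alternative is metric-independent and the relevant chamber really does control the unperturbed equations for all $g'$.) Applying Proposition \ref{best} to an arbitrary metric $g'$ then yields $\int_M s_{g'}^2\,d\mu_{g'}\geq 32\pi^2[c_1^+]^2$, and since $c_1^2(L)=c_1^2(M,J)>0$ with $c_1(L)$ of positive square, the self-dual projection satisfies $[c_1^+]^2\geq c_1^2(L)=c_1^2(M,J)$ by the reverse Cauchy-Schwarz inequality on Minkowski space $(H^2(M,\RR),\bullet)$ (the anti-self-dual part contributes non-positively, and on a timelike class the self-dual part carries all of the positivity). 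Hence $\mathfrak S(g')=\int_M s_{g'}^2\,d\mu_{g'}\geq 32\pi^2 c_1^2(M,J)$ for all $g'$, giving $\mathcal I_s(M)\geq 32\pi^2 c_1^2(M,J)$.

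Next I would check that $g$ itself attains this bound. Using the explicit solution $\Phi=(\sqrt{-s},0)$ with the Chern connection, and the identity $s=\mathrm{const}<0$, one computes $\int_M s^2\,d\mu = s^2\,\Vol_g(M)$; on the other hand $c_1^+=\frac{s}{8\pi}\omega$ forces $[c_1^+]^2=\frac{s^2}{64\pi^2}\int_M\omega\wedge\omega=\frac{s^2}{32\pi^2}\Vol_g(M)$ (since $\frac{\omega^2}{2}=d\mu_g$), so $32\pi^2[c_1^+]^2=s^2\Vol_g(M)=\int_M s^2\,d\mu$; and because $K$\"ahler-Einstein metrics are Einstein with $s\neq 0$, one also has $c_1^2(M,J)=[c_1^+]^2$ (the anti-self-dual part of the harmonic Ricci form vanishes in the Einstein case). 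Thus $\mathfrak S(g)=32\pi^2 c_1^2(M,J)$, proving $\mathcal I_s(M)=32\pi^2 c_1^2(M,J)$ and that $g$ is a minimizer. The Yamabe statement then follows from \eqref{consign}: since $\mathcal I_s(M)=|\mathscr Y(M)|^{n/2}=|\mathscr Y(M)|^2$ with $n=4$ and $\mathscr Y(M)\leq 0$ (as $\mathcal I_s(M)>0$), we get $\mathscr Y(M)=-\sqrt{32\pi^2 c_1^2}=-4\pi\sqrt{2\,c_1^2(M,J)}$; and $g$, being Einstein, is a Yamabe metric by Obata's theorem, with $\mathscr E(M,g)$ equal to this value by the equality case of \eqref{uptick}--\eqref{downtick} for constant negative scalar curvature, so $g$ realizes $\mathscr Y(M)$ as well.

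For the rigidity clause, suppose $g'$ achieves $\mathcal I_s(M)$ or $\mathscr Y(M)$. By \eqref{consign} (equivalently by the equality case of \eqref{downtick}) any Yamabe-minimizing metric in the negative case has constant negative scalar curvature, so in either case we may assume $s_{g'}=\mathrm{const}<0$ and $\int_M s_{g'}^2\,d\mu_{g'}=32\pi^2 c_1^2(M,J)$. Since $g'$ admits a Seiberg-Witten solution for $\mathfrak c$, Proposition \ref{best} applies, and the hypothesis forces equality throughout \emph{both} inequalities used there: equality in the reverse-Cauchy-Schwarz step forces $[c_1(L)]^-=0$ so that $[c_1^+]^2=c_1^2(M,J)$, and equality in the analytic step of Proposition \ref{best} forces $\nabla_\theta\Phi\equiv 0$, so $\sigma(\Phi)$ is parallel and $g'$ has constant negative scalar curvature; a nonzero parallel self-dual $2$-form on $(M,g')$ then defines (after normalization) a $g'$-compatible integrable complex structure $J'$ with K\"ahler form proportional to $\sigma(\Phi)$, and the equation $iF_\theta^+=\sigma(\Phi)$ together with $[c_1^+]\neq 0$ identifies $c_1(M,J')$ with $c_1(L)=c_1(M,J)$ in $H^2(M,\ZZ)$. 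Finally, feeding $s_{g'}=\mathrm{const}$ and the K\"ahler condition back into the Einstein equation (or invoking that a constant-scalar-curvature K\"ahler metric with harmonic self-dual Chern form saturating $\int s^2 = 32\pi^2 c_1^2$ must have its traceless Ricci part vanish) shows $g'$ is K\"ahler-Einstein. The main obstacle in this last step is the passage from ``$\sigma(\Phi)$ parallel and $g'$ constant-scalar-curvature'' to ``$g'$ Einstein'': I would handle it by noting that a parallel self-dual $2$-form reduces the holonomy to $\mathbf U(2)$, making $g'$ K\"ahler, and then that for a K\"ahler metric the saturation of the Seiberg-Witten/curvature estimate pins the Ricci form to be a constant multiple of $\omega$, which is precisely the Einstein condition.
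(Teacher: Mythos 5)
Your proposal is correct and follows essentially the same route as the paper: establish that the Seiberg-Witten equations have a solution for the spin$^c$ structure determined by $J$ and every metric (handling the $b_+=1$ case via $c_1^2 > 0$), invoke Proposition \ref{best} plus $[c_1^+]^2 \geq c_1^2$ to get the lower bound $\mathfrak{S}(g')\geq 32\pi^2\,c_1^2(M,J)$, verify that the given K\"ahler-Einstein metric attains it, deduce the Yamabe statement from \eqref{consign}, and obtain rigidity from the equality case of Proposition \ref{best} together with the observation that $[c_1^+]^2 = c_1^2$ forces the Ricci form of $g'$ to be self-dual. You add some helpful detail the paper compresses — explicitly checking that $g$ attains the bound, and spelling out why a parallel self-dual $2$-form reduces holonomy to $\mathbf{U}(2)$ and makes $g'$ K\"ahler — but the logical skeleton and all the key lemmas are identical.
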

\begin{proof} Since $(M,J)$ is of K\"ahler type, $b_+ = 1 + h^{2,0}(M,J)\geq 1$,
and the above discussion of Seiberg-Witten invariants therefore applies. 
On the other hand, the K\"ahler-Einstein metric $g$ has Ricci-form $\rho = \frac{s}{4}\omega$, and hence 
$$c_1^2 (M, J) = \frac{1}{4\pi^2} [\rho ]^2 =  \frac{s^2}{64 \pi^2} [\omega ]^2 > 0.$$ Thus, for the 
spin$^c$ structure determined by $J$, the mod-$2$ count  of solutions of (\ref{drc}--\ref{sd}) 
mod gauge is metric independent, even if $b_+ = 1$. However, since $g$ is K\"ahler, with constant negative
scalar curvature, the   count of solutions is $1\bmod 2$ for $g$ and the spin$^c$ structure determined by $J$, so
 there must therefore  be a solution for any 
other metric. Proposition \ref{best} therefore tells us that 
\begin{equation}
\label{amazing}
\int s^2d\mu \geq 32\pi^2 [c_1^+]^2 \geq 32\pi^2 c_1^2(M, J)
\end{equation}
for any Riemannian metric $g^\prime$ on $M$. Thus
$$\mathcal{I}_s (M) = \inf_{g^\prime}  \mathfrak{S} (g^{\prime}) = 32\pi^2 c_1^2(M, J)$$
and the infimum is moreover attained by $g$. Since this means, in particular, that $\mathcal{I}_s (M) > 0$,
we therefore have
$$\mathscr{Y}(M) = - \sqrt{\mathcal{I}_s (M)}= -4\pi \sqrt{ 2\, c_1^2 (M,J)} < 0.$$
Moreover, if $g^\prime$ saturates \eqref{amazing}, then $g^\prime$ is a K\"ahler metric of constant negative
scalar curvature by Proposition \ref{best}; and because $[c_1^+]^2= c_1^2(M,J)$, 
the Ricci form of $g^\prime$, which is the 
harmonic representative of $2\pi c_1$ with respect to  this metric, must also  
be self-dual,  thus  implying  that $g^\prime$ is    K\"ahler-Einstein. \end{proof}

Theorem \ref{option} is now an immediate consequence. Indeed, consider the complex  surfaces
$M_\ell  \subset \CP_3= \{ [ z_0 : z_1 : z_2 : z_3 ]~|~z_j \in \CC \} $ defined by 
$$z_0^\ell + z_1^\ell + z_2^\ell + z_3^\ell =0.$$
Each is  the zero set of a  section of a positive line bundle, namely  $\mathcal{O}(\ell)$,  that is transverse to the zero section, 
so each is 
simply connected by  Lefschetz's theorem on  hyperplane sections. On the other hand, the canonical line bundle
of $M_\ell$ is exactly $\mathcal{O} (\ell - 4)$ by the adjunction formula, and so is ample if $\ell > 4$.
The Aubin-Yau theorem \cite{aubin,yau} therefore implies that for each $\ell \geq 5$, the complex surface  $M_\ell$
admits a negative-scalar-curvature
compatible K\"ahler-Einstein metric,
and Theorem \ref{optimal} therefore tells us that 
$$\mathcal{I}_s (M_\ell) = 32\pi^2 c_1^2(M_\ell ) = 32\pi^2 \ell (\ell - 4)^2, \qquad \forall \ell \geq 5,$$
thus providing a sequence of $4$-manifolds for which $\mathcal{I}_s\to + \infty$, as claimed. 
Moreover, for the sequence $4$-manifolds we have just displayed, 
 the infimum $\mathcal{I}_s = \inf \mathfrak{S}$ is achieved in each instance by an 
Einstein metric (that, for these purposes,  just happens to be K\"ahler).

An important  consequence of this discussion is that the Yamabe invariant can often be used to  distinguish between 
different smooth structures on a fixed topological $4$-manifold. For example, consider the 
underlying smooth compact oriented $4$-manifolds represented by the above complex surfaces $M_\ell$.
These manifolds are simply connected and have
$$b_+(M_\ell ) = \frac{(\ell -1) (\ell -2) (\ell -3) }{3} +  1 , \quad b_-(M_\ell) = \frac{(\ell + 2)\ell (\ell -2)}{3} + b_+(M) .$$
However,  a deep theorem due to  Freedman \cite{freedman}  implies that two compact connected oriented {\em smooth}  
$4$-manifolds are orientedly {\em homeomorphic} if and only if they have the same invariants $b_+$ and $b_-$, and  either 
both  are spin, or both  are non-spin. For  $\ell =2k+1$ odd, with $k\geq 2$, this for instance tells us  that
the negative-Yamabe-invariant non-spin $4$-manifold $M_{2k+1}$  is 
 homeomorphic to the positive-Yamabe-invariant connected sum 
${\zap m} \CP_2 \# {\zap n}\overline{\CP}_2$,  where
  ${\zap m}= 1+ \frac{4}{3}k(k-1)(2k-1)$,  and ${\zap n}= \frac{2}{3}k(8k^2+1)$. Similarly, when   $\ell=2k$ is even and $k\geq 3$, 
the negative-Yamabe-invariant spin manifold  $M_{2k}$ 
 is  homeomorphic to the zero-Yamabe-invariant\footnote{The fact that these connected sums have $\mathscr{Y}\leq 0$ follows from the fact that they  are spin 
and have non-zero $\widehat{A}=-\tau/8$. The fact that they have $\mathscr{Y}\geq 0$ follows from Petean's surgery lemma \cite{jp3}, together with the fact that $\mathsf{K3}$ admits a  Ricci-flat  (and hence scalar flat) metric by Yau's solution of the Calabi conjecture \cite{yau,yauma}.}
 connected sum ${\zap p} \mathsf{K3} \# {\zap q} (S^2\times S^2)$, where 
${\zap p} = {k+1\choose 3}$,  ${\zap q}=  \frac{1}{6}(k-2)(13k^2-22k+3)$,  and where 
$\mathsf{K3} := M_4$ is one of the essential building blocks of $4$-dimensional topology. 

While the role of the scalar curvature in this story was a stunning consequence of the advent 
of Seiberg-Witten theory,  it is worth remembering that    it was previously  known that 
  the complex surfaces $M_\ell$, $\ell \geq 4$, were certainly not diffeomorphic to the connect sums considered above;
  this had been  proved by Donaldson  \cite{don}, using  new  polynomial  invariants that  he had defined using 
moduli spaces of Yang-Mills instantons.  It should also be noted Donaldson's Yang-Mills-based     thesis result \cite{donaldson}
 is actually needed    to streamline the smooth-manifold case  of 
 Freedman's theorem into the user-friendly   statement given above.  Witten's ostensible justification 
 for introducing the Seiberg-Witten equations in the first place was a physics argument indicating that 
 his new theory should encode exactly same information as the Donaldson polynomials. 
 While  partial results \cite{feele} and large amounts of practical experience strongly  indicate that  Witten's claim  is actually  true, 
the intuition linking the two theories continues to largely elude the mathematical community. 
 In particular, 
 the Yang-Mills equations, unlike the Seiberg-Witten equations, are {\em conformally invariant},
 and so, at least locally, are utterly insensitive to the scalar curvature!

Now, ``most'' compact complex surfaces of K\"ahler type  are   deformation-equivalent to 
surfaces which admit compatible K\"ahler metrics of constant negative scalar curvature \cite{arpa1,yujen},
so citing this fact  while  appealing to the converse of Proposition \ref{best}  proved  in the discussion above  provides a 
correct-but-inefficient way of showing  
that   complex surfaces of K\"ahler type  typically carry non-trivial Seiberg-Witten invariants. However, 
one can easily prove more by instead just considering well-chosen  perturbations of the Seiberg-Witten equations
for an arbitrary K\"ahler metric. Indeed, if $t$ is any positive constant,
then $\Phi = (t,0)$ and the Chern connection $\theta$ together solve 
\eqref{drc} and \eqref{ptsd} for the perturbation 
 $\eta = \frac{t^2 + s}{4}\omega$, and the linearization of the monopole map is moreover 
 surjective at this solution; 
careful inspection of   \eqref{pwbk} then shows \cite{spccs} that, up to gauge equivalence, this is
the unique solution of \eqref{drc} and \eqref{ptsd}  for this specific $\eta$. When $b_+(M) > 1$,
this shows that  $\mathbf{SW}_\mathfrak{c} (M)=\pm 1$  on any K\"ahler-type complex surface, and hence that 
$\mathfrak{n}_\mathfrak{c} (M)= 1\bmod 2$, 
where  $\mathfrak{c}$ is or the the spin$^c$ structure  determined by the complex stucture 
by $J$;  when $b_+(M) = 1$, one similarly concludes that  $\mathbf{SW}_\mathfrak{c} (M,\sphericalangle^+)=\pm 1$,
and hence that $\mathfrak{n}_\mathfrak{c} (M,\sphericalangle^+)= 1 \bmod 2$, 
for the chamber $\sphericalangle^+$ containing large positive multiples of the K\"ahler form $\omega$. In particular, 
when $b_+(M) > 1$, there are solutions of the  unperturbed equations (\ref{drc}--\ref{sd}) for any metric on $M$ 
for this specific spin$^c$ structure; when $b_+(M)=1$, one   instead gets solutions of the unperturbed equations
whenever the self-dual projection $c_1^+$ of $c_1(M,J)$ belongs to the nappe $\mathcal{C}^-\subset H^2(M, \RR)$ 
that {\sf does not} contain the K\"ahler class $[\omega ]$ of a reference K\"ahler metric on $(M,J)$. 

One of the fundamental operations of complex surface theory \cite{bpv,GH} is the {\em blow-up} operation, 
which replaces a point of a complex surface $N$ with a $\CP_1$ of normal bundle $\mathcal{O}(-1)$; this then produces 
a new complex surface $M$ that is diffeomorphic to  $N \# \overline{\CP}_2$, where 
$ \overline{\CP}_2$ is the oriented manifold obtained from $\CP_2$ by reversing 
its orientation. Conversely, any complex surface $M$ containing a $\CP_1$ of normal bundle $\mathcal{O}(-1)$
can be ``blown down'' to produce a complex surface $N$ such that $M$ becomes  its blow-up. 
This operation can in principle be iterated, but the process must terminate after finitely many steps, 
because each blow-down decreases $b_2$ by $1$. When  a complex surface
$X$ cannot be blown-down, it is called {\em minimal}, and the upshot is that any
complex surface $M$ can be obtained from a minimal complex surface\footnote{Unfortunately, this means that 
 the term {\em minimal surface} is  widely used by algebraic geometers 
 to mean something that has nothing whatsoever to do with soap bubbles!} 
$X$ by blowing up finitely many times. In this situation,  one then says that $X$ is a {\em minimal model} of $M$. 

With this said, we are  now ready to introduce one of the most important complex-analytic 
 invariants of a compact complex surface, namely its {\em Kodaira dimension}  \cite{bpv,GH}.
This is defined in terms of positive powers of the canonical line bundle $K:=\Lambda^{2,0}$, 
and is given by 
$$\kod (M, J) = \limsup_{j \to +\infty}  \frac{\log \dim H^0 (M,K^{\otimes j}))}{\log j}.$$
The only possible values of this invariant are $-\infty$, $0$, $1$ and $2$, because 
the Kodaira dimension is actually  just the 
 largest complex dimension\footnote{In this context,  we conventionally set $\dim \varnothing :=-\infty$.} 
of the image of $M\dasharrow \mathbb{P} [ H^0 (M,K^{\otimes j}))]^*$ under all the various  ``pluricanonical''  maps 
associated with the  line bundles $K^{\otimes j}$, $j \in \ZZ^+$.
Blowing up or down always  leaves the Kodaira dimension unchanged, and the minimal model of a complex surface is moreover 
{\em unique} whenever  $\kod \neq -\infty$.
As an illustration, for  the surfaces $M_\ell \subset  \CP_3$ discussed above, it is not hard to show that 
$$
\kod (M_\ell )  = \left\{
\begin{array}{ll} -\infty  &\mbox{if  }  \ell \leq 3\\
~ 0 &\mbox{if } \ell = 4  \\
 ~  2 &\mbox{if } \ell \geq 5.
\end{array}\right.
$$ 
This sequence of examples begins to explain why   complex surfaces with   $\kod = 2$ are said to be 
of {\em general type}. Notice that, among these specific examples,  the ones of general type are exactly those for which
seen that the Yamabe invariant  is also negative. This is simply  the first glimmer   of  the following   general pattern \cite{lky}:

\begin{theorem} \label{bacalao} 
Let $M$ be the  smooth 4-manifold underlying a
 compact complex  surface $(M^4,J)$ with $b_1(M)$ even.  
Then 
\begin{eqnarray*}
\mathscr{Y}(M) > 0 &\Longleftrightarrow& \kod (M,J) = -\infty \\
\mathscr{Y}(M) = 0 &\Longleftrightarrow& \kod (M,J) = 0 \mbox{ or } 1 \\
  \mathscr{Y}(M)<  0 &\Longleftrightarrow& \kod (M,J) = 2.
  \end{eqnarray*}
\end{theorem}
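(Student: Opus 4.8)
The plan is to exploit that both the Kodaira dimension and the sign of $\mathscr{Y}$ partition the compact complex surfaces of K\"ahler type into three mutually exclusive, jointly exhaustive classes, so that it suffices to prove the three implications ``$\kod(M,J)=-\infty \Rightarrow \mathscr{Y}(M)>0$'', ``$\kod(M,J)\in\{0,1\} \Rightarrow \mathscr{Y}(M)=0$'', and ``$\kod(M,J)=2 \Rightarrow \mathscr{Y}(M)<0$''; the reverse implications then come for free. For the first, I would invoke the Enriques--Kodaira classification, according to which $(M,J)$ is rational or ruled, so that $M$ is diffeomorphic to a blow-up of either $\CP_2$ or an $S^2$-bundle over a compact Riemann surface. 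Each such minimal model carries a positive-scalar-curvature metric --- the Fubini--Study metric on $\CP_2$, and on an $S^2$-bundle a metric obtained by first dilating the base enough to dominate the connection contribution to O'Neill's scalar-curvature formula and then shrinking the round fibres --- and positive scalar curvature is preserved under the connected sums with $\overline{\CP}_2\cong\CP_2$ that turn a minimal model into $M$, by the Gromov--Lawson--Schoen--Yau surgery theorem. Hence $M$ admits such a metric, so $\mathscr{Y}(M)>0$ by \eqref{positivity}.

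For $\kod(M,J)=2$, I would let $X$ be the unique minimal model of $M$, a minimal surface of general type with $c_1^2(X)=K_X^2>0$. The discussion preceding Theorem \ref{optimal} shows that the spin$^c$ structure on $X$ induced by its complex structure has non-vanishing Seiberg--Witten invariant (unambiguously so even when $b_+(X)=1$, since $K_X^2>0$ makes the relevant chamber metric-independent), and Proposition \ref{best} then gives $\int_X s_g^2\, d\mu_g\ge 32\pi^2\, c_1^2(X)$ for every metric $g$ on $X$. To pass to $M=X\#k\,\overline{\CP}_2$, I would combine the blow-up formula for Seiberg--Witten invariants with an averaging over the $2^k$ spin$^c$ structures with $c_1 = K_X \pm E_1 \pm\cdots\pm E_k$: these all have non-vanishing invariant, and, since the $E_i$ are orthogonal classes of square $-1$ orthogonal to $K_X$, the average of $[c_1^+]^2_g$ over the sign choices works out to be at least $c_1^2(X)$, so some choice satisfies $[c_1^+]^2_g\ge c_1^2(X)>0$. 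Proposition \ref{best} then yields $\int_M s_g^2\, d\mu_g\ge 32\pi^2\, c_1^2(X)>0$ for every $g$, whence $\mathcal{I}_s(M)>0$ and so $\mathscr{Y}(M)=-\sqrt{\mathcal{I}_s(M)}<0$ by \eqref{consign}.

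For $\kod(M,J)\in\{0,1\}$ I would establish $\mathscr{Y}(M)=0$ by proving the two inequalities separately. For $\mathscr{Y}(M)\ge 0$, inequality \eqref{downtick} gives $Y(M,[g])\ge-\bigl(\int_M s_g^2\, d\mu_g\bigr)^{1/2}$ for every metric $g$, so it is enough to exhibit metrics $g_j$ on $M$ with $\int_M s_{g_j}^2\, d\mu_{g_j}\to 0$. If $\kod(M,J)=0$, the minimal model $X$ has $c_1=0$ in $H^2(X;\RR)$, so Yau's solution of the Calabi conjecture supplies a Ricci-flat --- hence scalar-flat --- K\"ahler metric on $X$, giving $\mathscr{Y}(X)\ge 0$; Kobayashi's inequality \eqref{kobble} together with $\mathscr{Y}(\overline{\CP}_2)\ge 0$ then carries $\mathscr{Y}\ge 0$ over to the blow-up $M$. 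If $\kod(M,J)=1$, the minimal model $X$ is properly elliptic, and I would construct the required almost-scalar-flat metrics by collapsing the torus fibres of its elliptic fibration $X\to C$ while simultaneously rescaling the base so as to keep the O'Neill curvature terms under control, again transporting the bound to $M$ via \eqref{kobble}. For the reverse inequality $\mathscr{Y}(M)\le 0$ --- equivalently, by \eqref{positivity}, the non-existence of a positive-scalar-curvature metric on $M$ --- I would argue as in the $\kod=2$ case: a K\"ahler surface with $\kod\ge 0$ has a non-trivial Seiberg--Witten invariant --- in a suitable chamber when $b_+=1$ --- for its canonical spin$^c$ structure, and, by the blow-up formula, for appropriate spin$^c$ structures on any blow-up, so that a hypothetical metric with $s>0$ would, via the a priori bound \eqref{squeeze} (which forces $\Phi\equiv 0$) and \eqref{no-harm}, make a small generic perturbed moduli space empty, contradicting non-triviality. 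The exceptional cases here are the Enriques and bielliptic surfaces, whose torsion canonical classes make the $b_+=1$ chamber discussion degenerate; for those I would instead use the $b_+=1$ wall-crossing formula, or simply observe that they are finitely covered by a $\mathsf{K3}$ surface and a complex $2$-torus respectively, neither of which admits a positive-scalar-curvature metric (Lichnerowicz's theorem; enlargeability), so that pulling a hypothetical such metric back along the finite covering is absurd.

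I expect the main obstacles to be the non-minimal sub-cases with $b_+=1$, where $c_1$ may have negative self-intersection so that the Seiberg--Witten invariant is genuinely chamber-dependent (as in Figure \ref{cones}) and forcing the existence of a monopole for an arbitrary metric requires a careful interplay among several spin$^c$ structures rather than a single one, together with the construction of the almost-scalar-flat metrics on properly elliptic surfaces underlying the inequality $\mathscr{Y}(M)\ge 0$ in the $\kod=1$ case; both of these are where the sketch above would have to be replaced by substantial argument.
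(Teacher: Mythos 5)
Your proposal follows essentially the same route as the paper's cited proof (from \cite{lky}): Enriques--Kodaira classification plus Gromov--Lawson--Schoen--Yau surgery for $\kod = -\infty$; Seiberg--Witten theory and Proposition~\ref{best} (through a blow-up averaging argument) for $\kod = 2$; and for $\kod \in \{0,1\}$ a combination of collapsing constructions for $\mathcal{I}_s = 0$ with Seiberg--Witten nonexistence of positive-scalar-curvature metrics, handling the torsion-canonical-class surfaces by passing to their $\mathsf{K3}$ or torus finite covers. Two points where your sketch differs in detail from the paper are worth noting. First, for blow-ups in the $\kod = 2$ case you average $[c_1^+]^2_g$ over all $2^k$ spin$^c$ structures $K_X \pm E_1 \pm \cdots \pm E_k$, whereas the paper's sketch of Theorem~\ref{genotype} uses only the two classes $K_X \pm (E_1 + \cdots + E_k)$, related by a self-diffeomorphism acting as $-1$ on $H^2(k\overline{\CP}_2)$, and then the convexity of $\mathsf{a} \mapsto [\mathsf{a}^+]^2$ along the segment between them; your version is correct but not leaner, and in the $b_+ = 1$ case you still need (as the paper notes) that the maximizing class can be chosen past-pointing. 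Second, your collapsing construction for properly elliptic surfaces --- shrinking the torus fibres of a single elliptic fibration --- glosses over singular and multiple fibres; the construction used in \cite{lky} and summarized at the start of \S\ref{yzero} first deforms to an orbifold that fibres with flat fibres over a $2$-orbifold, glues in gravitational instantons, and finally uses Burns metrics for blow-ups, all organized by the Cheeger--Gromov $F$-structure machinery rather than a bare submersion. You have correctly flagged both of these (the $b_+ = 1$ chamber gymnastics and the elliptic-surface collapse) as the places where ``substantial argument'' is still required, and that is an accurate self-assessment of where your sketch is a sketch rather than a proof.
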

Here the assumption that $b_1(M)$ is even is equivalent \cite{bpv,nick,siu} to $(M,J)$ admitting a compatible 
K\"ahler metric; thus,   complex surfaces with $b_1$  even are said to be of {\sf K\"ahler type}.
In \S \ref{yzero} below will we see that 
part, but only part,  of this pattern 
still holds true for complex surfaces with $b_1$ odd.
In particular, Theorem \ref{ellipsis} below and  previously-known results together imply  the 
 following result, which does not depend on the parity of $b_1$:

\begin{theorem} 
\label{parity}
Let  $(M,J)$ be a 
 compact complex  surface with $\kod(M,J) \geq 0$, and let 
 $(X, \check{J})$ be its minimal model. Then 
 $$\mathscr{Y}(M) = \mathscr{Y}(X).$$
\end{theorem}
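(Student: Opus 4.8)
Theorem~\ref{parity} asserts that the Yamabe invariant is a birational invariant for complex surfaces of non-negative Kodaira dimension: $\mathscr{Y}(M) = \mathscr{Y}(X)$ where $X$ is the minimal model and $M$ is obtained from $X$ by blowing up finitely many times, i.e.\ $M$ is diffeomorphic to $X \# k\overline{\CP}_2$ for some $k\geq 0$. The proof should establish the two inequalities $\mathscr{Y}(M) \leq \mathscr{Y}(X)$ and $\mathscr{Y}(M) \geq \mathscr{Y}(X)$ separately, and will split along the sign of $\mathscr{Y}(X)$. Note first that $\kod \geq 0$ forces $\mathscr{Y}(X) \leq 0$: by Theorem~\ref{bacalao} this is automatic in the K\"ahler-type case, and Theorem~\ref{ellipsis} (cited as forthcoming) supplies it in general, so we never have to deal with the subtle positive case here.

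\textbf{The upper bound $\mathscr{Y}(M) \leq \mathscr{Y}(X)$.} Since $\mathscr{Y}(X) \leq 0$, we have $\mathcal{I}_s(X) = |\mathscr{Y}(X)|^{n/2}$ by \eqref{consign}, and likewise for $M$ once we know $\mathscr{Y}(M) \leq 0$; equivalently (by \eqref{consign} again) it suffices to show $\mathcal{I}_s(M) \geq \mathcal{I}_s(X)$. This is where Seiberg-Witten theory enters. The minimal surface $X$ (being of $\kod \geq 0$ and K\"ahler type, or handled by Theorem~\ref{ellipsis} otherwise) carries a spin$^c$ structure $\mathfrak{c}_X$ with non-trivial Seiberg-Witten invariant and with $c_1(\mathfrak{c}_X) = \pm c_1(K_X)$, and crucially $c_1^2(K_X) = c_1^2(X) \geq 0$ for a minimal surface of non-negative Kodaira dimension. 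Under blow-up, $c_1(K_M) = c_1(K_X) + \sum_i E_i$ where the $E_i$ are the exceptional classes with $E_i^2 = -1$, and the blow-up formula for Seiberg-Witten invariants says the spin$^c$ structure $\mathfrak{c}_M$ with $c_1 = \pm(c_1(K_X) + \sum E_i)$ inherits the non-trivial invariant from $\mathfrak{c}_X$. Applying Proposition~\ref{best} on $M$ to this $\mathfrak{c}_M$: for any metric $g'$ on $M$ one gets $\int s_{g'}^2 \, d\mu \geq 32\pi^2 [c_1^+]^2$, and one must then show the topological quantity controlling the right-hand side is exactly $c_1^2(K_X)$. The point is that although $c_1^2(K_M) = c_1^2(K_X) - k < c_1^2(K_X)$, the relevant ``curvature estimate'' refines — as in LeBrun's work on the curvature of Einstein manifolds — to give $\inf_{g'} \int s_{g'}^2 \, d\mu_{g'} = 32\pi^2 c_1^2(K_X)$ on $M$, matching the value on $X$ given by Theorem~\ref{optimal}/\ref{ellipsis}. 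Hence $\mathcal{I}_s(M) \geq \mathcal{I}_s(X)$ (and in particular $\mathscr{Y}(M) < 0$ when $\mathscr{Y}(X) < 0$, while the $\mathscr{Y}(X) = 0$ case needs the cheap observation that $\mathscr{Y}(M) \leq 0$ too, e.g.\ because $M$ does not admit positive-scalar-curvature metrics — which again follows from the non-triviality of the Seiberg-Witten invariant via \eqref{wnbk}).

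\textbf{The lower bound $\mathscr{Y}(M) \geq \mathscr{Y}(X)$.} This is the ``constructive'' direction. We must build metrics on $M = X \# k\overline{\CP}_2$ whose normalized Einstein-Hilbert action approaches $\mathscr{Y}(X)$. Since $\mathscr{Y}(X) \leq 0$, one wants to invoke a gluing/connected-sum lemma: Kobayashi's inequality \eqref{kobble} only applies when both summands have $\mathscr{Y} \geq 0$, so instead one needs the negative-case analogue, namely that connect-summing with $\overline{\CP}_2$ (which has $\mathscr{Y}(\overline{\CP}_2) = \mathscr{Y}(\CP_2) > 0$) does not decrease the Yamabe invariant when the other summand has $\mathscr{Y} \leq 0$. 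Concretely: given a metric on $X$ nearly realizing $\mathscr{Y}(X)$, perform $k$ connect-sums with round-sphere-like necks à la the ``balloon with gondola'' picture (Figure~\ref{balloon}); since the $\overline{\CP}_2$ pieces and the necks can be made to contribute negligibly to $\int |s|^{n/2}$ and to the relevant integrals, the resulting metrics on $M$ have $\mathscr{E}$ converging to $\mathscr{E}$ of the original metric on $X$. Taking the sup over conformal classes on $X$ then gives $\mathscr{Y}(M) \geq \mathscr{Y}(X)$. Combined with the upper bound, equality follows.

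\textbf{Main obstacle.} The genuinely hard part is the upper bound, specifically upgrading Proposition~\ref{best} on the blown-up manifold $M$ so that the lower bound for $\int s^2$ is $32\pi^2 c_1^2(K_X)$ rather than the naive $32\pi^2 c_1^2(K_M)$. Since $c_1^2(K_M) = c_1^2(K_X) - k$ is strictly smaller, one cannot simply quote Proposition~\ref{best} verbatim; one needs the sharper argument (playing the spin$^c$ structures $\pm c_1(K_X) \pm E_1 \pm \cdots \pm E_k$ off against one another, as alluded to at the end of the Figure~\ref{cones} discussion, and exploiting that exceptional curves must be $J'$-holomorphic spheres of negative self-intersection and hence cost essentially nothing in the curvature integral). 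This is precisely the estimate that underlies LeBrun's theorem that the minimal model's $c_1^2$ governs $\mathcal{I}_s$; making it rigorous in the present generality is the crux, and it is what Theorem~\ref{ellipsis} is presumably designed to deliver — so in the final write-up one would reduce Theorem~\ref{parity} to a clean citation of Theorem~\ref{ellipsis} plus the elementary connected-sum construction above, exactly as the excerpt's sentence ``Theorem~\ref{ellipsis} below and previously-known results together imply'' suggests.
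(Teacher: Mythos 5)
The paper's proof is much shorter than you imagine: since blowing up and down preserves Kodaira dimension, one simply splits into the two cases $\kod(M,J)=2$ and $\kod(M,J)\in\{0,1\}$. In the first case $(M,J)$ is automatically of K\"ahler type, and Theorem~\ref{genotype} (a previously-known result from \cite{lno}) gives $\mathscr{Y}(M)=\mathscr{Y}(X)=-4\pi\sqrt{2c_1^2(X)}$. In the second case $M$ and $X$ have the same Kodaira dimension (namely $0$ or $1$), so Theorem~\ref{ellipsis} applies to both and yields $\mathscr{Y}(M)=0=\mathscr{Y}(X)$. There is nothing more to it.

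Your proposal, by contrast, tries to run a uniform Seiberg--Witten/connected-sum argument, and this breaks down in exactly the place Theorem~\ref{ellipsis} was designed to rescue. Your upper-bound argument asserts that the minimal surface $X$ ``carries a spin$^c$ structure $\mathfrak{c}_X$ with non-trivial Seiberg--Witten invariant,'' and hence the blow-up inherits a nontrivial invariant; but the text explicitly produces a counterexample (the $\kod=1$, $b_1$-odd elliptic surface $L^\times/\ZZ$ with base of genus $\geq 2$) where Biquard's calculation shows \emph{all} Seiberg--Witten invariants vanish. This is the entire reason Theorem~\ref{ellipsis} resorts to the Schoen--Yau minimal-hypersurface technique combined with Gromov--Lawson enlargeability (Proposition~\ref{expansive}) rather than Proposition~\ref{best}. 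Your claim that ``Theorem~\ref{ellipsis} is presumably designed to deliver'' the refined monopole estimate therefore misidentifies what that theorem says and proves: it has nothing to do with basic classes or the $\pm E_i$ diffeomorphism trick (that is the content of Theorem~\ref{genotype}), and is instead a statement that $\mathscr{Y}(M)=0$ for $\kod\in\{0,1\}$, proved by excluding positive scalar curvature via enlargeability of a cover, and supplying $\mathcal{I}_s(M)=0$ via Cheeger--Gromov collapse.

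A secondary, smaller problem is the lower bound. There is no general ``negative Kobayashi lemma'' asserting that connect-summing a $\mathscr{Y}\leq 0$ manifold with a $\mathscr{Y}>0$ manifold leaves the Yamabe invariant unchanged, and ``round-sphere-like necks'' do not make the $\overline{\CP}_2$ summand's curvature contribution go away, because $\int|s|^{n/2}$ is scale-invariant. What actually works, and what the paper does in the $\kod=2$ case, is to glue in the scalar-flat asymptotically-flat Burns metric on $\overline{\CP}_2\setminus\{pt\}$; that model is what makes the contribution arbitrarily small. In the $\kod\in\{0,1\}$ case the lower bound is not a gluing statement at all; it is the collapse statement $\mathcal{I}_s(M)=0$ built into Theorem~\ref{ellipsis}. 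So the correct write-up is not a direct two-inequality proof but a case reduction to the two theorems just described.
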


When $\kod (M,J) = 2$,  this takes the following quantitative  form \cite{lno}: 

\begin{theorem} 
\label{genotype} 
Let $(M^4,J)$ be a compact complex surface of general type, and let  $(X, \check{J})$ be its minimal model. 
Then 
$$\mathcal{I}_s (M) = \mathcal{I}_s (X) = 32\pi^2 c_1^2 (X) > 0$$
and 
$$\mathscr{Y}(M) = \mathscr{Y}(X) = -4\pi \sqrt{2 c_1^2(X)}  < 0.$$
\end{theorem}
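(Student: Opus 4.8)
The plan is to reduce the statement to two separate assertions — that the minimal model satisfies $\mathcal{I}_s(X)=32\pi^2c_1^2(X)$, and that neither $\mathcal{I}_s$ nor $\mathscr{Y}$ is disturbed by blowing up — and then to pass between the $\mathfrak S$-infimum and the Yamabe invariant by means of \eqref{consign}. Indeed, once $\mathcal{I}_s(M)=32\pi^2c_1^2(X)$ is in hand, positivity of this number forces $\mathscr{Y}(M)<0$ by \eqref{consign}, and \eqref{consign} then gives $\mathscr{Y}(M)=-\sqrt{\mathcal{I}_s(M)}=-4\pi\sqrt{2c_1^2(X)}$; applying the same reasoning to $X$ yields the remaining equalities. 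So the entire content lies in the two displayed identities for $\mathcal{I}_s$.

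First I would handle the minimal model $(X,\check J)$. The lower bound $\mathcal{I}_s(X)\ge 32\pi^2c_1^2(X)$ is pure Seiberg--Witten theory: as recorded in the discussion before Theorem \ref{optimal}, any compatible K\"ahler metric on $X$, equipped with the spin$^c$ structure $\mathfrak c_0$ determined by $\check J$, supports a solution of (\ref{drc}--\ref{sd}), and since $c_1^2(\mathfrak c_0)=c_1^2(X)>0$ the relevant count is metric independent even when $b_+(X)=1$; hence, by Proposition \ref{best}, \emph{every} metric $g'$ on $X$ satisfies $\int_X s_{g'}^2\,d\mu\ge 32\pi^2[c_1^+]^2\ge 32\pi^2c_1^2(X)$, the last step being the standard Hodge-theoretic inequality $[c_1^+]^2\ge c_1^2$ already used in \eqref{amazing}. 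For the matching upper bound, when $K_X$ is ample the Aubin--Yau theorem produces a K\"ahler--Einstein metric with $s<0$ and Theorem \ref{optimal} applies verbatim; in general $K_X$ is merely nef and big, and one must instead work with the canonical model $X_{\mathrm{can}}$ (obtained by contracting the $(-2)$-curves to rational double points), which carries a K\"ahler--Einstein orbifold metric, and then recover $X$ by gluing in Kronheimer's ALE hyperk\"ahler — hence scalar-flat — metrics over the resolved singularities, after flattening the orbifold metric near each singular point. This produces K\"ahler metrics on $X$ whose $\int s^2$ tends to $32\pi^2c_1^2(X_{\mathrm{can}})=32\pi^2c_1^2(X)$, so $\mathcal{I}_s(X)=32\pi^2c_1^2(X)$.

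Next I would treat the blow-up, writing $M=X\#k\overline{\CP}_2$ with exceptional classes $E_1,\dots,E_k$. The iterated Seiberg--Witten blow-up formula shows that all of the classes $\pm(\pi^*K_X+\sum_i\epsilon_iE_i)$, $\epsilon_i\in\{\pm1\}$, are monopole classes of $M$; averaging over the sign choices shows that $\pi^*K_X$ lies in their convex hull, so the convex-hull strengthening of Proposition \ref{best} (a known extension whose hypotheses are met here because $(\pi^*K_X)^2=c_1^2(X)>0$) gives $\int_M s_{g'}^2\,d\mu\ge 32\pi^2[(\pi^*K_X)^+]^2\ge 32\pi^2(\pi^*K_X)^2=32\pi^2c_1^2(X)$ for every metric $g'$ on $M$. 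For the reverse inequality I would take a near-optimal K\"ahler metric on $X$, excise $k$ tiny geodesic balls, and glue in suitably scaled copies of the Burns scalar-flat K\"ahler ALE metric on $\overline{\CP}_2\setminus\{\mathrm{pt}\}$; since scalar curvature is altered only on thin necks, a small conformal correction keeps $\int_M|s|^2$ within $\varepsilon$ of its value on $X$. Hence $\mathcal{I}_s(M)=32\pi^2c_1^2(X)$ as well, and the conversion via \eqref{consign} completes the proof. One should note in passing that, whenever $M\ne X$ or $X$ carries $(-2)$-curves, this infimum is \emph{not} attained, since equality in Proposition \ref{best} would demand a K\"ahler--Einstein metric with $c_1$ self-dual, which is incompatible with the presence of exceptional or $(-2)$-curves.

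The Seiberg--Witten lower bounds are essentially automatic once the monopole-class machinery is in place, so the hard part will be showing that those bounds are sharp: this rests on the two gluing constructions — resolving the singularities of $X_{\mathrm{can}}$ with Kronheimer metrics, and attaching Burns metrics along small necks — together with the orbifold Aubin--Yau existence theorem and the careful estimates guaranteeing that the neck regions contribute an arbitrarily small amount to $\int s^2$. A secondary point requiring vigilance is the $b_+(M)=1$ case, where one must verify, as indicated in \S\ref{yneg}, that the condition $c_1^2>0$ genuinely renders the chamber-dependent invariant, and hence the curvature estimate, independent of the metric.
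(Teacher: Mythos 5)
Your proposal is correct and tracks the paper's proof almost exactly: reduction via \eqref{consign}; the Seiberg--Witten lower bound on the minimal model via Proposition \ref{best}; the upper bound by passing to the pluricanonical model with its orbifold K\"ahler--Einstein metric and replacing the orbifold singularities with Kronheimer's ALE gravitational instantons; and the Burns-metric gluing to handle the blow-up, with the $b_+=1$ chamber issue flagged just as the paper does. The one place you take a genuinely different route is the lower bound for $M\approx X\#k\overline{\CP}_2$. You invoke the full iterated Seiberg--Witten blow-up formula to produce all of the basic classes $\pm(\pi^*K_X+\sum_i\epsilon_i E_i)$, observe that $\pi^*K_X$ lies in their convex hull, and appeal to the monopole-class/convexity estimate that the paper codifies as Proposition \ref{watch}. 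The paper instead uses a single self-diffeomorphism $\varphi$ of $M$ that acts by $+1$ on $H^2(X)$ and by $-1$ on $H^2(k\overline{\CP}_2)$; since $\varphi$ carries basic classes to basic classes, $c_1(X)$ is the average of just the two basic classes $\mathsf{a}_1=c_1(M,J)$ and $\mathsf{a}_2=c_1(M,\varphi_*J)$, and the positive-definiteness of $\bullet$ on $\mathcal{H}^+_g$ forces $\max([\mathsf{a}_1^+]^2,[\mathsf{a}_2^+]^2)\geq[c_1(X)^+]^2\geq c_1^2(X)$, so Proposition \ref{best} applies directly. The two arguments have identical underlying content --- your list of sign choices is the orbit of $\mathsf{a}_1$ under the larger reflection group, of which the paper needs only one non-trivial element --- but the paper's two-element version is leaner and requires neither the blow-up formula in full nor the $\beta$-invariant formalism, while yours sits more naturally within the general monopole-class framework the paper develops a few paragraphs later. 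Your added remark that the infimum is not attained when $M\neq X$ or $X$ carries $(-2)$-curves is correct (equality would force a K\"ahler--Einstein metric with $c_1<0$, incompatible with a $(-1)$- or $(-2)$-curve by adjunction) and is a nice observation the paper leaves implicit.
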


The proof first uses  Seiberg-Witten theory to show that  $\mathcal{I}_s (M) \geq 32\pi^2 c_1^2 (X)$; 
this is made possible  by the fact that  every surface of general  type is  K\"ahler (and indeed projective algebraic), 
for reasons  related to  the fact that the minimal model $X$ satisfies $c_1^2(X) > 0$. If 
  $b_+(M) = b_+(X)> 1$, 
the real Chern class 
$$c_1^\RR(M, J) \in H^2 (M,\ZZ)/\mbox{torsion}\subset H^2(M, \RR) $$
 is a so-called {\sf basic class}, 
meaning that it is the Chern class of a 
spin$^c$ structure $\mathfrak{c}$  with  non-zero Seiberg-Witten invariant. If $M=X$ is minimal, the desired lower bound
follows from Proposition \ref{best}.
Otherwise,   $M\approx X\# k\overline{\CP}_2$  has 
$H^2 (M) = H^2(X) \oplus  H^2(k \overline{\CP}_2)$, and $M$ has a self-diffeomorphism $\varphi : M \to M$ 
that  acts by $+1$ on  $H^2(X)$, but as $-1$ on $H^2(k \overline{\CP}_2)$. Since $\varphi$ sends
basic classes to basic classes, this means  that $c_1(X)$ is the average of two 
basic classes $\mathsf{a}_1 = c_1(M, J)$ and $\mathsf{a}_2 = c_1(M, \varphi_*J)$, 
and its projection $c_1(X)^+$ to $\mathcal{H}^+_g$ is therefore
the average of the orthogonal  projections  $\mathsf{a}_1^+, \mathsf{a}_2^+\in \mathcal{H}^+_g$ of these classes
with respect to the intersection form $\bullet$. In particular, 
we must have 
$$ \max ([\mathsf{a}_1^+]^2, [\mathsf{a}_2^+]^2)\geq [c_1(X)^+]^2 \geq c_1^2(X)$$ 
because  $\bullet$ is positive-definite on $\mathcal{H}^+_g$.   Proposition \ref{best} therefore 
implies
$$\mathfrak{S} (g) = \int_M s_g^2 d\mu_g \geq 32\pi^2 \max ([\mathsf{a}_1^+]^2, [\mathsf{a}_2^+]^2)  \geq 32\pi^2 c_1^2(X).$$
When $b_+(M)= b_+(X) =1$, the proof  is similar, but also depends on   the fact that 
$\max ([\mathsf{a}_1^+]^2, [\mathsf{a}_2^+]^2)$ is achieved by a spin$^c$ structure for which 
$\mathsf{a}_j^+$ is past-pointing. 
Either way, we now see that   $\mathcal{I}_s (M)= \inf \mathfrak{S} \geq 32\pi^2c_1^2(X)$. 

To finish the proof, we now just need to construct a sequence of metrics $g_j$ on $M$ for which  $\mathfrak{S} (g_j)$
tends to  this lower bound. This is made possible by the fact that the  {\em pluricanonical 
model} $\check{X}$ of $M$ is a complex orbifold with $c_1< 0$, so  that the proof of the Aubin-Yau
theorem therefore endows it  \cite{tsuj} 
with an orbifold K\"ahler-Einstein metric  satisfying $\int s^2 d\mu = 32\pi^2 c_1^2({\check{X}})$.
However, the natural map  $X\to \check{X}$ relating the minimal and pluricanonical models is 
 a so-called crepant resolution, meaning that $c_1(X)$ is just the pull-back
of $c_1(\check{X})$. Because 
the orbifold singularities of $\check{X}$ are 
all modeled on $\CC^2/\Gamma$ for finite groups $\Gamma\subset \mathbf{SU}(2)$, 
each can be replaced with a  gravitational instanton 
(asymptotically-locally-Euclidean Ricci-flat K\"ahler  manifold) \cite{kron} 
to obtain a Riemannian metric on the minimal model $X$, while
increasing  $\int s^2 d\mu$ by an arbitrarily small amount arising  from the transition region. 
Similarly, to pass from the minimal model $X$ to the given complex surface $M\approx X\# k \overline{\CP}_2$, 
one can use the fact that $\overline{\CP}_2-\{ pt\}$ admits an asymptotically-flat scalar-flat metric called
the {\em Burns metric} \cite{lpa}; gluing in rescaled copies of this model then gives  metrics on $M$ that 
with $\mathfrak{S} < 32\pi^2 c_1^2({X}) + \varepsilon$ for any given $\varepsilon > 0$.

\bigskip

If $M^4$ admits a K\"ahler metric, and if $\mathfrak{c}$ is the spin$^c$ structure determined by the associated 
integrable almost-complex structure $J$, we have already seen that the Seiberg-Witten invariant is non-zero if $b_+ > 1$,
and that the Seiberg-Witten invariant is similarly non-zero in a  suitable chamber  if $b_+ =1$. 
 However, Taubes \cite{taubes} 
 proved a dramatic generalization of  this result:
  
 \begin{theorem}[Taubes]
 Let $(M^4,\omega )$ be a compact symplectic $4$-manifold, and let $\mathfrak{c}$ be the 
 spin$^c$ structure on $M$ determined by an almost-complex structure $J$  compatible 
 with $\omega$. If  $b_+(M) > 1$, then $\mathbf{SW}_\mathfrak{c}(M) = \pm 1$. 
Similarly, if  $b_+(M) = 1$, then $\mathbf{SW}_\mathfrak{c}(M, \sphericalangle^+) = \pm 1$,
where $ \sphericalangle^+$ is the chamber containing large positive multiples of the symplectic form $\omega$. 
 \end{theorem}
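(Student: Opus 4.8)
The plan is to imitate the perturbed Seiberg--Witten argument already carried out above for K\"ahler metrics, compensating for the possible non-integrability of the $\omega$-compatible almost-complex structure $J$ by driving the perturbation parameter to $+\infty$; this is, in essence, Taubes' strategy. First, fix a Riemannian metric $g$ compatible with $\omega$ and $J$, normalized so that $\omega$ is a self-dual $2$-form of constant length $\sqrt2$; since $d\omega=0$ this $\omega$ is then $g$-harmonic, and $[\omega]^2 > 0$. For the spin$^c$ structure $\mathfrak c$ determined by $J$ one has, exactly as in the K\"ahler case, the $J$-type splittings
$$\mathbb V_+ = \Lambda^{0,0}\oplus\Lambda^{0,2}, \qquad \mathbb V_- = \Lambda^{0,1}, \qquad L = \Lambda^{0,2} = K^{-1},$$
and the same formula $\sigma(\alpha,\beta) = \frac14(|\alpha|^2-|\beta|^2)\,\omega + \Im m(\bar\alpha\beta)$ for a spinor $\Phi=(\alpha,\beta)$ with $\alpha\in\Gamma(\Lambda^{0,0})$, $\beta\in\Gamma(\Lambda^{0,2})$. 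Let $\theta_K$ be the canonical Hermitian connection on $K^{-1}$ attached to $g$ and $J$. The one genuinely new phenomenon is that $\dir_{\theta_K}$ now differs from $\sqrt2(\bar\partial+\bar\partial^*)$ by a fixed zeroth-order term built from $\nabla\omega$; thus $\dir_{\theta_K}(1,0)$ is a fixed section $\varepsilon_0\in\Gamma(\Lambda^{0,1})$ that vanishes precisely when $J$ is integrable.

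Next, for each large $r>0$ I would consider equation \eqref{drc} together with the perturbed equation \eqref{ptsd} for the self-dual form $\eta_r := \frac r4\,\omega + iF^+_{\theta_K}$. Since $\sigma(\sqrt r,0)=\frac r4\,\omega$, the model configuration $\Phi=(\sqrt r,0)$, $\theta=\theta_K$ satisfies \eqref{ptsd} exactly and satisfies \eqref{drc} up to the error $\sqrt r\,\varepsilon_0$; after the natural rescaling $\Phi=\sqrt r\,\widehat\Phi$ this error becomes the $r$-independent quantity $\varepsilon_0$. When $J$ is integrable, $\varepsilon_0=0$ and $iF^+_{\theta_K}=\frac s4\,\omega$, so $\eta_r=\frac{r+s}{4}\,\omega$ and one recovers verbatim the K\"ahler perturbation of the previous section with $t=\sqrt r$. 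The harmonic part of $\eta_r$ is $2\pi[c_1(L)]^+ + \frac r4\,[\omega]$, so condition \eqref{no-harm} holds for every $r>0$, and when $b_+(M)=1$ the pair $(g,\eta_r)$ lies in the chamber $\sphericalangle^+$ containing large positive multiples of $\omega$.

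The heart of the argument is to show that for $r$ sufficiently large the moduli space $\mathfrak M_{\mathfrak c}(g,\eta_r)$ consists of a single non-degenerate point. For existence and non-degeneracy one corrects the approximate model above to a genuine solution by an implicit-function argument in the rescaled variables, using that the linearization of \eqref{drc}--\eqref{ptsd} at the model has a right inverse bounded uniformly in $r$. For uniqueness one needs \emph{a priori} estimates uniform in $r$: plugging $\eta=\eta_r$ and the explicit $\sigma$ into the perturbed Weitzenb\"ock identity \eqref{pwbk} and applying the maximum principle gives $|\alpha|^2\le r+C$ with $C$ independent of $r$; a companion differential inequality for $|\beta|^2$ — obtained from \eqref{pwbk} together with \eqref{drc}, whose coupling supplies a coercive term of size $\sim|\alpha|^2|\beta|^2\sim r|\beta|^2$ — forces $\sup|\beta|\le Cr^{-1/2}$; and a further maximum-principle argument yields $\sup(r-|\alpha|^2)\le C$, so $|\alpha|$ is uniformly close to $\sqrt r$ and in particular nowhere zero. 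Feeding these bounds back into \eqref{ptsd} controls $F^+_\theta$, hence — after the gauge-fixing \eqref{gauge} and elliptic bootstrapping — confines the configuration, modulo gauge and in the rescaled variables, to an arbitrarily small neighborhood of the model once $r\gg0$, where uniqueness follows from the non-degeneracy above. Therefore $\mathbf{SW}_{\mathfrak c}(M)=\pm1$ (and $n_{\mathfrak c}(M)=1\bmod2$) when $b_+(M)>1$, while $\mathbf{SW}_{\mathfrak c}(M,\sphericalangle^+)=\pm1$ in the chamber above when $b_+(M)=1$.

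The whole weight of the proof rests on the uniform estimates of the last step, and the subtle point is the bound $\sup|\beta|=O(r^{-1/2})$: in \eqref{pwbk} the term $-8\langle\eta_r,\sigma(\Phi)\rangle$ contributes $+\frac r2|\beta|^2$, which has the wrong sign for a naive maximum principle on $|\beta|^2$, so one must instead exploit the coupling of $\alpha$ and $\beta$ inside the Dirac equation \eqref{drc} and, at the same time, show that $\varepsilon_0$ and the $\nabla\omega$-terms concealed in $\dir_{\theta_K}$ are genuinely lower order, uniformly as $r\to\infty$. Making these interactions quantitative is precisely the analytic core of Taubes' theorem. (Taubes in fact proves far more, identifying $\mathbf{SW}_{\mathfrak c}$ with a Gromov-type count of pseudo-holomorphic curves, but only the $\pm1$ statement is needed here.)
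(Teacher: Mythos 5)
Your outline matches the paper's sketch, which is itself just a high-level summary of Taubes' original argument: fix an almost-K\"ahler metric, perturb the curvature equation by $\eta = t^2\omega + \eta_0$ with $t \gg 0$, observe that the constant-length spinor concentrated in $\Lambda^{0,0}$ is an approximate solution, and use the Weitzenb\"ock formula to obtain $r$-uniform a priori bounds forcing all solutions into an arbitrarily small neighborhood of this model where uniqueness and non-degeneracy can be checked. You correctly flag the genuine analytic difficulty --- the wrong-sign term in the naive $|\beta|^2$ maximum principle, and the need to exploit the $\alpha$--$\beta$ coupling in the Dirac equation while controlling the $\nabla\omega$ corrections --- which is exactly where both the paper and your proposal defer to Taubes' original analysis rather than reproducing it.
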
 
 The strategy of the proof is similar to what has been described in the K\"ahler case, but is technically far more subtle. 
 One chooses a so-called almost-K\"ahler metric $g$ that is related to the symplectic form  by 
 $\omega = g(J \cdot, \cdot )$ for some  almost-complex structure $J$, and then writes down an explicit solution of (\ref{drc}--\ref{ptsd}) 
for  perturbations $\eta$ of the form $t^2 \omega + \eta_0$. When  $t \gg 0$ is sufficiently large, one then uses the 
Weitzenb\"ock formula to show that, up to gauge equivalence,  the explicit solution is actually the only solution. 

In a stunning pair  of sequels \cite{taubes2,taubes3}, Taubes then extended these ideas in ways 
that  completely revolutionized $4$-dimensional symplectic geometry, and gave  the  trail-blazing earlier 
contributions of Gromov \cite{gromsym}
and McDuff \cite{mcrules,dusa}   surprising  new meanings. While McDuff had previously shown that
blowing up and down had natural generalizations to symplectic $4$-manifolds, Taubes now showed that every symplectic
$4$-manifold $M$ with $b_+(M)> 1$ has a unique minimal model $X$ with $c_1^2(X)\geq 0$, and that 
symplectic minimality or non-minimality for such manifolds depends only on diffeomorphism type. 
Taubes' also showed that the anti-canonical class $-c_1$ on any such symplectic manifold is always
represented by one of Gromov's  pseudo-holomorphic curves,  so that every symplectic $4$-manifold with 
$b_+> 1$ in particular  satisfies $c_1\bullet [\omega ] < 0$. When $b_+=1$, versions of Taubes' results still hold, 
 but in this context one must carefully check  whether the Seiberg-Witten invariant is non-zero in chamber needed for any given application. 
 
  Taubes' results immediately  made it  easy to extend part of Theorem \ref{genotype}   to $4$-manifolds that 
 support symplectic structures. The key definition needed to make  this possible was the following \cite{lno}: 
 
 \begin{definition} 
 \label{sympathy} 
 A symplectic $4$-manifold $(M,\omega)$ is said to be of {\sf general type} 
 iff its minimal model $(X, \check{\omega})$ satisfies 
 \begin{itemize}
 \item $c_1^2(X) > 0$; and 
 \item $c_1(X) \bullet [\check{\omega}] < 0.$
 \end{itemize}
 \end{definition}

 \noindent 
 Kodaira classification then implies that a K\"ahler manifold $(M^4, J , \omega)$ 
  is of general type as a complex surface iff it 
 is  of general type as a symplectic $4$-manifold, and  essentially  the same proof as before  
 then yields  \cite{lno}  the following partial generalization
  of Theorem \ref{genotype}:

\begin{theorem} 
\label{phenotype} 
Let $(M^4,\omega)$ be a symplectic manifold  of general type, and let  $(X, \check{\omega})$ be its minimal model. 
Then \begin{equation}
\label{symphony}
\mathcal{I}_s (M) \geq   32\pi^2 c_1^2 (X) > 0 \quad \mbox{and}\quad \mathscr{Y}(M) \leq  -4\pi \sqrt{2 c_1^2(X)}  < 0.
\end{equation}
\end{theorem}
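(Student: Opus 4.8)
The plan is to establish only the curvature lower bound for $\mathcal{I}_s(M)$ directly via Seiberg--Witten theory; the Yamabe bound will then drop out formally from \eqref{consign}, and --- since Theorem \ref{phenotype} asks only for \emph{inequalities} --- there is no need to carry out a gluing construction of near-optimal metrics as was required in the proof of Theorem \ref{genotype}. Concretely, writing $(X,\check\omega)$ for the minimal model, the whole problem reduces to showing that
$$\int_M s_g^2\, d\mu_g \;\geq\; 32\pi^2\, c_1^2(X)$$
for \emph{every} Riemannian metric $g$ on $M$. Taking the infimum then gives $\mathcal{I}_s(M)=\inf_g \mathfrak{S}(g)\geq 32\pi^2 c_1^2(X)$, and since ``general type'' guarantees $c_1^2(X)>0$, the right branch of \eqref{consign} forces $\mathscr{Y}(M)<0$ with $\mathscr{Y}(M)=-\sqrt{\mathcal{I}_s(M)}\leq -4\pi\sqrt{2\,c_1^2(X)}<0$, which is exactly \eqref{symphony}. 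Note that any symplectic $4$-manifold has $b_+(M)\geq 1$, so the curvature inequality splits into the cases $b_+(M)>1$ and $b_+(M)=1$.

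First I would treat $b_+(M)>1$. By Taubes' theorem, the spin$^c$ structure $\mathfrak{c}_J$ determined by an almost-complex structure $J$ compatible with a symplectic form on $M$ has $\mathbf{SW}_{\mathfrak{c}_J}(M)=\pm 1$, so the equations (\ref{drc}--\ref{sd}) admit a solution for $\mathfrak{c}_J$ relative to \emph{every} metric $g$; in particular $c_1(M,J)$ is a \emph{basic class}. If $M=X$ is itself minimal, Proposition \ref{best} applied to this solution already yields $\int s_g^2 \geq 32\pi^2[c_1^+]^2 \geq 32\pi^2 c_1^2(X)$, the last step because $\bullet$ is positive-definite on $\mathcal{H}^+_g$, whence $[\alpha^+]^2\geq \alpha\bullet\alpha$ for every class $\alpha$. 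If instead $M\approx X\#\, k\,\overline{\CP}_2$, I would invoke the self-diffeomorphism $\varphi:M\to M$ that acts as $+1$ on $H^2(X)$ and as $-1$ on the $k$ exceptional classes. Since Seiberg--Witten invariants are diffeomorphism invariants, $\varphi_*\mathfrak{c}_J$ is again basic, and $c_1(X)$ (pulled back to $M$) is the average $\tfrac12(\mathsf{a}_1+\mathsf{a}_2)$ of the basic classes $\mathsf{a}_1=c_1(M,J)$ and $\mathsf{a}_2=c_1(M,\varphi_*J)$. As $[\,\cdot^+\,]^2$ is a positive-definite quadratic form on $\mathcal{H}^+_g$ and $(\mathsf{a}_1+\mathsf{a}_2)^+=2\,c_1(X)^+$, convexity gives $\max([\mathsf{a}_1^+]^2,[\mathsf{a}_2^+]^2)\geq [c_1(X)^+]^2\geq c_1^2(X)$, and applying Proposition \ref{best} to whichever of $\mathsf{a}_1,\mathsf{a}_2$ realizes the maximum produces the desired bound.

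When $b_+(M)=1$ the same skeleton applies, but now the invariants are \emph{chamber-dependent}, so one must check that $\eta=0$ lies in the chamber $\sphericalangle^+$ in which Taubes' invariant is nonzero, namely the one containing large positive multiples of the symplectic form. This is precisely where the hypothesis $c_1(X)\bullet[\check\omega]<0$ (part of the definition of symplectic general type, and automatic from Taubes when $b_+>1$) enters: orienting ``time'' in $(H^2(M,\RR),\bullet)$ so that the relevant symplectic class is future-pointing, the inequality $c_1\bullet[\omega]<0$ forces the self-dual projection $c_1^+$ to be past-pointing, and then the chamber analysis of \S\ref{yneg} shows $\eta=0$ sits in Taubes' chamber, so Proposition \ref{best} applies as before. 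The hard part --- and the main obstacle --- is that ``past-pointing'' is metric-independent only when the class in question has non-negative self-intersection; for the minimal model this is fine since $c_1(X)^2>0$, but for a non-minimal $M$ the individual basic classes $\mathsf{a}_j=c_1(M,J)^{(\varphi)}$ may have $\mathsf{a}_j^2<0$, so the ``max-realizer'' step must be run with care, playing the two spin$^c$ structures $\mathsf{a}_1,\mathsf{a}_2$ off against one another in the style of \cite{FM,lno} and exploiting the fact that their average $c_1(X)$ does have positive square. Once this bookkeeping is in place, both cases give $\int_M s_g^2\,d\mu_g\geq 32\pi^2 c_1^2(X)$ for all $g$, hence $\mathcal{I}_s(M)\geq 32\pi^2 c_1^2(X)>0$, and \eqref{consign} then delivers the stated bound on $\mathscr{Y}(M)$.
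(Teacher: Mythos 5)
Your argument is correct and follows the paper's approach essentially verbatim: Taubes replaces the Kähler-solution argument to furnish a nonzero Seiberg–Witten invariant (chamber-adjusted when $b_+=1$), after which the reflecting diffeomorphism, the convexity/positive-definiteness estimate on $\mathcal{H}^+_g$, and Proposition~\ref{best} yield the curvature lower bound exactly as in the proof of Theorem~\ref{genotype}. You are also right that, since Theorem~\ref{phenotype} asserts only inequalities, the orbifold Kähler–Einstein/gravitational-instanton gluing construction used to obtain equality in Theorem~\ref{genotype} can be dropped, and \eqref{consign} then converts the $\mathcal{I}_s$ bound into the stated $\mathscr{Y}$ bound.
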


Together with  constructions by Gompf \cite{gompf}, Fintushel-Park-Stern \cite{FPS},  and others, Theorem \ref{phenotype} 
implies that there are many symplectic $4$-manifolds with 
negative Yamabe invariant that do not arise as complex surfaces. On the other hand, Theorem \ref{phenotype} 
merely gives an upper bound for  the Yamabe invariant, rather than actually calculating its actual value, and my 
guess at the time was that this bound would never be sharp in the non-K\"ahler case. However, this turned out to be wrong. 
Indeed, Ioana \c{S}uvaina \cite{ioana} later succeeded in proving that certain  symplectic manifolds arising from Gompf's
rational-blow-down construction  have Yamabe invariants that do exactly saturate  \eqref{symphony}, 
even though they do not admit K\"ahler metrics. In her examples, 
the rational-blow-down operation can be carried out by starting with a K\"ahler-Einstein orbifold, and then gluing
in finite quotients of Gibbons-Hawking gravitational instantons \cite{ioana2}. Whether this phenomenon is 
rare or common seems to be largely open, and would seem to deserve a thorough and systematic   investigation.

It is also worth pointing out  that 
Definition \ref{sympathy}  was eventually  re-discovered by Li \cite{likod}, 
who went on  to provide  a systematic definition of Kodaira dimension for symplectic $4$-manifolds. 
This of course makes it tempting to ask
 whether, with  Li's definitions,  Theorems \ref{bacalao} and/or  \ref{parity} also hold for symplectic $4$-manifolds. 
While this might seem unlikely, the  issue is one with enough  intrinsic  interest to  definitely make it  worth
settling, one way or the other.

\bigskip 

Even though Witten's invariant leads to so many compelling results, our applications of Seiberg-Witten theory to the 
 Yamabe invariant have 
really only depended on being guaranteed a solution of the Seiberg-Witten equation for every 
metric on a given $4$-manifold. As it turns out, this can happen even in contexts where Witten's invariant vanishes
for every spin$^c$ structure. For example, Bauer and Furuta \cite{baufu,bauer2} discovered a generalization of
the $\ZZ_2$-valued Seiberg-Witten invariant $n_\mathfrak{c}$ that implies such an existence 
statement in contexts where the expected dimension \eqref{mdim} of the moduli space is positive. 
In the simply-connected case, their invariant is the element of the equivariant stable cohomotopy groups 
$\pi_{S^1}^{b_+} (\Ind \dir)$ represented, via a finite-dimensional-approximation scheme, 
 by the monopole map \eqref{raw}, thought of as a map between 
one-point compactifications of Hilbert spaces. When this invariant is non-zero, 
one then concludes that there is a solution of (\ref{drc}--\ref{sd}) for every Riemannian metric $g$ on $M$. 
Because this property is useful in and of itself, it is worth codifying, as follows:

\begin{definition}
Let $M$ be a smooth compact oriented $4$-manifold
with $b_{+}\geq 2$. An element $\mathbf{a}\in  H^{2}(M,\ZZ )/
\mbox{\rm torsion}$, $\mathbf{a}\neq 0$,  is  called a {\sf monopole
class} of $M$ iff there is some  spin$^{c}$ structure
$\mathfrak{c}$ 
on $M$ with first Chern class 
$$c_{1}(L)\equiv \mathbf{a} ~~~\bmod \mbox{\rm torsion}$$ for which   the   Seiberg-Witten 
equations (\ref{drc}--\ref{sd})
have a solution for every Riemannian  metric $g$ on $M$. 
\end{definition}

While this is a completely ``soft'' definition\footnote{Here the   hypothesis $b_+\geq 2$  is primarily imposed
  as a matter of taste rather than of necessity, because  
 the related  notion of  a {\em retroactive class} better encapsulates the essential  features of
the  usual  Seiberg-Witten invariant when $b_+=1$. For details, see \cite{lebeta}.}, it   nonetheless has some useful  immediate 
consequences \cite{lebeta}: 

\begin{proposition} 
If $M$ is  a smooth compact oriented manifold with $b_+ \geq 2$, then 
$$\mathfrak{C} = \{ \mbox{monopole classes on } M\} \subset H^2(M, \RR)$$
is a diffeomorphism invariant of $M$. Moreover,  $\mathfrak{C}$
is a {\sf finite} set, and  is invariant under reflections $\mathbf{a} \mapsto - \mathbf{a}$ through the origin.
\end{proposition}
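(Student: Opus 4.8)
The plan is to dispatch the two symmetry assertions by soft naturality arguments, and to reduce finiteness to the \emph{a priori} estimate of Proposition \ref{best} together with one fact about how harmonic self-dual forms vary with the metric.

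Diffeomorphism invariance is essentially formal. The Seiberg--Witten equations (\ref{drc}--\ref{sd}) are written invariantly in terms of a Riemannian metric and a spin$^c$ structure, so an orientation-preserving diffeomorphism $\psi\colon M\to M'$ pulls a spin$^c$ structure $\mathfrak c$ on $M'$ back to one on $M$ with $c_1(\psi^*\mathfrak c)=\psi^*c_1(\mathfrak c)$, and carries any solution of (\ref{drc}--\ref{sd}) for $(g,\mathfrak c)$ to a solution for $(\psi^*g,\psi^*\mathfrak c)$. Since $g\mapsto\psi^*g$ is a bijection on the space of metrics, $\psi^*$ restricts to a bijection $\mathfrak C(M')\to\mathfrak C(M)$; in particular, taking $M'=M$, the set $\mathfrak C$ depends only on the oriented diffeomorphism type of $M$. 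Invariance under reflection is equally soft: charge conjugation sends a spin$^c$ structure $\mathfrak c$ to its conjugate $\bar{\mathfrak c}$, for which $\mathbb V_\pm\rightsquigarrow\overline{\mathbb V_\pm}$ and $c_1(\bar{\mathfrak c})=-c_1(\mathfrak c)$, while the correspondence $(\Phi,\theta)\mapsto(\bar\Phi,\bar\theta)$ carries solutions of (\ref{drc}--\ref{sd}) for $(g,\mathfrak c)$ to solutions for $(g,\bar{\mathfrak c})$ with the metric unchanged. Hence $\mathbf a\in\mathfrak C$ if and only if $-\mathbf a\in\mathfrak C$.

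For finiteness, the first step is to record what a monopole class buys us: if $\mathbf a\in\mathfrak C$, then for \emph{every} metric $g$ the equations (\ref{drc}--\ref{sd}) have a solution for some $\mathfrak c$ with $c_1(L)\equiv\mathbf a$, so Proposition \ref{best} applies with that $g$ and yields
$$Q_g(\mathbf a)\ \leq\ \frac{1}{32\pi^2}\int_M s_g^2\,d\mu_g\ =:\ C_g,$$
where $Q_g$ is the continuous non-negative quadratic form on $H^2(M,\RR)$ that sends a class to the self-intersection of the self-dual part of its $g$-harmonic representative, whose null space is precisely $\mathcal H^-_g$, and where $C_g$ depends only on $g$, not on the choice of $\mathbf a\in\mathfrak C$. (This is just the $L^2$ bound on $F^+_\theta$ furnished by the Weitzenb\"ock identity \eqref{wnbk} and the maximum principle, pushed down via the harmonic projection of $\frac{i}{2\pi}F_\theta$.) Now suppose $\mathfrak C$ were infinite. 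Since $\mathfrak C$ lies in the discrete lattice $H^2(M,\ZZ)/\mathrm{torsion}\subset H^2(M,\RR)$, we could choose pairwise distinct $\mathbf a_j\in\mathfrak C$ with $\|\mathbf a_j\|\to\infty$ for some auxiliary norm and, after passing to a subsequence, arrange that $u_j:=\mathbf a_j/\|\mathbf a_j\|\to u$ with $\|u\|=1$. For each fixed $g$ the bound above gives $Q_g(u_j)=Q_g(\mathbf a_j)/\|\mathbf a_j\|^2\leq C_g/\|\mathbf a_j\|^2\to0$, so $Q_g(u)=0$ by continuity, i.e.\ $u\in\mathcal H^-_g$; and this holds for every metric $g$. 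But $\bigcap_g\mathcal H^-_g=\{0\}$, because any nonzero $v\in H^2(M,\RR)$ admits a metric $g$ with $v\notin\mathcal H^-_g$ --- automatically if $v^2>0$, since $\mathcal H^+_g$ is a maximal positive subspace, and in general because openness of the period map for self-dual harmonic $2$-forms prevents $\bigcup_g\mathcal H^+_g$ from lying in the hyperplane $v^\perp$. This contradicts $\|u\|=1$, so $\mathfrak C$ must be finite.

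I expect the only genuinely nontrivial ingredient to be the assertion $\bigcap_g\mathcal H^-_g=\{0\}$ --- equivalently, that the self-dual harmonic part of a nonzero cohomology class can always be made nonzero by an appropriate choice of metric; this is precisely where Donaldson's theorem on generic metrics enters. Everything else --- the two naturality arguments, the estimate inherited from Proposition \ref{best}, and the compactness-on-a-lattice step --- is routine. An alternative route to finiteness would first establish that a monopole class necessarily satisfies $c_1^2(L)\geq(2\chi+3\tau)(M)$, using the dimension formula \eqref{mdim} and a cobordism argument along a path of perturbations, and then bound $\|\mathbf a\|$ directly using a single fixed generic metric; the main obstacle there is running the cobordism argument carefully when the unperturbed moduli space $\mathfrak M_{\mathfrak c}(g,0)$ fails to be a manifold.
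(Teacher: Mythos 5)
Your proof is correct, and its structure --- formal naturality for the two symmetry claims, together with the Weitzenb\"ock bound of Proposition \ref{best} plus a normalization-and-compactness argument on the lattice $H^2(M,\ZZ)/\mathrm{torsion}$ for finiteness --- is exactly the route taken in the cited reference \cite{lebeta}. The one step that goes beyond what is proved in the text is $\bigcap_g\mathcal{H}^-_g=\{0\}$, which you correctly single out; to nail it down one invokes the openness of the period map $g\mapsto\mathcal{H}^+_g$, viewed as a map from the space of metrics to the Grassmannian of $\bullet$-positive $b_+$-planes in $H^2(M,\RR)$ --- this is part of Donaldson's generic-metric analysis and appears in \cite{don} --- so that for any nonzero $\mathbf{v}$ the open image cannot lie in the proper, closed, nowhere-dense sub-Grassmannian of positive $b_+$-planes contained in the $\bullet$-orthogonal hyperplane $\mathbf{v}^\perp$ (and, as you note, when $\mathbf{v}\bullet\mathbf{v}>0$ that sub-Grassmannian is actually empty). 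Your closing remark about the alternative route via $c_1^2(L)\geq(2\chi+3\tau)(M)$ is also well judged: because a monopole class only demands a solution of the \emph{unperturbed} equations for every metric, there is no \emph{a priori} reason for solutions to persist under a generic perturbation, so the expected-dimension bound cannot simply be read off; the soft finiteness argument you give is the one that actually works.
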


As a consequence, if we define $\hull \mathfrak{C}\subset H^2(M,\RR)$ to be the convex hull of the monopole classes,
then  $\hull \mathfrak{C}$  is  compact, and necessarily contains  $0$ if $\mathfrak{C}\neq \varnothing$. This allows us to 
define  a  non-negative numerical invariant of any smooth compact oriented $4$-manifold that that provides a transparent 
distillation 
of many  key arguments in the subject:

\begin{definition}
Let $M$ be a smooth compact oriented $4$-manifold
with $b_{+}\geq 2$. If $\mathfrak{C}= \varnothing$, set $\beta (M) =0$. Otherwise, 
let 
$$\beta^2 (M) = \max \{ ~\mathsf{a} \bullet \mathsf{a} ~|~ \mathsf{a}\in \hull \mathfrak{C} \} $$
and set $\beta (M) : = \sqrt{ \beta^2 (M)} \geq 0$. 
\end{definition}

When $\mathfrak{C}\neq \varnothing$,  the compactness of $\hull \mathfrak{C}$  ensures that the above maximum
is actually achieved, while the fact that that $0\in \hull \mathfrak{C}$  guarantees that $\beta^2(M) \geq 0$. 
Here it is important to remember  that, since   the intersection form $\bullet$ is typically  indefinite, the function 
$\mathsf{a}\mapsto \mathsf{a}^2:= \mathsf{a} \bullet \mathsf{a}$ is usually  not   convex,  and that  the maxima of 
$\mathsf{a}^2$ on $\hull \mathfrak{C}$ therefore can (and often  do)    occur
 away from the vertices  $\mathfrak{C}$ of  the convex hull.

Using essentially the same Weitzenb\"ock arguments as before, one now deduces   the following general result \cite{lebeta}:

\begin{proposition}
\label{watch} 
 Let $M$ be a  smooth compact connected  oriented $4$-manifold
with $b_{+}\geq 2$ and $\mathfrak{C}\neq \varnothing$. Then 
\begin{equation}
\label{chai}
\mathcal{I}_s (M) \geq 32\pi^2 \beta^2 (M) \quad \mbox{and} \quad \mathscr{Y}(M) \leq - 4\pi \sqrt{2} \beta (M).
\end{equation}
\end{proposition}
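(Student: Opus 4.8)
The plan is to mimic the proof of Proposition \ref{best}, upgrading it to extract information from the entire convex hull of monopole classes rather than from a single basic class. First I would observe that, since $b_+(M)\geq 2$, every monopole class $\mathbf{a}\in\mathfrak{C}$ is by definition the first Chern class (mod torsion) of a spin$^c$ structure $\mathfrak c$ for which the Seiberg-Witten equations (\ref{drc}--\ref{sd}) admit a solution for \emph{every} Riemannian metric $g$ on $M$. Fix an arbitrary metric $g$. Proposition \ref{best} then applies to each such $\mathfrak c$ and yields
$$\int_M s_g^2\, d\mu_g \;\geq\; 32\pi^2\,[\,c_1^+(\mathfrak c)\,]^2 \;=\; 32\pi^2\, \|\mathbf{a}^+\|^2,$$
where $\mathbf{a}^+\in\mathcal{H}^+_g$ is the $\bullet$-orthogonal projection of $\mathbf{a}$ (equivalently of its harmonic representative) onto the maximal positive-definite subspace, and $\|\cdot\|^2$ denotes the positive-definite form $\bullet$ restricted there. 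So the integral $\int_M s_g^2$ dominates $32\pi^2\|\mathbf{a}^+\|^2$ simultaneously for \emph{all} $\mathbf{a}\in\mathfrak{C}$.

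The key linear-algebra step is to promote this from the finite set $\mathfrak{C}$ to its convex hull. The map $\mathbf{a}\mapsto\mathbf{a}^+$ is linear (orthogonal projection $H^2(M,\RR)\to\mathcal{H}^+_g$), and on the positive-definite space $\mathcal{H}^+_g$ the function $v\mapsto\|v\|^2$ is a genuine convex function; hence its maximum over the polytope $\hull\mathfrak{C}$ is attained at a vertex, i.e.\ at some point of $\mathfrak{C}$. Therefore
$$\max_{\mathsf{a}\in\hull\mathfrak{C}}\|\mathsf{a}^+\|^2 \;=\; \max_{\mathbf{a}\in\mathfrak{C}}\|\mathbf{a}^+\|^2.$$
Now I want the left-hand side to be bounded below by $\beta^2(M)=\max\{\mathsf{a}\bullet\mathsf{a}:\mathsf{a}\in\hull\mathfrak{C}\}$. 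Choose $\mathsf{a}_0\in\hull\mathfrak{C}$ achieving $\mathsf{a}_0\bullet\mathsf{a}_0=\beta^2(M)$; decomposing $\mathsf{a}_0=\mathsf{a}_0^++\mathsf{a}_0^-$ with respect to $\mathcal{H}^+_g\oplus\mathcal{H}^-_g$, and using that $\bullet$ is positive-definite on the first summand and negative-definite on the second, we get
$$\|\mathsf{a}_0^+\|^2 \;=\; \mathsf{a}_0^+\bullet\mathsf{a}_0^+ \;\geq\; \mathsf{a}_0^+\bullet\mathsf{a}_0^+ + \mathsf{a}_0^-\bullet\mathsf{a}_0^- \;=\; \mathsf{a}_0\bullet\mathsf{a}_0 \;=\; \beta^2(M).$$
Combining the three displays, for the fixed arbitrary metric $g$ we obtain $\int_M s_g^2\,d\mu_g\geq 32\pi^2\beta^2(M)$. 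Taking the infimum over $g$ gives $\mathcal{I}_s(M)\geq 32\pi^2\beta^2(M)$, which is the first inequality in \eqref{chai}.

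The second inequality then follows purely formally. If $\beta(M)=0$ there is nothing to prove beyond $\mathscr{Y}(M)\leq\mathscr{Y}(S^4)$, which is \eqref{ceiling}; so assume $\beta(M)>0$, whence $\mathcal{I}_s(M)>0$ and therefore $\mathscr{Y}(M)<0$ by \eqref{consign}. In that regime \eqref{consign} gives $\mathcal{I}_s(M)=|\mathscr{Y}(M)|^{2}$ (the $n=4$ case), so $|\mathscr{Y}(M)|=\sqrt{\mathcal{I}_s(M)}\geq\sqrt{32\pi^2\beta^2(M)}=4\pi\sqrt{2}\,\beta(M)$, i.e.\ $\mathscr{Y}(M)\leq-4\pi\sqrt{2}\,\beta(M)$. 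The only genuinely substantive point is the convexity/vertex observation together with the sign-of-$\bullet$ manipulation in the second paragraph; everything else is a direct citation of Proposition \ref{best} and of \eqref{consign}. I expect the main obstacle to be purely expository: making sure the passage from the vertex set $\mathfrak{C}$ to the hull, and the fact that the relevant quadratic form $\mathsf{a}\mapsto\mathsf{a}^+\bullet\mathsf{a}^+$ really is convex (because $\bullet$ is positive-definite on $\mathcal{H}^+_g$, even though it is indefinite on all of $H^2$), is stated cleanly — this is exactly the subtlety flagged in the remark preceding the proposition about maxima occurring away from the vertices of $\hull\mathfrak{C}$.
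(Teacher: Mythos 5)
Your proof is correct, and it follows essentially the same route the paper indicates: you run Proposition~\ref{best} over all monopole classes and then pass from $\mathfrak{C}$ to $\hull\mathfrak{C}$ using the two observations that $\mathsf{a}\mapsto\mathsf{a}^+\bullet\mathsf{a}^+$ is a positive semi-definite (hence convex) quadratic form whose maximum on a polytope sits at a vertex, and that $\mathsf{a}^+\bullet\mathsf{a}^+\geq\mathsf{a}\bullet\mathsf{a}$ because $\bullet$ is negative semi-definite on $\mathcal{H}^-_g$. This is precisely the mechanism the paper already deploys in the proof sketch of Theorem~\ref{genotype}, where $c_1(X)$ is written as the average of two basic classes and one then uses convexity of $\mathsf{a}\mapsto[\mathsf{a}^+]^2$ together with $[c_1(X)^+]^2\geq c_1^2(X)$. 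Your handling of the second inequality via \eqref{consign} (splitting into $\beta=0$ and $\beta>0$) is also exactly the intended deduction.

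One small expository point worth tightening: you correctly identify the subtlety flagged in the paragraph before the proposition, but it is worth stating explicitly that the remark there concerns $\mathsf{a}\mapsto\mathsf{a}\bullet\mathsf{a}$, which is \emph{not} convex on $H^2(M,\RR)$ when $\bullet$ is indefinite, whereas the function you actually maximize, $\mathsf{a}\mapsto\mathsf{a}^+\bullet\mathsf{a}^+$, \emph{is} convex because it is the precomposition of a positive-definite form by the linear projection to $\mathcal{H}^+_g$. That distinction is the crux, and without it a reader might wrongly object that you have contradicted the preceding warning.
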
 

Here one might object that our soft definition of a monopole class means that we may well not be able to directly determine
the full collection $\mathfrak{C}$ of all monopole classes on a given $4$-manifold. Nonetheless, knowing any  subset of $\mathfrak{C}$ gives
a lower bound for $\beta^2$, while \eqref{chai} gives us  an upper bound for $\beta^2$ 
in terms of  $\inf \mathfrak{S} (g_j)$ for any   sequence of Riemannian metrics
$g_j$ on $M$. By combining these two facts, one can in practice  exactly calculate $\beta$ for
many interesting $4$-manifolds. For example,  using the results of Bauer and Furuta  to 
prove the existence of specfic monopole classes on suitable connected sums, and then 
constructing specific minimizing sequences $g_j$ by pasting together K\"ahler building blocks, one can prove the following  
\cite{il1,il2}:

\begin{theorem}
Let $X$, $Y$, and $Z$ be minimal simply-connected complex surfaces  with $$b_+\equiv 3\bmod 4.$$
Then, for any $k\geq 0$, 
$$\beta^2 (X\# Y\# k \overline{\CP}_2 ) = c_1^2 (X) + c_1^2 (Y)$$
and 
$$\beta^2 (X\# Y \# Z \# k \overline{\CP}_2) = c_1^2 (X) + c_1^2 (Y)+ c_1^2 (Z).$$
Moreover, equality holds in \eqref{chai} for each of these connected sums.
\end{theorem}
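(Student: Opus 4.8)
The plan is to trap $\beta^2(M)$ for $M := X\# Y\# k\overline{\CP}_2$ between a Bauer--Furuta lower bound and an upper bound produced by gluing model metrics, and to show that both bounds equal $c_1^2(X)+c_1^2(Y)$; the triple-summand case will be word-for-word the same. Since $b_+$ is odd, each of $X,Y,Z$ is of K\"ahler type, and being minimal and simply connected with $b_+\equiv 3\bmod 4$ it has (by the Enriques--Kodaira classification) $\kod\in\{0,1,2\}$ and $c_1^2\geq 0$; fixing a compatible K\"ahler metric, the spin$^c$ structure it determines carries a non-trivial stable cohomotopy Seiberg--Witten (Bauer--Furuta) class, since the ordinary invariant of the canonical spin$^c$ structure is already $\pm1$.

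First I would establish the \emph{lower bound} via Bauer's connected-sum formula \cite{baufu,bauer2}, which expresses the Bauer--Furuta class of a connected sum as an equivariant smash product of the classes of the summands. The hypothesis $b_+\equiv 3\bmod 4$ on each factor is exactly what guarantees that this smash product is detected in the relevant equivariant stable cohomotopy group, hence stays non-zero, for two (and still for three) such summands; the extra $\overline{\CP}_2$ factors are incorporated by the elementary blow-up formula. Consequently $\pm\big(c_1(X)+c_1(Y)+\sum_{i=1}^{k}\epsilon_i E_i\big)$ (all $\epsilon_i=\pm1$) are monopole classes of $M$, where $E_i$ is the exceptional class of the $i$-th $\overline{\CP}_2$ summand. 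Averaging over the $\epsilon_i$ puts the class $c_1(X)+c_1(Y)$ (inside $H^2(M,\RR)=H^2(X)\oplus H^2(Y)\oplus\bigoplus_i\RR E_i$) into $\hull\mathfrak{C}$; since $H^2(X)\perp H^2(Y)$, this class has self-intersection $c_1^2(X)+c_1^2(Y)$, so $\beta^2(M)\geq c_1^2(X)+c_1^2(Y)$.

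Next I would establish the \emph{upper bound}, which will simultaneously force equality in \eqref{chai}. A minimal simply-connected complex surface $W$ with $b_+\equiv 3\bmod 4$ satisfies $\mathcal{I}_s(W)=32\pi^2 c_1^2(W)$, and there is a sequence of metrics on $W$ attaining this value in the limit: when $c_1^2(W)=0$ (so $\kod(W)\leq1$) this follows from Theorem \ref{bacalao}, and for $\mathsf{K3}$ directly from the Ricci-flat K\"ahler metric of Yau's theorem; when $\kod(W)=2$ it is the resolution-of-the-canonical-model construction used in the proof of Theorem \ref{genotype}, gluing Kronheimer's gravitational instantons \cite{kron} into the orbifold K\"ahler--Einstein metric. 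Taking such sequences on $X$ and on $Y$, a fixed Burns metric \cite{lpa} on each $\overline{\CP}_2\setminus\{\mathrm{pt}\}$, and gluing all pieces along long thin necks, the standard connected-sum estimates for $\int s^2\,d\mu$ yield metrics $g_j$ on $M$ with $\mathfrak{S}(g_j)\to 32\pi^2(c_1^2(X)+c_1^2(Y))$, so $\mathcal{I}_s(M)\leq 32\pi^2(c_1^2(X)+c_1^2(Y))$. Combining this with Proposition \ref{watch} and the previous paragraph,
$$32\pi^2\big(c_1^2(X)+c_1^2(Y)\big)\ \leq\ 32\pi^2\beta^2(M)\ \leq\ \mathcal{I}_s(M)\ \leq\ 32\pi^2\big(c_1^2(X)+c_1^2(Y)\big),$$
so all are equal: $\beta^2(M)=c_1^2(X)+c_1^2(Y)$ and $\mathcal{I}_s(M)=32\pi^2\beta^2(M)$. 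If $\beta^2(M)>0$ then $\mathscr{Y}(M)=-\sqrt{\mathcal{I}_s(M)}=-4\pi\sqrt2\,\beta(M)$ by \eqref{consign}; if $\beta^2(M)=0$ then $\mathcal{I}_s(M)=0$ forces $\mathscr{Y}(M)=0$; either way equality holds throughout \eqref{chai}. The triple-summand statement is proved identically, the only new input being that Bauer's formula still applies with three factors each having $b_+\equiv 3\bmod 4$.

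The step I expect to be the real obstacle is the equivariant stable-homotopy input behind Bauer's connected-sum formula: one must verify that the smash product of the summands' Bauer--Furuta classes is non-zero, and it is precisely the congruence $b_+\equiv 3\bmod 4$ that makes this work, and that caps the number of admissible summands. By contrast, the gluing construction behind the upper bound, though delicate, is by now routine LeBrun-type analysis; and the decisive structural point is that Proposition \ref{watch} lets the metric side bound $\beta^2$ from above, so we never have to enumerate \emph{all} monopole classes of $M$.
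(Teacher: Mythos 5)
Your proposal is correct and follows essentially the same route that the paper sketches (and that is carried out in detail in \cite{il1,il2}): Bauer's connected-sum theorem plus the blow-up formula to produce the monopole classes $\pm\bigl(c_1(X)+c_1(Y)+\sum_i\epsilon_iE_i\bigr)$, whose average $c_1(X)+c_1(Y)$ lies in $\hull\mathfrak{C}$ and gives the lower bound $\beta^2\geq c_1^2(X)+c_1^2(Y)$, together with a gluing construction of a minimizing sequence (orbifold K\"ahler--Einstein plus gravitational instantons when $\kod=2$, a scalar-flat sequence when $c_1^2=0$, Burns metrics for the $\overline{\CP}_2$ factors) giving $\mathcal{I}_s(M)\leq 32\pi^2(c_1^2(X)+c_1^2(Y))$, which then squeezes $\beta^2$ and forces equality in \eqref{chai}. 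Two small points of precision: (i) the ``long thin necks'' phrase is backwards — to keep the $\int s^2\,d\mu$ contribution of the gluing region small, the transition annuli must be made short and fat (or, better, one uses the asymptotically flat scalar-flat models directly, inserted at a small scale into nearly Euclidean balls, exactly as in the proof of Theorem~\ref{genotype}); and (ii) in the borderline case $\beta(M)=0$ you should invoke the inequality $\mathscr{Y}(M)\leq -4\pi\sqrt{2}\,\beta(M)=0$ from Proposition~\ref{watch} to get $\mathscr{Y}(M)\leq 0$, since $\mathcal{I}_s(M)=0$ by itself only gives $\mathscr{Y}(M)\geq 0$.
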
 

For example, for any positive  integer $m$, the complex surface 
$M_{4m}\subset \CP_3$ of degree $4m$ is simply connected and has $b_+\equiv 3\bmod 4$. Choosing $X$, $Y$, and $Z$ to be 
of this form,  where $X$ is not a quartic, we thus obtain a horde of 
 concrete  simply connected $4$-manifolds  whose   Yamabe invariants are negative and {\sf explicitly calculable}.
 These examples are  significantly  different from any previously discussed here. In particular,  these  smooth manifolds 
never admit symplectic structures,  because 
  an argument first sketched by Witten \cite{witten} 
 shows that $\mathbf{SW}_\mathfrak{c} =0$ for any   spin$^c$ structure $\mathfrak{c}$ of almost-complex type 
   on a connected sum of two $4$-manifolds with $b_+\neq 0$.
 
Seiberg-Witten theory thus reveals  that an astonishing  profusion of compact simply connected topological $4$-manifolds
admit smooth structures for which the Yamabe invariant is negative.  We will now use  other  techniques 
to explore  the cases where the Yamabe invariant is 
 zero or positive.

\section{Dimension Four: the  Yamabe Zero Case}
\label{yzero}

If  $(M^4, J)$ is a compact complex
surface of {\sf K\"ahler type}, 
Theorem \ref{bacalao} says  that the sign of the Yamabe invariant  $\mathscr{Y}(M)$  is completely determined by its Kodaira dimension
$\kod (M,J)$. 
But does this pattern also hold true  when $(M^4, J)$ is of {\sf non-K\"ahler type}, or equivalently,  when $b_1(M)$ is odd? 
Since  every complex surface with $\kod (M)=2$ is of K\"ahler type, 
the  non-K\"ahler case  naturally  breaks up  into  the sub-cases  of $\kod (M)=1$, $\kod (M)=0$,
and $\kod (M)=-\infty$. However, my earlier paper \cite{lky} also settled the non-K\"ahler case when $\kod (M)=0$,
thereby only leaving 
the problem  open for  complex surfaces $(M^4,J)$ with $b_1(M)$ odd and  $\kod (M,J)= 1$ or
$\kod (M,J) =-\infty$.  Meanwhile,  Michael Albanese \cite{alba} more 
recently showed that the simple pattern linking   Kodaira dimension and the Yamabe invariant 
is sometimes violated  for complex surfaces with  $\kod =-\infty$ and  $b_1$  odd. In this section, we will 
clarify his arguments 
by putting them in a somewhat more general setting, and then show that these same ideas
nonetheless  imply  that the original pattern does  hold true for complex surfaces with   $\kod = 1$  and  $b_1$  odd. 

Any complex surface $(M,J)$ of Kodaira dimension $1$ is property elliptic \cite{bpv}, because holomorphic sections
of some some power $K^{\otimes \ell}$ of the canonical line bundle $K=\Lambda^{2,0}$ define a holomorphic map
$M\to C$ onto a smooth connected Riemann surface $C$ such that  any regular fiber is an elliptic\footnote{This inescapable piece
of  traditional terminology unfortunately clashes with most uses of the term ``elliptic'' in geometry and topology, where it is usually reserved for  spaces that are positively curved, rather than flat. The historical origin of this  peculiar usage is that elliptic curves
first arose   in  connection with the ``elliptic integral'' that expresses the  arclength of  a generic ellipse in the plane.}
 curve $E\approx T^2$. By deforming the complex structure, one can then show that such manifolds can 
always  be  obtained by starting with a $4$-orbifold that fibers over  a $2$-orbifold with flat fibers, then 
gluing in gravitation instantons,  and  finally  blowing up.  By a generalization of 
 arguments of   Cheeger-Gromov \cite{cheegro}, 
 this  allows one \cite{lky} to show that these manifolds admit  sequences of
Riemannian  metrics  with $\int s^2 d\mu \searrow 0$. It therefore follows that 
 every properly elliptic complex surface $(M^4, J)$  has 
 $\mathcal{I}_s (M) =0$, and hence $\mathscr{Y}(M)  \geq 0$. 
 To show
that these manifolds have $\mathscr{Y}(M)=0$, it therefore suffices to show that they never admit metrics of
positive scalar curvature. 

In the K\"ahler case, Seiberg-Witten theory rules out the existence of positive-scalar-curvature metrics
on properly elliptic surfaces, so their Yamabe invariants must indeed vanish. In the non-K\"ahler case,
a systematic calculation by Biquard \cite{biq2}  also revealed that many properly elliptic surfaces with $b_1$ odd 
  do still  carry  non-trivial Seiberg-Witten basic classes, thereby making it plausible  that the pattern might be independent of the 
 parity of $b_1$. Unfortunately, however, Biquard's results 
  do not suffice to settle the problem, because there do exist properly elliptic
 surfaces with $b_1$ odd that do not carry Seiberg-Witten basic classes. For example,
 let $C$ be a Riemann surface, let $L\to C$ be a complex line bundle of Chern class $1$, let 
 $L^\times \subset L$ be the complement of the zero section, and let $M= L^\times/\ZZ$, where
 the action of $\ZZ$ on $L^\times$ is generated by, say,  scalar multiplication by $2$. If $C$ has 
genus ${\zap g}\geq 2$, this elliptic surface has $\kod =1$ and $b_1= 2 {\zap g}+1$, but Biquard's 
calculation reveals that its Seiberg-Witten invariants are all zero. Finishing the job  therefore requires 
 a different method for excluding the existence
of positive-scalar-curvature metrics.

%
  
Fortunately, combining the Schoen-Yau and Gromov-Lawson methods 
yields  the  following  useful  generalization of  a theorem of   Albanese \cite{albathes,alba}: 

\begin{proposition} 
\label{expansive}
Let $N^3$ be a smooth compact oriented connected {\sf enlargeable}  $3$-manifold,
and let $X$ be a smooth compact
oriented $4$-manifold that admits a  smooth   submersion $\varpi: X\to S^1$
 with fiber $N$. Let $P$ be any smooth compact oriented $4$-manifold,
 and let $M= X\# P$. Then $\mathscr{Y}(M)\leq 0$. In other words,   $M$ does not admit metrics of
 positive scalar curvature.
\end{proposition}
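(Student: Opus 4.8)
The plan is to combine the Schoen--Yau descent from dimension~$4$ to dimension~$3$ with the Gromov--Lawson non-existence theorem for enlargeable manifolds, in the spirit of the two methods reviewed in~\S\ref{yammer}. Assume, toward a contradiction, that $M = X \# P$ carries a metric $g$ with $s_g > 0$. The bundle projection $\varpi \colon X \to S^1$ determines a primitive class $\alpha \in H^1(X,\ZZ)$ (the pull-back of a generator of $H^1(S^1,\ZZ)$), and pulling $\alpha$ back along the degree-one ``pinch'' map $c \colon M = X \# P \to X$ that collapses the $P$-summand yields a nonzero primitive class $\mathsf{a} = c^\ast\alpha \in H^1(M,\ZZ)$. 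Because $\dim M = 4 \leq 7$, the Schoen--Yau construction recalled above produces a mass-minimizing integral current representing $\mathrm{PD}(\mathsf{a}) \in H_3(M,\ZZ)$ that, by Federer's regularity theorem, is a sum $\sum_i m_i\Sigma_i$ of disjoint smooth closed oriented embedded hypersurfaces $\Sigma_i^3\subset M$ with positive integer multiplicities; and, as explained there, each $\Sigma_i$ (being stable) satisfies~\eqref{james}, so the $\dim\Sigma_i = 3$ case of that discussion shows that every $\Sigma_i$ admits a metric of positive scalar curvature.

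The crux is then to exhibit a component $\Sigma_{i_0}$ that maps onto $N$ with nonzero degree. Since each $\Sigma_i$ is an oriented embedded hypersurface, its normal line bundle is trivial, and since $\Sigma_j\cap\Sigma_i=\varnothing$ for $j\neq i$, we get $\mathsf{a}|_{\Sigma_i} = 0$; hence $\varpi\circ c|_{\Sigma_i}\colon\Sigma_i\to S^1$ is null-homotopic, and $c|_{\Sigma_i}$ therefore lifts to the infinite cyclic cover $\widetilde X\to X$ associated with $\ker(\varpi_\ast\colon\pi_1 X\to\ZZ)$. But $\widetilde X$ is the pull-back of the bundle $X\to S^1$ along $\RR\to S^1$, hence is the trivial product $\widetilde X\cong N\times\RR$; composing the lift with the projection to $N$ produces a smooth map $\bar c\colon\Sigma = \bigsqcup_i\Sigma_i\to N$. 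On the other hand, since $c$ has degree one and $[\Sigma]=\mathrm{PD}(c^\ast\alpha)$, the projection formula gives $c_\ast[\Sigma] = \alpha\cap[X] = [N]\in H_3(X,\ZZ)$, the fiber class, and the Wang exact sequence of $N\to X\to S^1$ shows that the fiber inclusion induces an \emph{injection} $H_3(N,\ZZ)\hookrightarrow H_3(X,\ZZ)$, because the orientation-preserving monodromy acts as the identity on $H_3(N,\ZZ)\cong\ZZ$. It follows that $\bar c_\ast[\Sigma] = [N]$, i.e. $\bar c$ has degree~$1$; writing this as $\sum_i m_i\deg(\bar c|_{\Sigma_i}) = 1$ forces $\deg(\bar c|_{\Sigma_{i_0}})\neq 0$ for at least one index $i_0$.

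It remains only to observe that a closed oriented manifold admitting a map of nonzero degree onto an enlargeable manifold of the same dimension is itself enlargeable --- one pulls the almost-distance-decreasing maps to large spheres back through the given map and an appropriate finite cover, incurring only a fixed Lipschitz factor (here there is no spin issue, since oriented $3$-manifolds are automatically spin). Hence $\Sigma_{i_0}$ is enlargeable, and so, by the Gromov--Lawson theorem, admits no metric of positive scalar curvature --- contradicting the conclusion of the first paragraph. Therefore $M = X \# P$ carries no positive-scalar-curvature metric, and $\mathscr{Y}(M)\leq 0$ by~\eqref{positivity}.

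I expect the real work to lie in the second paragraph: one must ensure that the minimizing hypersurface $\Sigma$, which a priori is known only to be homologous to the fiber $N$, is genuinely \emph{dominated} by $N$ through a nonzero-degree map. The two facts that make this go through are that an embedded oriented hypersurface Poincar\'e dual to an integral $1$-dimensional cohomology class restricts that class to zero on itself --- which permits the lift to the infinite cyclic cover --- and that the infinite cyclic cover of a mapping torus over $S^1$ is the trivial product $N\times\RR$, so that the fiber class survives projection to $N$. One should also double-check the harmless bookkeeping with the multiplicities $m_i$ and with the (automatically trivial) $P$-summand, but none of that presents a genuine difficulty.
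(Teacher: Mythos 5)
Your proof is correct and follows the same overall strategy as the paper's: Schoen--Yau descent to a stable minimal hypersurface $\Sigma_i$ carrying positive scalar curvature, lifting to the infinite cyclic cover $N\times\RR$, projecting to $N$, and invoking Gromov--Lawson enlargeability to derive a contradiction. Where you genuinely diverge is in the step that produces a component $\Sigma_{i_0}$ mapping to $N$ with nonzero degree. The paper does this geometrically: it represents $X$ as a mapping torus of a monodromy $\varphi$ isotoped to have a fixed point $p$, uses the resulting flow-line to build a ``reference circle'' in $M$ transverse to $\Sigma$, and reads off a nonzero intersection number that becomes the degree after projecting $\{p\}\times\RR$. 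You instead do this homologically: the projection formula gives $c_\ast[\Sigma]=[N]_X$, the Wang sequence (with orientation-preserving monodromy) shows the fiber inclusion is injective on $H_3$, and since the covering map $N\times\RR\to X$ factors as ``project to $N$, then include a fiber,'' the total weighted degree of $\bar c$ must be $1$, forcing some summand to be nonzero. You also replace the paper's explicit $S^1$-valued function argument for $\mathsf{a}|_{\Sigma_i}=0$ with the cleaner observation that disjointness plus triviality of the normal bundle kills the restriction. Both routes buy the same conclusion; yours avoids the ad hoc isotopy to a fixed-point monodromy and the transversality perturbation, at the small cost of being slightly more demanding about the identification $\pi_\ast=\iota_\ast\circ\mathfrak{P}_\ast$ that underlies the Wang-sequence step (and of noting that the projection $\mathfrak{P}$ is not deck-equivariant, but that the induced ambiguity in $\bar c$ is by $\varphi^k$, which has degree $1$, so the degree count is unaffected --- a point worth making explicit). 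One minor stylistic caveat: ``$\bar c$ has degree~$1$'' is only a weighted degree statement, as you implicitly acknowledge, since $[\Sigma]=\sum m_i[\Sigma_i]$ carries multiplicities.
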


\begin{proof}
Proceeding by contradiction, let us suppose that $M$ admits a metric $g$ of positive scalar curvature. 
Let ${\zap b}: X\# P\to X$ be the  smooth ``blowing down'' map that collapses $(P-B^4)\subset M$ to a point, and let 
$$f = \varpi\circ {\zap b}: M\to S^1= \mathbf{U}(1)$$
be  the induced projection. The pre-image $f^{-1} (z )$ of a regular value is thus a copy
of $N$ in $M$ whose  homology class $[N] \in H_3(M, \ZZ)\cong H^1 (M, \ZZ)$ is 
non-zero, because $M- N$ is connected.  Now  recall that 
\begin{equation}
\label{doh}
H^1(M, \ZZ) =  C^{\infty} (M, \mathbf{U}(1)) / \exp\,  [2\pi \mathsf{i}\, C^{\infty} (M, \RR)],
\end{equation}
because there is short exact sequence of sheaves of Abelian groups
$$0\to \ZZ\to C^\infty (\underline{\quad}, \RR)  \stackrel{\exp 2\pi \mathsf{i}\cdot }{\longrightarrow}   C^\infty (\underline{\quad}, \mathbf{U}(1)) \to 0,$$
where $C^\infty (\underline{\quad}, \RR)$ is a fine sheaf. We may now represent the homology class
$[N]$ by a mass-minimizing rectifiable current, and recall that, since $M$ has dimension $4< 8$,  this current is then a sum of 
smooth oriented connected hypersurface $\Sigma_1, \ldots , \Sigma_k$ with positive integer multiplicities; in particular, 
\begin{equation}
\label{play} 
[N] = \sum_{i=1}^k n_i [\Sigma_i], \qquad n_i \in \ZZ^+.
\end{equation}
By the Schoen-Yau argument recounted  in \S \ref{yammer}, the $g$-induced conformal class $[h]$ on each hypersurface
$\Sigma_i$ necessarily has positive Yamabe constant. On the other hand, 
each hypersurface $\Sigma_i$ can then be written as $f_i^{-1} (1)$ for some smooth map $f_i: M\to \mathbf{U}(1)$, 
where  $[f_i] \in H^1(M, \ZZ)$ is 
the Poincar\'e dual of $[\Sigma_i]$ and $1\in \mathbf{U}(1)$ is a regular value of $f_i$. It then follows that 
$$\widehat{f} : = \prod_i f_i^{n_i} : M\to  \mathbf{U}(1)$$
represents the Poincar\'e dual of $[\Sigma]$, where the product of course means the point-wise product
of $\mathbf{U}(1)$-valued functions. Since $f$ and $\widehat{f}$ therefore represent the same class in $H^1(M, \ZZ)$, equation 
\eqref{doh} therefore tells  that 
$$f = e^{2\pi \mathsf{i}u}  \widehat{f}$$
for some smooth real valued function $u: M\to \RR$, and we thus obtain an explicit homotopy of $\widehat{f}$ to $f$ by simply setting $f_t = e^{2\pi \mathsf{i}tu}\widehat{f}$ for $t\in [0,1]$.
Since $\widehat{f}$ is constant on each $\Sigma_i$, it  follows that $f|_{\Sigma_i}$ induces the zero homomorphism
$\pi_1 (\Sigma_i) \to \pi_1 (S^1)$, and  the inclusion map ${\zap j}_i: \Sigma_i \hookrightarrow M$ therefore 
lifts to an embedding  $\tilde{\zap j}_i:\Sigma_i\hookrightarrow \widetilde{M}$ of this hypersurface in 
the covering space $\widetilde{M}\to M$ corresponding to  the kernel of $f_* : \pi_1(M) \to \pi_1(S^1)$.

However, we can identify  $X$ with the mapping torus 
$$\taurus_\varphi := 
(N\times \RR) /\Big\langle \, (x, t) \longmapsto ( \varphi (x), t+2\pi ) \, \Big\rangle
$$
of some  diffeomorphism $\varphi: N\to N$ by simply 
 choosing a vector field on $X$ that projects to  $\partial/\partial \theta$ on $S^1$, and  then following the flow. 
Since  the diffeotype of  $\taurus_\varphi\to S^1$   moreover only 
depends on the isotopy class of $\varphi$, we may also assume that $\varphi$
has a fixed-point $p\in N$. The flow-line $\{ p\} \times \RR$ then covers an embedded circle  $S^1 \hookrightarrow X$,
which we may moreover take to avoid the ball where  surgery is to be performed to  construct $M= X\# P$. 
This circle in $X$ then also defines  an embedded circle in $M$, which we will call the {\em reference circle}; 
and by first making a small perturbation, if necessary, we may assume that this reference circle $S^1\hookrightarrow M$
is also transverse to the $\Sigma_i$. 
Since $N$ has intersection $+1$ with the reference circle, equation \eqref{doh} tells us that at least one of the hypersurfaces 
$\Sigma_i$ has non-zero intersection with the reference circle. 
Setting $\Sigma= \Sigma_i$ for some  such $i$, 
 we will  henceforth denote the corresponding inclusions map 
${\zap j}_i $ and its lift $\tilde{\zap j}_i$ by 
 ${\zap j} : \Sigma \hookrightarrow M$  and  $\tilde{\zap j} : \Sigma \hookrightarrow \widetilde{M}$.
 Our mapping torus model of $X$ now gives us a diffeomorphism $\widetilde{M}= (N\times \RR) \# ( \#_{\ell =1}^\infty P)$,
 along with  a blow-down map  $\widehat{\zap b} : \widetilde{M} \to N\times \RR$ that lifts 
 ${\zap b} : {M} \to X$. However,    $\{ p \} \times \RR\subset N \times \RR$ now meets $\widehat{\zap b}\circ \tilde{\zap j} (\Sigma )$ transversely in a set
 whose oriented count 
 exactly computes the homological intersection number $\mathfrak{n}\neq 0$
of our reference circle $S^1$ with $\Sigma$. 
Thus,  if $\mathfrak{P} : N\times \RR \to N$ denotes the first-factor projection, 
the smooth map $\mathfrak{P} \circ  \widehat{\zap b}\circ \tilde{\zap j} : \Sigma \to N$ has  degree $\mathfrak{n}\neq 0$. 
However, since $N$ is enlargeable by hypothesis, this implies \cite{gvln2} 
that  $\Sigma$ is enlargeable, too, and therefore does not admit a metric of positive scalar curvature. But 
we have also already noticed  that $\Sigma = \Sigma_i$ {must} admit a metric $h$ of positive scalar curvature
conformal to the restriction of the putative positive-scalar-curvature metric $g$ on $M$! 
This contradiction therefore  shows that $M$ cannot actually admit such  a metric $g$, and hence  that 
$\mathscr{Y}(M) \leq 0$, as claimed. 
\end{proof}

Using this, one can now prove the following satisfying result, which was discovered in the course
of conversations with Michael Albanese:

\begin{theorem}
\label{ellipsis}
Let  $M$ be the underlying smooth   $4$-manifold of
a compact complex surface $(M^4,J)$  of Kodaira dimension $0$ or  $1$. Then $\mathscr{Y}(M)=0$. 
\end{theorem}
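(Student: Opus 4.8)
The plan is to split the argument by the Kodaira dimension and the parity of $b_1(M)$, so that the only genuinely new case reduces to a direct application of Proposition \ref{expansive}. If $\kod(M,J) = 0$, the equality $\mathscr{Y}(M) = 0$ is already available: the K\"ahler-type case is part of Theorem \ref{bacalao}, and the non-K\"ahler-type case was settled in \cite{lky} (and can also be recovered from Proposition \ref{expansive}, since a Kodaira surface fibers smoothly over $S^1$ with an enlargeable $\mathrm{Nil}$- or flat $3$-manifold fiber). Likewise, if $\kod(M,J) = 1$ and $b_1(M)$ is even, then $(M,J)$ is of K\"ahler type and $\mathscr{Y}(M) = 0$ is again contained in Theorem \ref{bacalao}. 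I would therefore concentrate entirely on the remaining case: $\kod(M,J) = 1$ with $b_1(M)$ odd.

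In this case the inequality $\mathscr{Y}(M) \geq 0$ needs nothing new. As recalled just before Proposition \ref{expansive}, every properly elliptic surface --- whatever the parity of $b_1$ --- is built from a $4$-orbifold fibering over a $2$-orbifold with flat fibers by gluing in ALE gravitational instantons and then blowing up, and a Cheeger--Gromov-type collapsing argument \cite{cheegro,lky} then produces metrics $g_j$ on $M$ with $\int_M s_{g_j}^2 \, d\mu_{g_j} \searrow 0$. Hence $\mathcal{I}_s(M) = 0$, and \eqref{consign} gives $\mathscr{Y}(M) \geq 0$.

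It thus only remains to show that $M$ admits no metric of positive scalar curvature, and this is precisely the job of Proposition \ref{expansive}. The structural input I would invoke is the classical description of non-K\"ahler elliptic surfaces (Kodaira; cf. \cite{bpv} and \cite{lky}): when $b_1(X)$ is odd, the monodromy of the elliptic fibration acts trivially on the first homology of the fiber, so the minimal model $X$ of $M$ is, up to logarithmic transformations performed fiberwise, a principal $T^2$-bundle over the base of the fibration, and after a change of basis of the fiber a trivial circle factor splits off. Consequently $X$ carries a smooth fiber-bundle structure $\varpi\colon X \to S^1$ whose fiber $N^3$ is a closed oriented $3$-manifold obtained by Seifert-type surgeries from a circle bundle over the base orbifold of the elliptic fibration; since $\kod(X) = 1$, that base orbifold is hyperbolic, so $N^3$ is a Seifert $3$-manifold over a hyperbolic $2$-orbifold and is therefore enlargeable in the sense of Gromov--Lawson \cite{gvln2}. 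Because $M$ is diffeomorphic to $X \# k\overline{\CP}_2$ for some $k \geq 0$, Proposition \ref{expansive} --- applied with this $X$, this $N^3$, and $P = k\overline{\CP}_2$ (or $P = S^4$ when $k = 0$) --- yields $\mathscr{Y}(M) \leq 0$. Together with the lower bound, this forces $\mathscr{Y}(M) = 0$.

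The main obstacle is the structural step: one must verify in enough detail that the minimal model $X$ really does fiber smoothly over $S^1$ with an enlargeable fiber. This means identifying the primitive class in $H^1(X,\ZZ)$ responsible for $b_1(X)$ being odd, checking that it is carried by a nowhere-zero closed $1$-form, and checking that the logarithmic transformations turning the principal $T^2$-bundle into $X$ can be arranged so as to preserve this fibration (altering only the $3$-manifold fiber, by Seifert-type surgeries); one also needs the classical fact that $\widetilde{SL_2\RR}$- and $\mathbb{H}^2 \times \RR$-type Seifert $3$-manifolds are enlargeable. The passage to the minimal model, by contrast, costs nothing: blowing $X$ up only changes $M$ by connected sums with $\overline{\CP}_2$, which is exactly the flexibility already built into the statement of Proposition \ref{expansive}.
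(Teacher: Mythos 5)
Your reduction to the case $\kod(M,J)=1$ with $b_1(M)$ odd, and your use of Cheeger--Gromov collapsing to get $\mathcal{I}_s(M)=0$ and hence $\mathscr{Y}(M)\geq 0$, are exactly what the paper does. The difference comes in how you obstruct positive scalar curvature. You propose to apply Proposition~\ref{expansive} \emph{directly} to $M=X\#k\overline{\CP}_2$, by asserting that the minimal model $X$ itself fibers smoothly over $S^1$ with enlargeable Seifert fiber. The paper instead passes to a finite \emph{unramified cover} $\widehat{X}\to X$: one cover resolves the orbifold points (eliminating the multiple fibers), another kills the rotational monodromy so that $\widehat{X}\to\widehat{C}$ becomes a principal $T^2$-bundle, and a further cover plus an $SL(2,\ZZ)$ change of basis reduces the two Chern classes to $(d,0)$, yielding $\widehat{X}\cong N\times S^1$ with $N$ a circle bundle of nonzero degree over a surface of genus $\geq 2$. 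Proposition~\ref{expansive} then obstructs positive scalar curvature on $\widehat{X}\#k'\overline{\CP}_2$, which is a cover of $M$, and the obstruction descends.

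The step you explicitly flag as ``the main obstacle'' is precisely where your proposal falls short of a proof. You need $X$ itself --- with its multiple fibers and with whatever rotational monodromy is present --- to fiber as a bundle over $S^1$ with fiber an enlargeable $3$-manifold. It is not enough that $b_1(X)$ is odd: one must check that the primitive class responsible is a fibered class, and in particular that the logarithmic transformations producing the multiple fibers can be performed within a fixed $S^1$-fibration (i.e., that they twist only the $3$-dimensional fiber and not the circle base). That is a nontrivial structure theorem which you neither prove nor cite. It may well be true --- the example $L^\times/\ZZ$ recorded in the paper does fiber over $S^1$ --- but in general the rotational monodromy and the placement of the multiple fibers obstruct a naive product decomposition, which is exactly why the paper's argument retreats to covers where these obstructions can be killed one at a time. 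Notice that Proposition~\ref{expansive} is phrased to tolerate the extra connected summand $P$ but not an arbitrary fibration-destroying surgery, so the direct application really does hinge on the unverified fibering claim. Either supply that structure theorem (with a reference to Wall's classification and a careful analysis of how log transforms interact with a chosen circle factor), or replace the direct application by the cover argument: since a positive-scalar-curvature metric on $M$ pulls back to one on any finite cover, it suffices to rule it out on the cover $\widehat{X}\#k'\overline{\CP}_2$, where the product form $\widehat{X}\cong N\times S^1$ makes Proposition~\ref{expansive} apply without any further work.
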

\begin{proof}
Because the other   cases  are covered by my earlier paper\cite{lky},  we may  henceforth  assume that
$b_1(M)$ is {odd}
 and  that $\kod (M,J) =1$.  Since these previous results  also showed that $\mathscr{Y}(M) \geq 0$, we also merely need to show
 that $M$ does not admit metrics of positive scalar curvature. If $X$ is the minimal model of $(M,J)$, then, after normalization,  the pluricanonical system 
 defines a holomorphic map $X\to C$ to a smooth connected complex curve $C$.  Because $b_1$ is odd
 by assumption, there cannot \cite{brin-elliptic} be any fibers that are just  unions of rational curves. 
 Thus,  $X$ has Euler characteristic zero, and 
 $X\to C$ has  at worst  multiple fibers. Now equip $C$ with an orbifold structure by giving each point 
 a  weight equal to the multiplicity of the corresponding fiber. Because $\kod = 1$ by assumption, 
 we must have $\chi^{orb}(C) \leq 0$,  because $(M,J)$ would  otherwise \cite[\S 2.7]{FMbook} be a Hopf surface, and thus
 have $\kod = -\infty$. In particular,  $C$ must be a    {\em good orbifold}  in the sense of Thurston \cite{thurston-orb},
 so that  $C= \widehat{C}/\Gamma$, where $\widehat{C}$ is a smooth complex curve of positive genus, 
and where the finite group $\Gamma$ acts  biholomorphically on $\widehat{C}$. 
Pulling 
$X$ back to $\widehat{C}$ then produces an (unramifed)  cover $\widehat{X}\to X$ with a smooth fibration 
$\widehat{X}\to \widehat{C}$ that makes it into a bundle \cite{brin-elliptic,wall-elliptic} of elliptic curves. Since $\kod (\widehat{X}) = 1$,
the base $\widehat{C}$ must have genus $\geq 2$, and, 
 at the price
of perhaps replacing $\widehat{C}$ with an unbranched cover, we can then kill the rotational monodromy \cite{bpv},
and thereby  arrange for   $\widehat{X}\to \widehat{C}$
to just  be a principal $[\mathbf{U}(1)\times \mathbf{U}(1)]$-bundle over $\widehat{C}$. Moreover, 
this cover $\widehat{X}$ must still have 
$b_1$ odd, because if $\widehat{X}$ admitted a K\"ahler metric,
the fiberwise-average of its local push-forwards would then produce a forbidden K\"ahler metric on $X$. 
The Chern classes of the two $\mathbf{U}(1)$-factors must therefore be linearly dependent over $\mathbb{Q}$,
and by again passing to a cover  and then changing basis if necessary, one may arrange for  
exactly one of these Chern classes to be  non-zero. This reduces the problem to 
the case of  $\widehat{X} \approx N\times S^1$, where  $N\to \widehat{C}$ is a circle bundle of non-zero degree 
over a Riemann surface of genus $\geq 2$. However, this $N^3$ is enlargeable, because \cite{gvln2}
it contains a homologically non-trivial  $2$-torus, namely the preimage of any homologically non-trivial $S^1\subset \widehat{C}$. 
Proposition \ref{expansive} therefore says that $\widehat{X}\# k \overline{\CP}_2$ cannot admit positive-scalar-curvature
metrics for any $k$. But since $M$  has a covering space that is precisely of this form, it  follows that 
$M$ cannot admit positive-scalar-curvature
metrics either. Since we also know \cite{lky} that $\mathscr{Y}(M) \geq 0$, it therefore  follows that $\mathscr{Y}(M)=0$, as claimed. 
 \end{proof}

This   immediately implies Theorem \ref{parity}, and so might    give one the false impression 
that  the parity of $b_1$ should have little effect on results like Theorem 
\ref{bacalao}. However, life becomes significantly more complicated when $\kod = -\infty$. When $b_1$ is even, the surfaces of 
Kodaira dimension $-\infty$ are all {\em rational} or {\em ruled}, meaning that they are exactly 
  $\CP_2$,  $\CP_1$-bundles over  Riemann surfaces, and their blow-ups; 
and Theorem 
\ref{bacalao} is  made possible, in part,  by the fact that every manifold on this list 
admits Riemannian metrics of positive scalar curvature. 
By contrast, no complete classification is  currently  available for  the complex surfaces with 
$b_1$ odd and Kodaira dimension $-\infty$, which  are known, for historical reasons, as 
{\sf surfaces of class  {VII}}.  
The most familiar  class-{\sf VII} surfaces  are the (primary) Hopf surfaces $(\CC^2 - \{ 0\})/\ZZ$, which are diffeomorphic to 
$S^3\times S^1$; and since  these and their blow-ups $\approx (S^3\times S^1) \# k\overline{\CP}_2$ obviously
also admit metrics of positive scalar curvature, one might hope for the same pattern to continue  to hold even 
 when $b_1$ is odd. However, Inoue \cite{inoue} discovered families  of minimal 
class-{\sf {VII}}  surfaces that are  topologically  quite  unlike Hopf sufaces,   and  
Michael Albanese \cite{albathes,alba} recently showed that this topological difference has a major impact 
on the Yamabe invariant:

\begin{theorem}[Albanese]
\label{michael} 
Let $X$ be an Inoue surface, in the above sense, and let $M$ be obtained from 
$X$ by blowing up $k\geq 0$ points. Then $\mathscr{Y}(M)=0$. 
\end{theorem}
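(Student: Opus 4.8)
The plan is to sandwich $\mathscr{Y}(M)$ between $\mathscr{Y}(M)\leq 0$ and $\mathscr{Y}(M)\geq 0$, exploiting two complementary features of an Inoue surface $X$: it is a compact solvmanifold, and it is the total space of a smooth fibre bundle over $S^1$ whose fibre is an enlargeable $3$-manifold. Note that an Inoue surface has $b_1=1$ and $b_2=0$, so $b_+(M)=0$; this is precisely why Seiberg-Witten theory is useless here, and why one instead falls back on the Gromov-Lawson and Schoen-Yau circle of ideas already packaged into Proposition \ref{expansive}.

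For the upper bound, I would first produce the fibration $X\to S^1$. Since $X=(\mathbb{H}\times\mathbb{C})/\Gamma$ for a discrete solvable $\Gamma$ acting freely and cocompactly, and since $b_1(X)=1$, a generator of the free part of $H^1(X,\ZZ)$ is represented by a smooth submersion $\varpi\colon X\to S^1$ (concretely, by the descent of $\log(\mathrm{Im}\,w)$ on the $\mathbb{H}$-factor). Running through Inoue's three families, one checks that the fibre $N^3$ is a $3$-torus for the surfaces $S_M$, and a compact quotient of the $3$-dimensional Heisenberg group for the surfaces $S^{(\pm)}$ — in the $S^{(-)}$ case passing through the evident double cover of type $S^{(+)}$, and using that a closed $3$-manifold is enlargeable as soon as some finite cover of it is. In every case $N^3$ is a flat or nilmanifold, equivalently a $T^2$-bundle over $S^1$ containing a homologically non-trivial fibre torus, and hence enlargeable by Gromov-Lawson \cite{gvln2}. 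Feeding this $X$, this $N^3$, and $P=k\overline{\CP}_2$ (or $P=S^4$ when $k=0$) into Proposition \ref{expansive}, we conclude that $M=X\#k\overline{\CP}_2$ admits no metric of positive scalar curvature, i.e.\ $\mathscr{Y}(M)\leq 0$.

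For the lower bound I would invoke a collapsing argument. Being a compact solvmanifold, $X$ carries an $F$-structure of positive rank, hence, by Cheeger-Gromov \cite{cheegro}, a family of metrics $g_j$ collapsing with uniformly bounded curvature; since $\Vol(X,g_j)\to 0$, it follows that $\mathfrak{S}(g_j)=\int_X s_{g_j}^{2}\,d\mu_{g_j}\to 0$, so $\mathcal{I}_s(X)=0$ and therefore $\mathscr{Y}(X)\geq 0$ by \eqref{consign}. Since $\overline{\CP}_2$ is the manifold $\CP_2$ with reversed orientation, and the Yamabe invariant is orientation-insensitive, $\mathscr{Y}(\overline{\CP}_2)=\mathscr{Y}(\CP_2)>0$ because Fubini-Study has positive scalar curvature; iterating Kobayashi's inequality \eqref{kobble} then gives $\mathscr{Y}(M)=\mathscr{Y}(X\#k\overline{\CP}_2)\geq\min\!\big(\mathscr{Y}(X),\mathscr{Y}(\CP_2)\big)\geq 0$. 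Combined with the previous paragraph, this forces $\mathscr{Y}(M)=0$.

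Granting Proposition \ref{expansive} — which already absorbs the minimal-hypersurface and Dirac-operator input — the only genuine work lies in the second paragraph: one must know the topology of the Inoue families precisely enough to exhibit $\varpi$ with connected, oriented, enlargeable fibre, and in particular to dispose of the $S^{(-)}$ case by descent from its type-$S^{(+)}$ double cover. The collapsing construction on the solvmanifold $X$ and the connect-sum bookkeeping via \eqref{kobble} are then routine.
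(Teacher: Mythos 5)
Your argument is correct, and for the upper bound $\mathscr{Y}(M)\leq 0$ it is essentially identical to the paper's: realize $X$ as a mapping torus over $S^1$ with fibre a $3$-torus (for the $S_M$ family) or a Heisenberg nilmanifold (for $S^{(\pm)}$), note that either fibre is enlargeable, and apply Proposition \ref{expansive} to $M = X\# k\overline{\CP}_2$. The one difference lies in how you establish the lower bound $\mathscr{Y}(M)\geq 0$. You collapse only $X$ via the Cheeger--Gromov $F$-structure to conclude $\mathcal{I}_s(X)=0$, hence $\mathscr{Y}(X)\geq 0$ by \eqref{consign}, and then pass to the blow-up by iterating Kobayashi's inequality \eqref{kobble} using $\mathscr{Y}(\overline{\CP}_2) = \mathscr{Y}(\CP_2) > 0$. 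The paper instead works directly on $M$: it takes the collapsing sequence on $X$ and grafts in rescaled copies of the scalar-flat Burns metric on $\overline{\CP}_2 - \{pt\}$, producing metrics on $M$ itself with $\mathfrak{S}\searrow 0$. Both are valid. Your route is arguably lighter, since the connect-sum step is absorbed by a black-box inequality rather than a gluing construction, but the paper's route has the pedagogical advantage of exhibiting an explicit minimizing sequence on the manifold in question and runs parallel to the Burns-metric gluing already used in the proof of Theorem \ref{genotype}. Your remark about handling $S^{(-)}$ via its type-$S^{(+)}$ double cover is a reasonable safety net, though the paper simply asserts (citing Inoue and Albanese) that all three families are mapping tori with the stated fibres, so the detour is not strictly needed if one accepts that classification.
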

\begin{proof} Inoue's examples, which were all constructed as   quotients of a half-space in $\CC^2$, 
are grouped into three families, but each such  complex surface $X$  is 
diffeomorphic \cite{alba,inoue} to the mapping torus $\taurus_\varphi$ of some self-diffeomorphism $\varphi$
of  a $3$-manifold  $N$. For one family, $N$ is  the  $3$-torus $\mathbb{T}^3$,
and $\varphi\in SL(3, \ZZ)$ is an affine map, while  for the others, 
$N\to \mathbb{T}^2$ is a 
non-trivial  circle bundle  over the $2$-torus, and $\varphi$ is an automorphism
of its nilgeometry. 
Since any such $N$ is enlargeable, and since any blow-up $M$ is diffeomorphic to 
$X\# k \overline{\CP}_2$ for some $k\geq 0$, Proposition \ref{expansive} therefore tells us   that $\mathscr{Y}(M)\leq 0$. 
On the other hand, each such $X$ admits an $F$-structure of positive rank, in the sense of 
Cheeger-Gromov \cite{cheegro},  and so admits families  of metrics which  collapse
to zero volume with bounded curvature. Modifying such metrics by inserting scaled-down 
Burns metrics on $\overline{\CP}_2 - \{ pt\}$,
we thus obtain sequences of metrics on any blow-up $M$ with $\mathfrak{S}\searrow 0$. 
This shows that 
 $\mathcal{I}_s(M)=0$, and hence $\mathscr{Y}(M) \geq 0$. Putting these two observations together,
we therefore have $\mathscr{Y}(M) =0$, as claimed. 
\end{proof}

Theorem \ref{michael} at least  implies that all  known class-{\sf VII} surfaces therefore   have non-negative Yamabe invariant, 
and  the  global-spherical-shell conjecture  credibly claims  that no other  class-{\sf VII} surfaces
remain to be discovered. 
See Dloussky and Teleman\cite{dloutele} for 
an  overview of    this classification problem. 

Finally, while Theorem \ref{ellipsis} focused on the notion of
enlargeability because it behaves well under maps of non-zero degree, it is perhaps also worth noting that this 
hypothesis is in fact {\sf optimal}. Indeed, 
 Ricci-flow methods show \cite{bbbpm,lott} that a
smooth compact oriented $3$-manifold $N$ admits a positive-scalar-curvature metric if and only if 
it is a connected sum of spherical space forms $S^3/\Gamma_j$ and/or  copies of $S^2 \times S^1$.
Geometrization goes on to tell us that any
 other $3$-fold either contains  an incompressible $2$-torus or  has an enlargeable connect-summand that 
is a compact $K(\pi_1)$ modeled on one of the  other six Thurston geometries \cite{thur3}. Because 
any $3$-manifold that 
contains an incompressible torus  is also enlargeable, and because 
the connect sum of
an enlargeble manifold with any spin manifold is also enlargeable \cite{gvln2}, it therefore follows that a $3$-manifold $N$ has 
 $Y(N) \leq 0$ iff it is enlargeable. Nonetheless, when  
  $Y(N^3) \leq 0$, the same reasoning  also says that one can 
 detect  this fact using the   results of Schoen-Yau \cite{sy3man}, because
  Agol's proof \cite{Agol} of the virtual Haken conjecture implies that
  any such $N$  has a finite cover that, for any metric, must  contain  a stable minimal 
 surface of genus $\geq 1$.

\section{Dimension Four: the  Yamabe Positive Case}
\label{ypos} 

If  $(M^4,g,J)$ is  a compact K\"ahler-Einstein manifold with negative scalar curvature,
Theorem \ref{optimal} asserts that $g$ actually realizes $\mathscr{Y}(M)$. The same conclusion  also holds 
in the Ricci-flat case, because such an  $M$ is   
finitely covered by either  by  $K3$ or  $\mathbb{T}^4$, and so cannot admit positive-scalar-curvature metrics. 
 Unfortunately, however,  life becomes significantly more complicated in the positive case. 
  Nonetheless, the same phenomenon does still occur \cite{gl1,lcp2} in one interesting case  here: 

\begin{theorem} 
\label{topper} 
The Fubini-Study metric on $\CP_2$ realizes  the Yamabe invariant $\mathscr{Y}(\CP_2)$. In other words, 
$$\mathscr{Y}(\CP_2 ) = 4\pi \sqrt{2 c_1^2(\CP_2)} = 12 \pi \sqrt{2}.$$
Moreover, modulo diffeomorphisms and  rescalings, the Fubini-Study metric is the unique 
Yamabe metric that achieves this  minimax. 
\end{theorem}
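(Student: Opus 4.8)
The plan is to prove the two inequalities $\mathscr{Y}(\CP_2)\ge 12\pi\sqrt2$ and $\mathscr{Y}(\CP_2)\le 12\pi\sqrt2$ separately, and then to extract the uniqueness statement from the equality discussion in the second. The lower bound is immediate: the Fubini--Study metric $g_{FS}$ is K\"ahler--Einstein with $s>0$, so by Obata's theorem it is, modulo rescaling, the unique Yamabe metric in its conformal class (the exceptional case $(S^n,[g_0])$ not arising here), whence $Y(\CP_2,[g_{FS}])=\mathscr{E}(g_{FS})$; and the K\"ahler--Einstein identity $c_1^2=s^2\Vol/32\pi^2$ used in the proof of Theorem~\ref{optimal} gives $\mathscr{E}(\CP_2,g_{FS})=\sqrt{s^2\Vol}=4\pi\sqrt{2\,c_1^2(\CP_2)}=12\pi\sqrt2$. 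So the real content is the reverse inequality, together with rigidity.

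For the upper bound I would use a spin$^c$ Dirac operator whose coupling connection has harmonic curvature --- the option flagged just after the Weitzenb\"ock formula \eqref{wtw}. Since $\CP_2$ is not spin but $w_2\equiv c_1(\CP_2)\bmod 2$, there is a spin$^c$ structure with $c_1(L)=-K_{\CP_2}$, so $c_1(L)^2=c_1^2(\CP_2)=9$, and the Atiyah--Singer theorem gives $\Ind \dir_\theta=\frac{1}{8}(c_1^2(L)-\tau)=1$; hence for \emph{every} Riemannian metric $g$, and for the connection $\theta$ on $L$ whose curvature $F_\theta$ is $g$-harmonic, there is a nonzero $\Phi$ with $\dir_\theta\Phi=0$ --- only the index is used, no transversality or Seiberg--Witten count. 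The decisive feature of $\CP_2$ is that $b_-=0$: the harmonic form $F_\theta$ is then automatically self-dual, $F_\theta=-2\pi i\,\omega$ with $\omega$ the harmonic representative of $c_1(L)$, and $\|\omega\|_{L^2}^2=\int_M\omega\wedge\omega=c_1(L)^2=9$. This last number --- like the Hodge star on $2$-forms and the pointwise norm of a spinor --- is conformally invariant in dimension four.

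Next I would pass to the conformal representative $\widehat g=|\Phi|^{4/3}g\in[g]$; conformal covariance of the spin$^c$ Dirac operator yields a $\widehat g$-harmonic spinor $\widehat\Phi=|\Phi|^{-1}\Phi$ with $|\widehat\Phi|\equiv1$, while leaving $F_\theta^+$ and $\|\omega\|_{L^2}$ unchanged. Integrating the Weitzenb\"ock formula \eqref{wtw} over $M$, discarding $|\nabla_\theta\widehat\Phi|^2\ge0$, and combining $|\sigma(\widehat\Phi)|=\frac{1}{2\sqrt2}$ from \eqref{radial} with the pointwise inequality $\langle\omega,\sigma(\widehat\Phi)\rangle\le|\omega|\,|\sigma(\widehat\Phi)|$ followed by Cauchy--Schwarz in $L^2$, one obtains
$$\frac14\int_M\widehat s\,d\mu_{\widehat g}\ \le\ \sqrt2\,\pi\int_M|\omega|\,d\mu_{\widehat g}\ \le\ 3\sqrt2\,\pi\,\bigl(\Vol_{\widehat g}\bigr)^{1/2},$$
i.e.\ $\mathscr{E}(\widehat g)\le 12\pi\sqrt2=4\pi\sqrt{2\,c_1^2(\CP_2)}$. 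Hence $Y(\CP_2,[g])\le\mathscr{E}(\widehat g)\le 12\pi\sqrt2$ for every $g$, so $\mathscr{Y}(\CP_2)\le 12\pi\sqrt2$, matching the lower bound. For uniqueness I would trace the equality case: it forces $\nabla_\theta\widehat\Phi\equiv0$, $\omega$ pointwise a nonnegative multiple of $\sigma(\widehat\Phi)$, and $|\omega|_{\widehat g}$ constant. A $\nabla_\theta$-parallel unit spinor makes $\sigma(\widehat\Phi)/|\sigma(\widehat\Phi)|$ a parallel orthogonal complex structure, so $\widehat g$ is K\"ahler; feeding $\nabla_\theta\widehat\Phi=0$ and $\dir_\theta\widehat\Phi=0$ back into \eqref{wtw} shows $s$ is constant, and the remaining conditions then identify $\omega$, hence the Ricci form (as $H^1(\CP_2)=0$, $\theta$ is the Chern connection up to gauge), with a constant multiple of the K\"ahler form --- so $\widehat g$ is K\"ahler--Einstein. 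By the Bando--Mabuchi uniqueness theorem on Fano manifolds, $\widehat g$ is then the Fubini--Study metric up to a biholomorphism and a rescaling, and Obata's theorem shows the same for the original Yamabe metric $g$.

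The main obstacle is that the conformal factor $|\Phi|^{4/3}$ degenerates on the nodal set $\{\Phi=0\}$, so $\widehat g$ need not be a genuine metric in $[g]$. One must therefore verify that this set is negligible for the Yamabe functional --- for instance via B\"ar's theorem that the nodal set of a harmonic spinor on a $4$-manifold has Hausdorff codimension $\ge 2$, together with the removability of vanishing-capacity sets for the $L^2_1$ variational problem, or else by a regularization $\widehat g_\varepsilon=(|\Phi|^2+\varepsilon)^{2/3}g$ and a limiting argument --- and one must likewise carry the nodal set through the rigidity step. This regularity bookkeeping, rather than the Weitzenb\"ock estimate itself, is where the real work lies.
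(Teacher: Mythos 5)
Your overall strategy matches the paper's: use the spin$^c$ structure on $\CP_2$ with $c_1(L)=c_1(\CP_2,J)$, exploit $\Ind\dir_\theta=1$ to get a $\theta$-harmonic spinor $\Phi$ for \emph{every} metric and harmonic connection $\theta$, exploit $b_-=0$ so that $\|F_\theta\|_{L^2}^2=36\pi^2$ is determined purely topologically, and squeeze the Weitzenb\"ock formula with Cauchy--Schwarz to get $\mathscr{E}\le 12\pi\sqrt2$ on a suitable conformal representative. Your lower bound via Obata and the K\"ahler--Einstein identity is exactly the paper's. What differs is the conformal normalization: you rescale by $|\Phi|^{4/3}$ to make the harmonic spinor have unit norm, whereas the paper instead observes that the ``perturbed scalar curvature'' $\mathfrak{s}:=s-2\sqrt2\,|F_\theta|$ transforms conformally like a scalar curvature (because harmonic $2$-forms and their $L^2$-norms are conformally invariant in dimension four, while pointwise norms scale by $u^{-2}$), takes $u>0$ to be the lowest eigenfunction of $6\Delta+\mathfrak{s}$ so that $\widehat{\mathfrak{s}}=\lambda u^{-2}$ has constant sign, and then uses the Weitzenb\"ock inequality $0\ge\int\widehat{\mathfrak{s}}|\widehat\Phi|^2$ to force that sign to be nonpositive. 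That gives $\widehat s\le 2\sqrt2|F_\theta|_{\widehat g}$ \emph{pointwise} without ever touching the size of $\Phi$, and Cauchy--Schwarz finishes as in your computation. The uniqueness step also diverges: you appeal to Bando--Mabuchi, which additionally needs that the integrable $J'$ produced in the equality case is biholomorphic to the standard one (Yau's uniqueness of the complex structure on $\CP_2$), whereas the paper stays metric, passing to the Sasaki--Einstein circle bundle over the K\"ahler--Einstein metric and invoking the rigidity case of Bishop--Gromov on $S^5$.

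The nodal set of $\Phi$ is not a bookkeeping nuisance but a genuine hole in your route, and it is precisely what the eigenfunction trick is designed to avoid. The conformal covariance $\widehat\Phi=u^{-3/2}\Phi$ with $\dir_{\widehat g}\widehat\Phi=0$ is only meaningful for a smooth positive $u$, so you cannot literally take $u=|\Phi|^{2/3}$; and if you regularize to $u_\varepsilon=(|\Phi|^2+\varepsilon)^{1/3}$, the spinor $(|\Phi|^2+\varepsilon)^{-1/2}\Phi$ is no longer $\dir_{\widehat g_\varepsilon}$-harmonic, so the Weitzenb\"ock integral identity for $\widehat g_\varepsilon$ does not hold and you would have to estimate error terms concentrating near $\{\Phi=0\}$. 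Even the direct route --- show $u=|\Phi|^{2/3}\in W^{1,2}$ and that the Yamabe energy of the degenerate metric is still an upper bound for $Y([g])$ --- needs Kato-type control of $|\nabla|\Phi||$ plus codimension-$\ge 2$ rectifiability of the nodal set (B\"ar), and then again a careful capacity/removability argument; none of this is free, and it must then be repeated in the equality analysis where one wants a globally parallel spinor. The paper's use of the first eigenfunction of $6\Delta+\mathfrak{s}$ keeps $u$ smooth and strictly positive by Harnack, so $\widehat g$ is a genuine metric, and is thus the cleaner choice; I would recommend adopting it, at which point the rest of your argument closes up.
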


While this was originally discovered \cite{lcp2} using the perturbed Seiberg-Witten equations,
we will instead describe the simpler proof later given in my joint paper with Gursky\cite{gl1}. Observe that, for the 
spin$^c$ structure on $\CP_2$ induced by the usual complex structure,
the spin$^c$ Dirac operator 
$$\dir_\theta : \Gamma (\mathbb{V}_+) \to \Gamma (\mathbb{V}_-)$$
has index $1$, because this is the Todd genus $h^{0,0}-h^{0,1}+h^{0,2}$ of $\CP_2$. Now, for an arbitary
metric $g$ on $\CP_2$, choose the connection $\theta$ so that $\frac{i}{2\pi} F_\theta$
is the unique harmonic $2$-form  representing $c_1(L) = c_1(\CP_2, J)$. Because 
$\CP_2$ has positive-definite intersection form $\bullet$, this harmonic form  is therefore self-dual, 
and  we therefore have 
\begin{equation}
\label{sizemup}
\|  F_\theta\|^2_{L^2} = 4\pi^2 c_1^2(\CP_2) = 36\pi^2.
\end{equation}
However, since $\Ind \dir_\theta > 0$, the twisted spin bundle $\mathbb{V}_+$  must have a smooth section 
$\Phi\not \equiv 0$  with $\dir_\theta\Phi =0$. Applying the Weitzenb\"ock formula \eqref{wtw}, we therefore
have
\begin{eqnarray*}
0 &=& 2\Delta |\Phi |^2 + 4|\nabla_{\theta} \Phi |^2 + 
s |\Phi |^2 + 8 \langle -iF_{\theta}^{+} , \sigma (\Phi ) \rangle\\
&\geq & 2\Delta |\Phi |^2 + (s - 2\sqrt{2} |F_{\theta}|) |\Phi|^2 
\end{eqnarray*}
so that 
\begin{equation}
\label{cool} 
0 \geq \int_M (s_g - 2\sqrt{2} |F_{\theta}|_g) |\Phi|^2d\mu ~,
\end{equation}
with equality only if $\Phi$ is parallel. However, this would then force $\sigma (\Phi )$ to consequently
be a non-zero parallel section of $\Lambda^+$, 
so equality in \eqref{cool} can only happen if $g$ is K\"ahler. 

Let's now see what this tells us about conformal rescalings $\widehat{g} = u^2 g$ of our metric $g$. 
One  key observation   is that {\sl harmonic 2-forms are conformally invariant in dimension four}, 
so our  prescription for $F_\theta$ is  unchanged by a conformal change of metric. 
On the other hand, the point-wise norm of $F_\theta$  is of course  {\em not} conformally invariant; 
instead, its norms with respect to $g$ and  $\widehat{g} = u^2 g$ are related by 
$$|F_\theta|_{\widehat{g}} = u^{-2} |F_\theta|_{g}.$$
Thus, the Yamabe equation 
$$
\widehat{s} = u^{-3}(6\Delta + s) u
$$  
implies that the ``perturbed scalar curvature''
\begin{equation}
\label{kazaam}
\mathfrak{s} : = s - 2\sqrt{2} |F_{\theta}|
\end{equation}
satisfies a perfect analog
$$\widehat{\mathfrak{s}} = u^{-3}(6\Delta + \mathfrak{s}) u$$
of the Yamabe equation. (Here,  of course,  $\mathfrak{s}_g$ has been abbreviated as $\mathfrak{s}$,
while $\widehat{\mathfrak{s}}$ is used as short-hand for $\mathfrak{s}_{\widehat{g}}$.) However, 
if we now let 
$\lambda_{\mathfrak{s}}$ denote the smallest eigenvalue of $6\Delta + \mathfrak{s}$,  then the Rayleigh-quotient characterization of 
$\lambda_\mathfrak{s}$ implies that corresponding eigenspace
is $1$-dimensional, and is spanned by an everywhere-positive function \cite[Theorem 8.38]{giltrud}. Thus,  there
is a  $C^2$ function $u>0$ on  $M$ with 
$$(6\Delta + \mathfrak{s}) u= \lambda_{\mathfrak{s}} u ,$$
and the corresponding rescaled metric $\widehat{g} = u^2 g$ therefore satisfies  $\widehat{\mathfrak{s}}= \lambda_{\mathfrak{s}}u^{-2}$
everywhere. 
In particular, this means that $\widehat{\mathfrak{s}}$ has the same sign at every point! 
But \eqref{cool}, applied to $\widehat{g}$, tells us that this rescaled metric must also satisfy
$$0\geq \int_M \widehat{\mathfrak{s}} \,|\widehat{\Phi}|^2 d\mu_{\widehat{g}}$$
for some section $\widehat{\Phi}\not\equiv 0$ of $\mathbb{V}_+$, with  equality only if $\widehat{g}$ is  K\"ahler. 
Thus, either $\widehat{\mathfrak{s}} < 0$ everwhere, or else $\widehat{\mathfrak{s}} \equiv  0$ and $\widehat{g}$ is K\"ahler. 
In either case, \eqref{kazaam} and the Cauchy-Schwarz inequality  now tell us that 
$$  
\int_M \widehat{s}\, d\mu_{\widehat{g}} \leq 2\sqrt{2} \int_M   |F_{\theta}|_{\widehat{g}} \, d\mu_{\widehat{g}} \leq 2\sqrt{2} 
\left(\int_M |F_{\theta}|^2_{\widehat{g}} \, d\mu_{\widehat{g}}\right)^{1/2} \left( \int_M 1~ d\mu_{\widehat{g}}\right)^{1/2} 
$$
so that 
$$
\mathscr{E} (\widehat{g}) = \frac{\int_M {s}_{\widehat{g}}\, d\mu_{\widehat{g}}}{\sqrt{\Vol (M,{\widehat{g}})}} 
\leq  2\sqrt{2} \| F_{\theta}\|_{L^2}  = 12\pi\sqrt{2}
$$
by \eqref{sizemup}. 
Minimizing $\mathscr{E}$ over the conformal class $[g]$, we consequently have 
\begin{equation}
\label{punch}
{Y} (\CP_2 , [g]) \leq 12\pi\sqrt{2}. 
\end{equation}
Since  any Einstein metric is a Yamabe metric by Obata's theorem, and since 
the Fubini-Study metric is an Einstein metric with $\mathscr{E}= 12\pi\sqrt{2}$, the conformal class of the Fubini-Study 
metric exactly saturates the upper bound \eqref{punch}, and we therefore have   $\mathscr{Y}(\CP_2)= 12\pi\sqrt{2}$, 
as claimed.

If equality  held in  \eqref{punch}, the  constructed conformal metric 
$\widehat{g}$ would have to be   both  a Yamabe minimizer and a K\"ahler metric. 
In particular,  $\widehat{g}$  would necessarily    be a K\"ahler metric of constant positive scalar curvature. 
But since any  constant-scalar-curvature K\"ahler metric has harmonic  Ricci form, and since 
$b_2(\CP_2)=1$, this would force the Ricci form to be a constant times the K\"ahler form, thus making 
$\widehat{g}$ a K\"ahler-Einstein metric of 
positive Einstein constant. In particular, Obata's theorem would then say that $\widehat{g}$
is the {\em unique} Yamabe minimizer in the given conformal class $[g]$. 
Let us now normalize $\widehat{g}$ by a constant rescaling, and thereby give it Ricci curvature $6$ and scalar curvature $24$.
The assumption of equality in 
\eqref{punch} then says that 
the  volume of $(\CP_2, \widehat{g})$ is $\pi^2/2$. However, if we now give the unit  circle bundle $S\to \CP_1$ in 
$ K^{1/3}\cong \mathcal{O}(-1)$  the Riemannian  submersion metric ${\zap h}$ 
for which the standard vertical $\partial/\partial \theta$ vector field is assigned length $1$, 
this construction then yields  \cite[Theorem 9.76]{bes} an Einstein metric on $S\approx S^5$; indeed,  this is  the standard 
 Sasaki-Einstein metric \cite{bg} associated with the normalized K\"ahler-Einstein metric $\widehat{g}$. 
 However, since $(S,{\zap h})$ then has same volume $2\pi (\pi^2/2) = \pi^3$  and
the same Ricci curvature $4$ as the standard metric on $S^5$, it saturates the  Bishop-Gromov inequality \cite{bes,bishop},
and must therefore be isometric to the standard unit $5$-sphere.  Since the $S^1$ orbits  of the Killing field 
$\partial/\partial \theta$ moreover all have the same length,
the group of isometries it generates  must just  be  a diagonally embedded  
$S^1$ in the  maximal torus
$S^1 \times S^1\times S^1$ of $\mathbf{SO}(6)$. The Riemannian submersion $S^5\to S^5/S^1$
is therefore standard, and the submersion  metric on the base 
$S^5/S^1$ 
must  
therefore be isometric to the  usual Fubini-Study metric  on $\CP_2$.  This  shows
  that   equality  holds in    \eqref{punch}  if and only if 
  $[g]$ is    the conformal class of   the Fubini-Study metric, albeit typically pulled back by some self-diffeomorphism of $\CP_2$. 

\bigskip

It is not difficult to generalize this  to other $4$-manifolds with strictly positive   intersection form $\bullet$. Indeed, 
essentially the same argument proves the following \cite{gl1}:

\begin{theorem} 
\label{definitely} 
Let $k\in \{1,2,3\}$, and let 
$\ell$ be any natural number. Then 
$$12\pi \sqrt{2} \leq 
\mathscr{Y}(k{\CP}_2\#  \ell [S^1\times S^3]) \leq 4\pi \sqrt{2k+16}.$$
In particular, these connected sums of copies of ${\CP}_2$ and
$S^1\times S^3$  all 
have Yamabe invariant strictly less than 
$\mathscr{Y}(S^4)=8\pi \sqrt{6}$.
\end{theorem}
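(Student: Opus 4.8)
The plan is to obtain the lower bound from Kobayashi's inequality \eqref{kobble}, and the upper bound by repeating, essentially verbatim, the Weitzenb\"ock-plus-eigenvalue argument used to prove Theorem~\ref{topper}. For the lower bound I would view $M = k\CP_2 \# \ell[S^1\times S^3]$ as an iterated connected sum of $k$ copies of $\CP_2$ and $\ell$ copies of $S^1\times S^3$. We have $\mathscr{Y}(\CP_2) = 12\pi\sqrt2 > 0$ by Theorem~\ref{topper}, while $S^1\times S^3$ carries positive-scalar-curvature metrics, so $\mathscr{Y}(S^1\times S^3)>0$; in fact it is classical that $\mathscr{Y}(S^1\times S^3) = \mathscr{Y}(S^4) = 8\pi\sqrt6$ (attach a thin tube to the round $S^4$ to realize Yamabe constants arbitrarily close to $\mathscr{Y}(S^4)$, and invoke \eqref{ceiling} for the reverse inequality). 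Since every summand has non-negative Yamabe invariant, repeated use of \eqref{kobble} then gives
$$\mathscr{Y}(M) \geq \min\big[\,\mathscr{Y}(\CP_2),\ \mathscr{Y}(S^1\times S^3)\,\big] = 12\pi\sqrt2,$$
because $8\pi\sqrt6 > 12\pi\sqrt2$.

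For the upper bound I would first record the topology: $H^2(M;\ZZ)\cong\ZZ^k$, spanned by the hyperplane classes $H_1,\dots,H_k$ of the $\CP_2$-summands with $H_i\bullet H_j = \delta_{ij}$, so $\bullet$ is positive definite, $b_+(M)=k$, $b_-(M)=0$, $\tau(M)=k$, and $w_2(M)\equiv H_1+\cdots+H_k \bmod 2$. I would then choose the spin$^c$ structure $\mathfrak c$ with $c_1(L) = 3H_1 + H_2 + \cdots + H_k$ (admissible, since all coefficients are odd); this makes $c_1^2(L) = 9 + (k-1) = k+8$ the smallest value compatible with a positive index, and indeed
$$\Ind\dir_\theta = \frac{c_1^2(L)-\tau(M)}{8} = \frac{(k+8)-k}{8} = 1 > 0.$$
Now, for an arbitrary metric $g$, pick $\theta$ so that $\frac{i}{2\pi}F_\theta$ is the harmonic representative of $c_1(L)$; since $b_-(M)=0$ this form is automatically self-dual, whence $F_\theta^+ = F_\theta$ and $\|F_\theta\|^2_{L^2} = 4\pi^2 c_1^2(L) = 4\pi^2(k+8)$, an identity unaffected by conformal rescaling. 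Because $\Ind\dir_\theta>0$, some $\Phi\not\equiv 0$ solves $\dir_\theta\Phi=0$; feeding this into the Weitzenb\"ock formula \eqref{wtw} and the bound \eqref{radial}, and then rescaling conformally by the first eigenfunction of $6\Delta+\mathfrak s$ exactly as in the proof of Theorem~\ref{topper}, produces a conformal metric $\widehat g\in[g]$ with
$$\mathscr{E}(\widehat g)\ \leq\ 2\sqrt2\,\|F_\theta\|_{L^2}\ =\ 4\pi\sqrt{2k+16}.$$
Taking the infimum over $[g]$ and then the supremum over conformal classes yields $\mathscr{Y}(M)\leq 4\pi\sqrt{2k+16}$. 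Finally, $4\pi\sqrt{2k+16} < 4\pi\sqrt{24} = 8\pi\sqrt6 = \mathscr{Y}(S^4)$ precisely when $k\leq 3$, which is the stated hypothesis, so the last assertion follows automatically.

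The main obstacle is really not in this argument at all: all the hard analysis — the Weitzenb\"ock estimate, the conformal invariance of harmonic $2$-forms in dimension four, and the eigenvalue trick forcing $\mathfrak s_{\widehat g}$ to have a single sign — has already been carried out in the proof of Theorem~\ref{topper}. What needs care here is only the bookkeeping: (i) selecting the spin$^c$ structure of minimal $c_1^2(L)$ with positive index (this is exactly what produces the sharp constant $4\pi\sqrt{2k+16}$ rather than something larger), and (ii) checking that the $S^1\times S^3$ summands never interfere — they do not, since the upper-bound argument uses only $b_-(M)=0$, while the lower bound needs only the inequality $\mathscr{Y}(S^1\times S^3)\geq 12\pi\sqrt2$.
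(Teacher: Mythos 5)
Your proposal is correct and takes essentially the same route as the paper: the lower bound via Kobayashi's inequality \eqref{kobble} together with the Kobayashi--Schoen fact $\mathscr{Y}(S^1\times S^3)=\mathscr{Y}(S^4)$, and the upper bound by running the spin$^c$ Weitzenb\"ock--eigenvalue argument of Theorem \ref{topper} for the spin$^c$ structure with $c_1(L)=3H_1+H_2+\cdots+H_k$, which has index $1$ and $c_1^2(L)=k+8$. The only thing the paper adds that you omit is a parenthetical observation that, when $\ell\geq 1$ or $k\geq 2$, equality in \eqref{cool} is impossible because these manifolds are not homotopy-equivalent to rational or ruled surfaces and so admit no K\"ahler metrics; but that remark is not needed for the stated inequalities, so your proof is complete as it stands.
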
 

Here the upper bound is proved by applying the previous argument to a spin$^c$ structure
with  $c_1(L)= (3)$, $(3,1)$, or $(3,1,1)$ relative to a basis for  $H^2(\ZZ) \cong \ZZ^k$ in which
the intersection form $\bullet$  is diagonalized. 
The corresponding spin$^c$ Dirac operator then  again has 
index $1$, 
 so the proof is essentially the same as before, except that  $c_1^2(L) = 8+k$, and that,  when $\ell \geq 1$ or $k\geq 2$, 
equality can  immediately  be ruled out in \eqref{cool}
because these  Yamabe-positive $4$-manifolds are never even   homotopy-equivalent to rational or ruled surfaces, and so
cannot admit K\"ahler metrics by 
Theorem \ref{bacalao}. 
 On the other hand, the lower bound for the Yamabe constant follows from the Kobayashi inequality 
\eqref{kobble}, together with the fact,   discovered independently by Kobayashi \cite{okob} and Schoen \cite{sch},
that $\mathscr{Y}(S^1\times S^{3}) = \mathscr{Y}(S^4)$. 

\medskip 

One surprising consequence  of Theorem \ref{definitely} is the following \cite{gl1}: 

\begin{corollary}
\label{really}
 The $\kod \geq 0$  hypothesis in Theorem \ref{parity} is essential, because 
a primary  Hopf surface $\approx S^1\times S^3$ and its one-point blow-up $\approx [S^1 \times S^3] \# \overline{\CP}_2$
have different Yamabe invariants. 
\end{corollary}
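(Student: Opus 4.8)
\emph{Plan.} The strategy is simply to assemble results already in hand and then compare two numbers. First I would recall that a primary Hopf surface is by definition diffeomorphic to $S^1\times S^3$, so, by the computation of Kobayashi and Schoen quoted after Theorem \ref{definitely}, it has
$$\mathscr{Y}(S^1\times S^3) = \mathscr{Y}(S^4) = 8\pi\sqrt{6}.$$
Thus the Hopf surface itself has Yamabe invariant $8\pi\sqrt{6}$.

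Next I would identify the Yamabe invariant of the blow-up. By the description of the blow-up operation recalled above, the one-point blow-up of the Hopf surface is diffeomorphic to $[S^1\times S^3]\#\overline{\CP}_2$. Since the normalized Einstein--Hilbert functional $\mathscr{E}$ and the notion of a conformal class involve no choice of orientation, the Yamabe invariant $\mathscr{Y}$ depends only on the underlying smooth manifold and is insensitive to orientation; and $\overline{\CP}_2$ is orientation-reversingly diffeomorphic to $\CP_2$, while $S^1\times S^3$ admits an orientation-reversing self-diffeomorphism. Hence $[S^1\times S^3]\#\overline{\CP}_2$ is diffeomorphic, as an unoriented manifold, to $\CP_2\#[S^1\times S^3]$, so the two have the same Yamabe invariant. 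Now I apply Theorem \ref{definitely} with $k=1$ and $\ell=1$: it yields
$$12\pi\sqrt{2}\ \leq\ \mathscr{Y}\big(\CP_2\#[S^1\times S^3]\big)\ \leq\ 4\pi\sqrt{2\cdot 1+16}\ =\ 4\pi\sqrt{18}\ =\ 12\pi\sqrt{2},$$
so in fact $\mathscr{Y}\big([S^1\times S^3]\#\overline{\CP}_2\big)=12\pi\sqrt{2}$ exactly.

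Finally I would compare the two values: since $(12\sqrt{2})^2 = 288 \neq 384 = (8\sqrt{6})^2$, we have $12\pi\sqrt{2}\neq 8\pi\sqrt{6}$ (indeed $12\pi\sqrt 2 < 8\pi\sqrt 6$), so the Hopf surface and its one-point blow-up have distinct Yamabe invariants, with the blow-up carrying the strictly smaller one. The only point requiring any genuine attention — and hence the ``main obstacle,'' such as it is — is the observation that for the particular parameter value $k=1$ the upper bound $4\pi\sqrt{2k+16}$ of Theorem \ref{definitely} collapses onto its universal lower bound $12\pi\sqrt 2$, so that the theorem, which for general $(k,\ell)$ only traps $\mathscr{Y}$ in an interval, here computes it on the nose; the remaining arithmetic check that $12\sqrt 2\neq 8\sqrt 6$ is immediate.
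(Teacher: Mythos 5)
Your proof is correct and follows essentially the same route as the paper: observe $\mathscr{Y}(S^1\times S^3)=\mathscr{Y}(S^4)=8\pi\sqrt6$ via Kobayashi--Schoen, invoke orientation-independence of $\mathscr{Y}$ to pass from $[S^1\times S^3]\#\overline{\CP}_2$ to $\CP_2\#[S^1\times S^3]$, and then note that the $k=1,\ell=1$ case of Theorem~\ref{definitely} has coinciding upper and lower bounds, giving $12\pi\sqrt2<8\pi\sqrt6$. The paper compresses the orientation step into the single phrase ``manifestly independent of orientation,'' but your slightly more explicit unwinding of it (using an orientation-reversing self-diffeomorphism of $S^1\times S^3$) is exactly what that phrase means here.
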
 
Indeed, since  the Yamabe invariant is manifestly independent of orientation,  Theorem \ref{definitely} 
 tells us  that 
$$\mathscr{Y}([S^1 \times S^3] \# \overline{\CP}_2) = 12\sqrt{2}\pi < 8\sqrt{6}\pi = \mathscr{Y}(S^1\times S^3).$$
Of course, this example involves complex surfaces with $b_1$ odd, and so says nothing about
the case where  $b_1$ is even. One peculiarity  is that,  while we could have just as well  used 
the blow-up of a Hopf surface at $2$ or $3$ points as the key exhibit in Corollary \ref{really}, 
the argument would fail if we instead blew up $4$ or more
points.  It is an  open problem to determine whether this limitation of Theorem \ref{definitely} 
is    a  mere technical artifact, or whether it   genuinely  reflects of some    undiscovered geometric 
phenomenon. Settling  this question either way might easily  shed new light
on  other  open problems in the subject.

 For complex surfaces 
 with $b_1$ even and $\kod = -\infty$, the behavior of the Yamabe invariant under blowing up 
 still  remains to be  elucidated, but could turn out to be  rather more satisfying. These complex surfaces are exactly the 
 rational or ruled ones, and their underlying smooth compact  oriented $4$-manifolds are 
 characterized \cite{liu1,ohno} by the positivity of the  Yamabe invariant and the existence of an orientation-compatible symplectic
 structure. Among these, the non-spin, simply connected   ones are just  the $4$-manifolds
 $\CP_2 \# k \overline{\CP}_2$   arising from  blow-ups of $\CP_2$. For these,  the Kobayashi
 inequality \eqref{kobble} 
 immediately tells us that
 $$\mathscr{Y} (\CP_2 \# k \overline{\CP}_2) \geq \mathscr{Y} (\CP_2) = 12\pi \sqrt{2},$$
 so blowing up $\CP_2$ certainly never  decreases the Yamabe invariant; rather,  the open question is 
 whether blowing up could {\em increase} it in this context. While  the method used to 
 prove Theorem  \ref{topper} does not allow us to settle this,  it  nonetheless does provide some interesting partial information. 
 
 Indeed, if $(M,g)$ is a smooth compact oriented Riemannian $4$-manifold with indefinite intersection form $\bullet$,  
 we saw in \eqref{deco-harm} that 
 $$H^2(M,\RR) = \mathcal{H}^+_g \oplus \mathcal{H}^+_g , $$
 where $\mathcal{H}^\pm_g$ consists of cohomology classes whose harmonic representative with respect to $g$ is 
 self-dual (respectively, anti-self-dual). In particular, for any metric $g$ and any spin$^c$ structure $\mathfrak{c}$ on $M$, 
 we thus obtain a cohomology class $c_1^+ = [c_1(L)]^+\in \mathcal{H}^+_g$ as the orthogonal projection of $c_1= c_1(L)$ with respect to 
 $\bullet$. We can then repeat the argument previous by again taking $i F_\theta$ to be the harmonic 
 representative of $2\pi c_1$, but now carefully noting  that $iF_\theta^+$ is now the harmonic representative
 of $2\pi c_1^+$, and that only $F^+_\theta$ appears in \eqref{wtw}. 
 Assuming that the index
 $$\Ind \dir_\theta = \frac{c_1^2(L) - \tau (M)}{8}$$
 is positive, much the same argument then shows that any conformal class $[g]$ on $M$ then satisfies \cite{gl1} 
 \begin{equation}
\label{faible} 
 Y(M, [g]) \leq 4\pi \sqrt{2 (c_1^+)^2}.
\end{equation}
This should have a familiar ring to it, because we previously saw $(c_1^+)^2$ arise in a Seiberg-Witten 
estimate \eqref{amazing} that we used to bound the Yamabe invariant from above. But in the Seiberg-Witten setting, 
we got $-4\pi \sqrt{2 (c_1^+)^2}$ as an upper bound for $Y(M, [g]) $, whereas in \eqref{faible}   this expression 
instead occurs with the opposite sign. However, while 
$$
(c_1^+)^2\geq c_1^2(L)
$$
for any metric $g$, 
this is much less useful in the context of  \eqref{faible} than in the setting of  \eqref{amazing}, because 
$(c_1^+)^2$ will  become larger and larger as $\mathcal{H}^+_g$ tilts further and further   away from $c_1=c_1(L)$,
so that, when $b_-\neq 0$, the upper bound \eqref{faible} will  actually be weaker that the Aubin bound \eqref{aubineq}
for many  conformal classes $[g]$. Of course, one can partially remedy this by considering many different
spin$^c$ structures on $M$, but this does not seem to ever lead to definitive results on the Yamabe invariant. 

Nonetheless, \eqref{faible}  does still have  interesting ramifications for  Yamabe invariants of specific conformal classes. 
For example, on a smooth compact oriented $4$-manifold $M$ with $b_+=1$,  one can fix some specific $[\omega ]\in H^2(M, \RR)$
with $[\omega]^2 > 0$, and  only consider those metrics $g$ for which $\mathcal{H}^+_g = \spanner ~ [\omega ]$. For any
such metric and any spin$^c$ structure $\mathfrak{c}$ for which the Dirac operator has positive index, \eqref{faible} then 
takes the more concrete form 
$$
 Y(M, [g]) \leq 4\pi \,  \frac{|c_1(L) \bullet [\omega]|}{\sqrt{[\omega]^2/2}},
 $$
with equality iff  some  Yamabe minimizer  ${g}^\prime\in [g]$ 
 is a K\"ahler  metric of non-negative-constant scalar curvature that is  compatible with a complex structure $J$ 
for which $c_1(M,J)= c_1(L)$.  In particular, if $g$ is a metric such that $c_1(L) \in \mathcal{H}^+_g$, then 
$$
 Y(M, [g]) \leq 4\pi \sqrt{2 c_1^2(L)} ,$$
 with equality iff the (unique) Yamabe minimizer of $[g]$ is K\"ahler-Einstein. This puts Theorem \ref{topper} in a broader context,
 and should also be compared with Theorems \ref{optimal} and \ref{definitely}.

 Nonetheless, when the intersection form $\bullet$ is indefinite,
  K\"ahler-Einstein metrics with positive Einstein constant never achieve the Yamabe invariant; that is, they
   are never  {\sf supreme}\cite{lebyam} Einstein metrics. Indeed, the non-spin
  $4$-manifolds that carry such metrics \cite{ty,sunspot} are exactly $\CP_2 \# k \overline{\CP}_2$, $k= 3, \ldots , 8$, and
 on any of these the Einstein-Hilbert action of a  K\"ahler-Einstein metric has $\mathscr{E} = 4\pi \sqrt{2 (9-k)} < \mathscr{Y}(\CP_2)$, 
 so the Kobayashi inequality \eqref{kobble} says that one can do better via a sequence that tends toward the Fubini-Study conformal class. 
 Similarly, while  $\CP_2 \# k \overline{\CP}_2$ admits an Einstein metric for $k=1,2$ that is conformally K\"ahler (but not K\"ahler),
 these Einstein metrics have $\mathscr{E} < 4\pi \sqrt{2 (9-k)}$, and so, again, do not achieve  the Yamabe invariant. 
 Indeed, \eqref{faible} shows that any maximizing sequence must involve metrics for which $\mathcal{H}^+\cong \RR$ becomes  
  as far away
 from  $c_1(J)$ as the pull-back $H^2(\CP_2, \RR) \hookrightarrow H^2(\CP_2\# k \overline{\CP}_2, \RR)$. 
 Moreover, since  \cite{gl2,G1} any oriented non-symmetric $4$-dimensional Einstein 
 manifold $(M,g)$  must  have 
  Einstein-Hilbert action $\mathscr{E}(g)\leq 4\pi \sqrt{2(2\chi + 3\tau)(M)}$, the existence of a supreme Einstein metric on 
  $\CP_2 \# k \overline{\CP}_2$ is  {\sl a priori} impossible for any  $k \neq 0$.  
   
The classification of del Pezzo surfaces \cite{delpezzo} 
tells us that there is only one other  $4$-manifold that can admit a  positive-scalar-curvature K\"ahler-Einstein metric, namely 
the spin manifold $S^2 \times S^2= \CP_1\times \CP_1$. Here, the 
Kobayashi inequality \eqref{kobble} tells us nothing at all about the Yamabe invariant, 
but a closer examination of this example allows us to draw similar, albeit weaker, conclusions. Indeed, given a positive real number
$t$,  consider the homogeneous K\"ahler metric $g_t$ on $S^2\times S^2$ given by the 
Riemannian product of two standard  metrics on the $2$-sphere $S^2\subset \RR^3$, one of Euclidean radius $t^{1/2}$ and the other of Euclidean radius $t^{-1/2}$.
This K\"ahler metric is then Einstein iff $t=1$. Our choice of radii  guarantees that  the volume of 
$(S^2\times S^2, g_t)$ is always $16\pi^2$, but that its scalar curvature is $2( t+ t^{-1})\geq 4$. Thus, the Einstein-Hilbert action of 
$(S^2\times S^2, g_t)$  is $\mathscr{E}=8\pi( t+ t^{-1})$, which has a unique minimum of $16\pi$ at $t=1$, 
corresponding to  the K\"ahler-Einstein metric. A beautiful argument of B\"ohm, Wang, and Ziller  \cite[Theorem 5.1]{bwz}
now shows, for $t$ in some interval $(1-\epsilon, 1+\epsilon)$, the metrics $g_t$ must be the unique Yamabe minimizers of volume $16\pi^2$ in 
their conformal classes. Indeed, if this were not the case, we would be able to choose a sequence of   Yamabe minimizers
$\widehat{g}_{t_j} \in [g_{t_j}]$, $\widehat{g}_{t_j} \neq {g}_{t_j} $, of identical  volume, with $t_j\to 1$. A compactness argument then shows that these would
necessarily converge to a Yamabe metric in $[g_1]$, and since Obata's theorem tells us that that $g_1\in [g_1]$ is the unique
Yamabe minimizer of  the specified volume, we therefore have $\widehat{g}_{t_j} \to g_1$. However, an inverse-function-theorem 
argument shows that, for $t$ in some interval $(1-\epsilon, 1+\epsilon)$,  the $g_t$ are, up to constant scale, 
 the only constant-scalar-curvature metrics  in their conformal classes that are close to  $g_1$ in the $C^{2,\alpha}$ topology. 
Thus, the $g_t$ must  
 be (unique) Yamabe minimizers when $t$ is sufficiently close to $1$. In particular,  the Einstein metric
$g_1$ is not supreme, and $\mathscr{Y}(S^2 \times S^2) > 16\pi$. 

This makes it irresistible  to ask precisely which  of the homogeneous metrics $g_t$ are actually Yamabe minimizers. 
However, separation of variables reveals that  the first positive eigenvalue of the Laplacian $\Delta$ of $g_t$ 
is given by $\lambda_1= \min (2t, 2/t)$. This now allows us to show that some  $g_t$ are {\em not} Yamabe minimizers. Indeed, let 
$f$ be a $\lambda_1$-eigenfunction  of unit $L^2$-norm, and let us now compute the Yamabe 
energy of $\widehat{g}= u^2 g$, where  $u=1+\varepsilon f$ for   $\varepsilon$ small. 
We  obtain   
$$
\mathscr{E}((1+\varepsilon f)^2g_t)= \frac{\int [6\varepsilon^2 |\nabla f|^2+ s (1+ \varepsilon f)^2]d\mu}{\sqrt{\int (1+ \varepsilon f)^4 d\mu}}
= sV^{1/2} + 6(\lambda_1 - \frac{s}{3}) \varepsilon^2 V^{-1/2} + O (\varepsilon^3)
$$
where $V=16\pi^2$ is the volume of $g_t$. 
Thus, if $g_t$ is a Yamabe metric, we must have $\lambda_1 \geq \frac{s}{3}$, and our computations of 
$s$ and $\lambda_1$ therefore reveal that  a necessary condition for $g_t$ to be a
Yamabe metric is that  $t\in [\frac{1}{\sqrt{2}}, \sqrt{2}]$. 
Amazingly  enough, however, we also have 
$$\mathscr{E} (g_t) = 8\pi (t+ t^{-1}) \leq 12\pi\sqrt{2} = \mathscr{Y} (\CP_2) \quad \Longleftrightarrow \quad t\in  [\frac{1}{\sqrt{2}}, \sqrt{2}].$$
This suggests a continuity argument to show that $\mathscr{Y}(S^2 \times S^2) \geq \mathscr{Y} (\CP_2)$. Indeed, the set of
$t\in [1, \sqrt{2}]$ for which $g_t$ is a Yamabe minimizer is {\em a priori}  {\sf closed}, because the Yamabe constant is a continuous  function on the space of metrics.  
On the other hand, a straightforward  generalization of the B\"ohm-Wang-Ziller argument shows that the set of $t\in [1, \sqrt{2})$ for which $g_t$ is 
the {\sf unique} Yamabe minimizer is  {\em a priori}  {\sf open}, because the inverse-function-theorem part of the argument 
works as long as $\lambda_1 > \frac{s}{3}$. This reduces the problem to showing that this smaller set is also closed, which sounds
plausible, but  could  be quite difficult.

Finally,  Petean and Ruiz \cite{petean-ruiz} have shown that $\mathscr{Y}(S^2 \times \Sigma_{\zap g}) > \frac{2}{3} \mathscr{Y}(S^4)$, where  
$\Sigma_{\zap g}$ is a Riemann surface of any genus ${\zap g}$;  and the same proof moreover appears to also work for  twisted products. 
In conjunction with \eqref{kobble}, this implies that  the Yamabe invariants of all rational or ruled surfaces actually lie in a comparatively
narrow range.  However, determining   the precise value of the Yamabe invariants for  these manifolds continues to represent the sort of  daunting  challenge that illustrates just how much of this territory still remains  mysterious  and   largely unexplored.

\section*{Acknowledgements} The author would like to thank Michael Albanese  
for many useful conversations. He would also like to thank 
Ian Agol,  Misha Gromov, and Jimmy Petean  for useful comments and suggestions. 
This research was supported in part by a grant (DMS-1906267) from the 
National Science Foundation.

\pagebreak

\printindex 
  
\end{document}